\documentclass[a4paper,11pt]{article}
\usepackage[T1]{fontenc}
\usepackage{rotating}

\usepackage[title]{appendix}
\usepackage{slashbox}
\usepackage{upgreek}
\usepackage{amsmath} 
\usepackage{amsthm}
\usepackage{amsfonts}
\usepackage{mathtools}
\usepackage{amssymb}
\usepackage{enumitem}
\usepackage{gensymb}

\def\x{\mathbf{x}}
\def\y{\mathbf{y}}

\DeclareMathOperator{\Aut}{Aut}

\DeclareMathOperator{\sgn}{sgn}
\DeclareMathOperator{\Circ}{Circ}
\DeclareMathOperator{\Cay}{Cay}
\DeclareMathOperator{\Dih}{Dih}
\DeclareMathOperator{\twist}{\tau}
\DeclareMathOperator{\cart}{\Box}

\usepackage{graphicx}
\usepackage{url}
\usepackage[hyperfootnotes=true,colorlinks=true]{hyperref}
\hypersetup{
    colorlinks=true,
    linkcolor=blue,
    citecolor=magenta,      
    urlcolor=black
}

\usepackage{tikz}
\usetikzlibrary{calc}
\usetikzlibrary{backgrounds}
\usetikzlibrary{intersections}

\usepackage{authblk}
\usepackage{subcaption}
 
\usepackage[text={170mm,254mm}]{geometry}

\theoremstyle{plain}
\newtheorem{theorem}{Theorem}
\newtheorem{lemma}[theorem]{Lemma}
\newtheorem{corollary}[theorem]{Corollary}
\newtheorem{proposition}[theorem]{Proposition}
\newtheorem{conjecture}[theorem]{Conjecture}

\theoremstyle{definition}
\newtheorem{definition}{Definition}

\newtheorem{question}[theorem]{Question}

\newtheorem{example}{Example}

\usepackage{amsfonts}
\usepackage{amsmath}
\usepackage{ifxetex,ifluatex}
\if\ifxetex T\else\ifluatex T\else F\fi\fi T%
  \usepackage{fontspec}
\else
  \usepackage[T1]{fontenc}
  \usepackage[utf8]{inputenc}
  \usepackage{lmodern}
\fi

\usepackage{hyperref}

\title{Vertex and edge orbits in nut graphs}

\author[1,2,3]{Nino Ba\v{s}i\'{c}}
\author[4]{Patrick~W.~Fowler}
\author[1,2,3,5]{Tomaž Pisanski}

\affil[1]{FAMNIT, University of Primorska, Koper, Slovenia}
\affil[2]{IAM, University of Primorska, Koper, Slovenia}
\affil[3]{Institute of Mathematics, Physics and Mechanics, Ljubljana, Slovenia}
\affil[4]{Department of Chemistry, University of Sheffield, Sheffield S3 7HF, UK}
\affil[5]{FMF, University of Ljubljana, Slovenia}

\begin{document}

\maketitle

\begin{abstract}
A \emph{nut graph} is a simple graph for which the adjacency matrix has a single zero eigenvalue such that all non-zero kernel eigenvectors have no zero entry. 
If the isolated vertex is excluded as trivial, nut graphs have seven or more vertices; they are connected, non-bipartite, and have no leaves.
It is shown that a nut graph $G$ always has at least one more edge orbit than it has vertex orbits: $o_e(G) \geq o_v(G) + 1$, with
the obvious corollary that edge transitive nut graphs do not exist. We give infinite familes of vertex-transitive nut graphs with two orbits of edges, and infinite families of nut graphs with two 
orbits of vertices and three of edges.
Several constructions for nut graphs from smaller starting graphs are known: double subdivision of a bridge, four-fold subdivision of an edge, 
a construction for extrusion of a vertex with preservation of the degree sequence. To these we add multiplier constructions that yield nut graphs from regular (not necessarily nut graph) parents.
In general, constructions can have different effects on automorphism group and counts of vertex and edge orbits, but in the
case where the automorphism group is `preserved', they can be used in a predictable way to control vertex and edge orbit numbers.

\vspace{\baselineskip}
\noindent
\textbf{Keywords:} Nut graph, core graph, nullity, vertex-transitive graph, edge-transitive graph, circulant, graph spectra,
graph automorphism, vertex orbit, edge orbit, Rose Window graphs, GRR nut graph, non-Cayley nut graph.

\vspace{\baselineskip}
\noindent
\textbf{Math.\ Subj.\ Class.\ (2020):} 
05C50, 
05C25, 
05C75, 
05C92 
\end{abstract}

\section{Introduction}

The main goal of the present paper is to find limitations on the numbers of orbits of vertices and edges
of nut graphs under the action of the full automorphism group, and in particular to show that every nut 
graph has more than one orbit of edges.
To substantiate this claim, we require some standard definitions.
All graphs considered in this paper are simple and connected.
By $\delta(G)$, $d(G)$ and $\Delta(G)$ we denote the minimum, average and maximum degrees of a vertex in graph $G$ 
(see \cite[Section~1.2]{Diestel2018}).
The adjacency matrix of graph $G$ is $\mathbf{A}(G)$
and the dimension of the nullspace of $\mathbf{A}(G)$
is the \emph{nullity},  $\eta(G)$.
Let $\Phi(M; \lambda)$ denote the characteristic polynomial of square matrix $M$.
The characteristic polynomial of graph $G$, denoted $\Phi(G; \lambda)$, is the characteristic
polynomial of its adjacency matrix, i.e.\ $\Phi(G; \lambda) = \Phi(\mathbf{A}(G); \lambda) = \det(\mathbf{A}(G) - \lambda \mathbf{I})$.
The spectrum of graph $G$ will be denoted $\sigma(G)$.
For a graph $G$ of order $n$, we  take $V(G) = \{1, 2, \ldots, n\}$.
The neighbourhood of a vertex $v$ in graph $G$ is denoted $N_G(v)$;
where the graph $G$ is clear from the context then we can simply write $N(v)$.
For other standard definitions we refer the reader to one of 
the many comprehensive treatments of graph spectra and related concepts (e.g.~\cite{Cvetkovic1997,Cvetkovic2010,Chung1997,Cvetkovic1995,haemers}).

\emph{Nut graphs} \cite{ScirihaGutman-NutExt} are graphs that have a one-dimensional nullspace (i.e., $\eta(G) = 1$), 
where the non-trivial kernel eigenvector $\x = [x_1\ \ldots\ x_n]^\intercal \in \ker \mathbf{A}(G)$ is full (i.e., $|x_i| > 0$ for all $i = 1, \ldots, n$). 
Nut graphs are connected, non-bipartite and have no
leaves (i.e.\ $\delta(G) \geq 2$ for every nut graph~$G$)~\cite{ScirihaGutman-NutExt}. 
As the defining paper considered the isolated vertex 
to be  a trivial case~\cite{ScirihaGutman-NutExt}, the \emph{non-trivial}
nut graphs have seven or more vertices. Nut graphs of small order have been enumerated 
(see e.g.~\cite{hog,HoG2} and \cite{CoolFowlGoed-2017}).
If $G$ is a \emph{regular} nut graph, then $\delta(G) = d(G) = \Delta(G) \geq 3$.
Note that there are no nut graphs with $\Delta(G) = 2$, as it is known that cycles are not nut graphs.
The case of $\Delta(G) = 3$ is of interest in chemical applications of graph theory, as a \emph{chemical graph} 
is a connected  graph with maximum degree at most three.
Chemical aspects of nut graphs are treated in \cite{nuttybook}.
The nut graph is a special case of the \emph{core graph}: 
a core graph is a graph with  $\eta(G)\ge 1$ for which it is possible to construct 
a kernel eigenvector in which all vertices of $G$ carry a non-zero entry.
Hence, a nut graph is a core graph of nullity one. Again,
$K_1$ is presumably a trival core graph in the standard definition.
Notice that a core graph may be bipartite or not, whereas a nut graph is not bipartite.

Let $G$ and $H$ be simple graphs. The cartesian product of $G$ and $H$, denoted $G \cart H$, is the graph with the vertex set
$\{ (u, v) \mid u \in V(G) \text{ and } v \in V(H) \}$ and the edge set $\{ (u, v)(u', v') \mid (uu' \in E(G) \text{ and }v = v') \text{ or }
(u = u' \text{ and } vv' \in E(H))  \}$. For further details on graph products see \cite{klavzar2011book}.

An \emph{automorphism} $\alpha$ of a graph $G$ is a permutation $\alpha\colon V(G) \to V(G)$ of the vertices
of $G$ that maps edges to edges and non-edges to non-edges.
The set of all automorphisms of a graph $G$ forms a group,
the (\emph{full}) \emph{automorphism group} of $G$, denoted by $\Aut(G)$.
The image of a vertex $v \in V(G)$ under automorphism $\alpha$ will be denoted $v^\alpha$.
Let $u, v \in V(G)$. If there is an automorphism $\alpha \in \Aut(G)$, such that $u^\alpha = v$,
vertices $u$ and $v$ belong to the same vertex orbit. This relation partitions the vertex set $V(G)$ into $o_v(G)$ vertex orbits.
Let $\{u_1,u_2\}, \{v_1,v_2\} \in E(G)$. If there is an automorphism $\alpha \in \Aut(G)$, such that 
$\{u_1^\alpha, u_2^\alpha\} = \{v_1, v_2\}$, then edges $u_1u_2$ and $v_1v_2$ belong to the same edge orbit.
This relation partitions the edge set $E(G)$ into $o_e(G)$ edge orbits.
See Figure~\ref{fig:orbit_count} for examples.
If $o_v(G) = 1$ (i.e.\ all vertices belong to the same vertex orbit) then the graph $G$ is said to be \emph{vertex transitive}.
Likewise, if $o_e(G) = 1$, then the graph $G$ is said to be \emph{edge transitive}.
A well known class of vertex-transitive graphs is the \emph{circulant graphs} \cite[Section 1.5]{Godsil2001}.
By $\Circ(\mathbb{Z}_n, S)$, where $S \subseteq \mathbb{Z}_n$, we denote the graph on 
the vertex set $V(G) = \mathbb{Z}_n$, where vertices $u, v \in V(G)$ are adjacent if and only if $| u - v | \in S$.
Let $\mathcal{G}$ be a subgroup of $\Aut(G)$.
The stabiliser of a vertex $v$ in $G$, denoted $\mathcal{G}_v$, is the subgroup of $\mathcal{G}$ that contains all elements $\alpha$ such that $v^\alpha = v$.
For other standard definitions from algebraic graph theory, we refer the reader to 
textbooks, e.g.~\cite{Godsil2001,Dobson2022,Biggs}. 

\begin{example}
There are three non-isomorphic nut graphs on $7$ vertices. We denote them $S_1, S_2$ and $S_3$ and call them the
Sciriha graphs. For each Sciriha graph the numbers of vertex and edge orbits and the order of the
(full) automorphism group are given; see Figure~\ref{fig:orbit_count}. \hfill$\diamond$
\begin{figure}[!htb]
\centering
\subcaptionbox{\label{subfig:1a}$\Omega(S_1) = (4, 5, 4)$}
{ \includegraphics[scale=1]{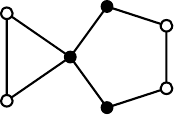} ~
\vspace{2em} }
 \qquad%
\subcaptionbox{\label{subfig:1b}$\Omega(S_2) = (4, 6, 4)$}
{ \quad \includegraphics[scale=1]{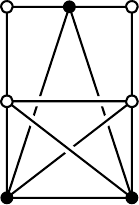} \quad}
 \qquad%
\subcaptionbox{\label{subfig:1c}$\Omega(S_3) = (3, 4, 6)$}
{ \includegraphics[scale=0.9]{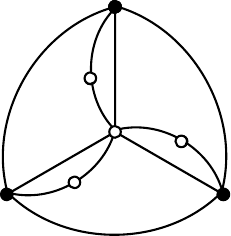} }
\caption{Vertex and edge orbits in the Sciriha graphs (i.e.\ the three nut graphs on $7$ vertices). For brevity, we denote
the triple $(o_v(G), o_e(G), |\mathrm{Aut}(G)|)$ by $\Omega(G)$. Vertices are coloured white or black to indicate equal and 
opposite entries in the kernel eigenvector.
Note that for the Sciriha graphs, the kernel eigenvector is totally symmetric in each case (i.e.\ its trace is $1$ for every automorphism).
}
\label{fig:orbit_count}
\end{figure}
\end{example}

\subsection{Natural questions about nut graphs}

Nut graphs are found within several well known graph classes, such as \emph{fullerenes}, \emph{cubic polyhedra}  \cite{feuerpaper} and
more general
\emph{regular graphs} \cite{Jan2020,basic2021regular}. 
Nut graphs within these classes tend to have low
symmetry, but attention has also been paid to finding nut graphs with high
symmetry (in the sense of having a small number of vertex orbits).
Recently, those pairs $(n, d)$ 
for which a $d$-regular nut circulant of order $n$ exists have
been characterised
in a series of papers~\cite{basic2021regular,DamnjanStevan2022,Damnjanovic2022a,Damnjanovic2022b,Damnjanovic2022c}. 
It is known that there are infinitely many vertex-transitive nut graphs~\cite{Jan2020}. 
It seems natural, therefore, to consider the possibility of
edge-transitive nut graphs.
(Recall that if $G$ is edge transitive, this does \emph{not} imply that $G$ is vertex transitive, \emph{nor} does it imply that $G$ is regular. 
However, an edge transitive connected graph has at most two vertex orbits.)

To give some background for our question, a preliminary  computer search based on the census of connected edge-transitive graphs on orders $n \leq 47$ \cite{Conder2019,marstonCensus} was conducted. It found no examples of nut graphs.
The census contains $1894$ graphs in total. Of these, $335$ graphs are non-singular and $2$ graphs have nullity $1$ (these graphs are $K_1$ and $P_3$).
In the census, there is at least one graph for every admissible nullity (i.e.\ for each
 $0 \leq k \leq 45$, there exists a graph $G$ with $\eta(G) = k$). There are $1312$ core graphs in the census (not counting $K_1$).
Amongst these, there are $1098$ bipartite graphs ($945$ non-regular, $25$ regular non-vertex-transitive, and
$128$ vertex-transitive graphs). The remaining $214$ non-bipartite edge-transitive core graphs are necessarily
vertex-transitive, but none of these are nut graphs. 
On this basis, it seems plausible to question whether edge-transitive nut graphs exist.
This prompted us to look for the general relationship between the numbers of vertex and edge orbits in nut graphs
that is proved in the next section.
In later sections, special attention is paid to nut graphs with one and two vertex orbits and the minimum number of edge orbits (respectively, two and three). Finally, the implications of constructions for the symmetry properties of nut graphs are investigated; a useful byproduct
is a simple proof that there exist infinitely many nut graphs for each even number of vertex orbits and any number of edge orbits
allowed by the main theorem.

\section{A relation between numbers of vertex and edge orbits}
\label{sec:generalResult}

The main result is embodied in the following theorem.

\begin{theorem}
\label{thm:norb}
Let $G$ be a nut graph. Then $o_e(G) \geq o_v(G) + 1$.
\end{theorem}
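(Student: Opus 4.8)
The plan is to exploit the one--dimensional kernel and the action of $\Aut(G)$ on it. Since $\ker\mathbf{A}(G)$ is spanned by a single full vector $\x$, every permutation matrix $P_\alpha$ ($\alpha\in\Aut(G)$) commutes with $\mathbf{A}(G)$ and is orthogonal, so there is a homomorphism $\chi\colon\Aut(G)\to\{\pm1\}$ with $P_\alpha\x=\chi(\alpha)\x$. I would first record the elementary consequences: writing $s(v)=\sgn(x_v)$, the absolute value $|x_v|$ is constant on each vertex orbit, and $s(v)$ is constant on each vertex orbit up to the global twist by $\chi$, so for every edge orbit $j$ the product of the signs of its two endpoints is a well--defined number $\varepsilon_j\in\{\pm1\}$; and since $x_v\neq0$, the character $\chi$ is trivial on every vertex stabiliser. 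The latter lets one define, for each vertex orbit $O_i$ with chosen representative $v_i$, a vector $g_i$ supported on $O_i$ with $(g_i)_{v_i^{\beta}}=\chi(\beta)$; the mutually orthogonal vectors $g_1,\dots,g_{o_v(G)}$ span the subspace $W=\{\mathbf y : P_\alpha\mathbf y=\chi(\alpha)\mathbf y\text{ for all }\alpha\}$, and hence $\dim W=o_v(G)$. (Fullness of $\x$ is exactly what is used here.)

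Next, $W$ is $\mathbf{A}(G)$--invariant and $\ker(\mathbf{A}(G)|_W)=\ker\mathbf{A}(G)$ because $\x\in W$, so $\mathbf{A}(G)|_W$ has nullity exactly $1$; in an orthonormal rescaling of $g_1,\dots,g_{o_v(G)}$ it is represented by a symmetric matrix $M$ of order $o_v(G)$ and rank $o_v(G)-1$, and $M\mathbf w=0$ where $\mathbf w$, the coordinate vector of $\x$, has all entries nonzero (these coordinates are nonzero precisely because $\x$ is full). A short computation identifies $M_{\ell i}$ as a nonzero scalar multiple of $\sum_{j}\varepsilon_j c_{\ell j}$, the sum over the edge orbits $j$ joining $O_\ell$ and $O_i$, where $c_{\ell j}$ is the number of edges of orbit $j$ incident to a vertex of $O_\ell$; in particular $M_{\ell i}=0$ unless some edge orbit joins $O_\ell$ and $O_i$, and $M_{\ell i}\neq0$ whenever exactly one does. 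It is convenient to record this data in the quotient multigraph $H$ whose vertices are the vertex orbits and whose edges (loops allowed) are the edge orbits: $H$ is connected, with $o_v(G)$ vertices and $o_e(G)$ edges, so already $o_e(G)\ge o_v(G)-1$, and the support of $M$ sits inside $H$.

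The core of the proof is to assume $o_e(G)\le o_v(G)$ and derive a contradiction; then $H$ is a tree ($o_e=o_v-1$) or unicyclic ($o_e=o_v$). If $H$ has a vertex $O_\ell$ of degree $1$, its single incident edge orbit is a non-loop orbit to some $O_v$, so $M_{\ell\ell}=0$ and $M_{\ell k}=0$ for $k\neq v$ while $M_{\ell v}\neq0$; then row $\ell$ of $M\mathbf w=0$ forces the nonzero quantity $w_v$ to vanish --- impossible. A tree on $\ge2$ vertices has such a leaf. A unicyclic $H$ on $\ge2$ vertices either has a leaf, or has minimum degree $\ge2$ and, since $\sum_{v}\deg_H(v)=2o_e(G)=2o_v(G)$, is $2$--regular, i.e.\ a single cycle $C_{o_v(G)}$ (a digon when $o_v(G)=2$); and $o_v(G)=1$ forces $o_e(G)=1$ with $H$ a single loop. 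It then remains to contradict $\operatorname{rank}M=o_v(G)-1$ in these three configurations: for a single loop, $M=[t]$ with $t\neq0$ (one nonzero edge-orbit contribution), so its rank is $1$, not $0$; for a digon, $M$ is $2\times2$, symmetric, with zero diagonal, so its rank is $0$ or $2$, never $1$; and for $H=C_{o_v(G)}$ with $o_v(G)\ge3$, $M$ is a symmetric weighted cycle adjacency matrix with nonzero edge-weights and zero diagonal, and unwinding the kernel recursion $w_{i+1}=-\tfrac{t_{i-1}}{t_i}\,w_{i-1}$ around the cycle shows its nullity is $0$ or $2$, never $1$. Each case contradicts $\operatorname{rank}M=o_v(G)-1$, so $o_e(G)\ge o_v(G)+1$.

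I expect two steps to require care. The first is making the symmetry--adapted picture rigorous: the character $\chi$, the subspace $W$, and in particular the equality $\dim W=o_v(G)$, which would fail without fullness of $\x$ (for a general eigenvector some vertex orbits contribute nothing to $W$). The second, and the real content behind the ``$+1$'', is the linear-algebra fact that a symmetric weighted cycle adjacency matrix with nonzero weights has even nullity: the bound $o_e(G)\ge o_v(G)-1$ is mere connectivity, excluding trees needs only the leaf argument, and it is precisely the parity of the nullity of a cycle matrix that also rules out $o_e(G)=o_v(G)$.
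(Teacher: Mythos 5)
Your proof is correct, but it takes a genuinely different route from the paper's. The paper works with edge \emph{types} (unordered pairs of vertex orbits) and the simple vertex-orbit graph $\mathfrak{G}(G)$: connectedness gives $o_e \geq o_v - 1$, non-bipartiteness forces an extra intra-orbit type or an odd cycle in $\mathfrak{G}(G)$, and the final $+1$ comes from showing that some edge type must split into at least two edge orbits --- via a sign-reversing automorphism obtained from the vanishing of orbit sums at a leaf orbit (Lemma~\ref{lem:allzerolemma}), the observation that the local condition forces both ``like'' and ``unlike'' edges of that type to occur, and a $2$-colouring parity argument when $\mathfrak{G}(G)$ is an odd cycle. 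You instead work with edge \emph{orbits} directly, as a quotient multigraph $H$, and package everything into the symmetry-adapted matrix $M$ of $\mathbf{A}(G)$ restricted to the $\chi$-isotypic subspace $W$; your sign $\varepsilon_j$ is exactly the paper's like/unlike dichotomy, and your odd-cycle nonsingularity is the content of the paper's Lemma~\ref{lem:detLem} (used there for Proposition~\ref{prop:8}). What your version buys is uniformity and economy: fullness of $\x$ enters exactly once (to get $\dim W = o_v(G)$ and all $w_i \neq 0$), you never need to exhibit a sign-reversing automorphism or invoke non-bipartiteness, and an edge type that already comprises several orbits is handled automatically as parallel edges of $H$ raising degrees, rather than by the paper's ``two or more added types already give the bound'' reduction. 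What the paper's version buys is reusable structural by-products (orbit sums vanish, orbits have even size, a sign-reversing automorphism exists). Two small points to make explicit when writing yours up: a loop of $H$ contributes $2$ to the degree, so a degree-$1$ vertex of $H$ really does have a single non-loop incident edge orbit and a zero diagonal entry in its row of $M$; and in the odd-cycle case the unwound recursion gives $w_1 = -w_1$, i.e.\ nullity $0$, consistent with (and slightly sharper than) your stated ``$0$ or $2$''.
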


\noindent
This theorem immediately implies the following corollary.

\begin{corollary}
\label{thm:main}
Let $G$ be a nut graph. Then $G$ is not edge transitive.
\end{corollary}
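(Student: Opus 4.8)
The plan is to exploit the defining feature of a nut graph: there is a unique (up to scaling) kernel eigenvector $\x$, and it has no zero entry. The starting observation is that because $\Aut(G)$ fixes the one-dimensional kernel of $\mathbf{A}(G)$ setwise, each automorphism $\alpha$ acts on $\x$ by a scalar $\sgn(\alpha) \in \{+1,-1\}$; that is, $x_{v^\alpha} = \sgn(\alpha)\, x_v$ for all $v$. (Over the reals the scalar must be $\pm 1$ because $\alpha$ is an orthogonal permutation matrix, so it preserves $\|\x\|$.) This gives a homomorphism $\sgn\colon \Aut(G)\to\{\pm 1\}$ and, crucially, for any edge $uv$ the product $x_u x_v$ is mapped to $x_{u^\alpha} x_{v^\alpha} = \sgn(\alpha)^2 x_u x_v = x_u x_v$. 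Hence the quantity $x_u x_v$ is constant on each edge orbit, while $|x_u|$ is constant on each vertex orbit.

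Next I would bring in the eigenvalue equation. Summing the relation $\sum_{v\in N(u)} x_v = 0$ over all vertices $u$ in a single vertex orbit $O$, and then over all orbits, one can rewrite everything in terms of the edge orbits: each edge $uv$ contributes $x_u + x_v$ weighted by how it meets $O$. The cleanest route is to pick one representative value per orbit. Let the distinct vertex orbits be $O_1,\dots,O_{o_v}$ with representative kernel values $a_1,\dots,a_{o_v}$ (so $a_i = \pm|x|$ on $O_i$, well defined up to the global sign ambiguity one fixes once and for all using $\sgn$), and let the edge orbits be $E_1,\dots,E_{o_e}$. For each vertex orbit $O_i$, the eigenvalue equation at a representative vertex, after grouping its incident edges by edge orbit, becomes a linear equation $\sum_{j} c_{ij}\, b_j = 0$ where $b_j$ is the kernel value at the ``other end'' of an edge in $E_j$ and $c_{ij}$ counts incidences. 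The key point is that the vector of kernel values, being full (all entries nonzero), yields a nontrivial solution of a homogeneous linear system whose unknowns are indexed by the orbits.

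The counting argument I would then set up is: the system of $o_v(G)$ homogeneous linear equations in the ``orbit variables'' must have a solution that is nowhere zero; but a homogeneous system that has a strictly positive-in-absolute-value solution and involves $k$ variables can have at most $k-1$ independent equations only if the equations are suitably dependent — more precisely, I want to show the $o_v(G)$ equations coming from the vertex orbits, expressed in the $o_e(G)$ edge-orbit variables, have rank at most $o_e(G)-1$, forcing $o_e(G) \ge o_v(G)+1$. The slickest way to get the strict inequality (rather than $o_e \ge o_v$) is to produce one extra linear relation among the edge-orbit variables that is automatically satisfied, e.g.\ a ``global'' relation such as $\sum_{\text{edges } uv} (x_u x_v) \cdot (\text{something}) = 0$, or to use that $\x$ is orthogonal to the all-ones vector is \emph{false} in general — so instead I would look for the extra relation in the fact that the sign homomorphism $\sgn$ is either trivial or has an index-two kernel, and in either case an averaging argument over $\Aut(G)$ kills one more degree of freedom. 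Concretely: summing the eigenvalue equations over \emph{all} vertices gives $\sum_{uv\in E(G)} (x_u + x_v) = \sum_v d(v)\, x_v$, i.e.\ $\x^\intercal \mathbf{A}(G)\mathbf{1} = \mathbf{1}^\intercal \mathbf{A}(G)\x = 0$; translating this into orbit variables furnishes a relation the $o_v(G)$ per-orbit equations do not, boosting the rank bound by one.

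The main obstacle I anticipate is organizing the bookkeeping of incidences between vertex orbits and edge orbits so that the two sign conventions (the $\pm1$ on $\x$ from $\sgn$, and the choice of representative per orbit) are consistent, and verifying that the ``extra'' relation is genuinely independent of the $o_v(G)$ orbit equations rather than a consequence of them — which is exactly the step that distinguishes $o_e \ge o_v + 1$ from the weaker $o_e \ge o_v$. A clean way to handle both issues at once is to phase everything through the quotient matrix of $\mathbf{A}(G)$ with respect to the orbit partition (the equitable partition into vertex orbits), whose eigenvalues interlace those of $\mathbf{A}(G)$; the value $0$ is an eigenvalue of this quotient with a nowhere-zero eigenvector, and a rank/nullity count on the associated bipartite incidence structure between vertex orbits and edge orbits yields the claimed inequality, with the strictness coming from non-bipartiteness of $G$ (which forbids the degenerate equality case).

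\begin{remark}
The corollary is immediate: if $G$ were edge transitive then $o_e(G)=1$, forcing $o_v(G)\le 0$, which is impossible for a nonempty graph.
\end{remark}
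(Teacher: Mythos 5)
Your closing remark --- that Theorem~\ref{thm:norb} immediately rules out edge transitivity, since $o_e(G)=1$ would force $o_v(G)\le 0$ --- is exactly how the paper obtains this corollary, and that step is correct. The difficulty lies in the proof you propose for Theorem~\ref{thm:norb} itself, on which the corollary rests.

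The central gap is in the mechanism you propose for the ``$+1$''. The global relation $\mathbf{1}^\intercal \mathbf{A}(G)\x = 0$ is simply the sum of the $n$ local conditions $\sum_{u\in N(v)}x_u=0$, hence also the sum of your $o_v(G)$ per-orbit equations; it is a linear consequence of them and cannot raise the rank of the system. Moreover, even the weaker bound $o_e(G)\ge o_v(G)$ does not follow from ``a homogeneous system of $o_v$ equations in $o_e$ unknowns admits a nowhere-zero solution'' unless those $o_v$ equations are linearly independent, which you do not establish and which fails in general. There is also a well-definedness problem in setting up the system: the quantity constant on an edge orbit is the product $x_ux_v$, not ``the kernel value at the other end'' --- for an orbit of unlike edges the two endpoints carry opposite signs, and for an intra-orbit edge both ends lie in the same vertex orbit --- while the eigenvalue equation is linear in the $x_v$, so it does not translate into a linear system indexed by edge orbits. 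The quotient-matrix fallback suffers the same defect: the restriction of $\x$ to orbit representatives is an eigenvector of the quotient of the equitable orbit partition only when $\x$ is constant on each orbit, i.e.\ when no sign-reversing automorphism exists; but it is precisely the presence of a sign-reversing automorphism (guaranteed by Lemma~\ref{lem:allzerolemma} or Proposition~\ref{prop:8}) that the paper exploits, splitting one edge type into ``like'' and ``unlike'' orbits to produce the extra edge orbit. The paper's argument is combinatorial rather than rank-based: connectivity of the vertex-orbit graph gives $o_e\ge o_v-1$, non-bipartiteness of $G$ forces an additional edge type giving $o_e\ge o_v$, and the like/unlike splitting (or, when the orbit graph is an odd cycle with no intra-orbit edges, the impossibility of properly $2$-colouring its edges) supplies the final increment. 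Your sketch contains the right first ingredient ($\x^\alpha=\pm\x$ and the invariance of $x_ux_v$ on edge orbits) but not a working route from there to the strict inequality.
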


It is, however, relatively
easy to find infinite families of vertex-transitive nut graphs with \emph{few} edge orbits.
For example, in Sections~{\color{red}\ref{subsec:fam12}} and~{\color{red}\ref{subsec:fam23}}, we provide infinite families of nut graphs for $(o_v, o_e)  = (1, 2)$ and $(o_v, o_e)  = (2, 3)$.
Moreover, as we saw from our examination of the
census, many core graphs are edge transitive. 

To prepare for the proof of Theorem~\ref{thm:main}, we recall some established results. 

\begin{lemma}[{\cite[Lemma~3.2.1]{Godsil2001}}]
Let $G$ be an edge-transitive graph with no isolated vertices. If $G$ is not vertex transitive, 
then $\Aut(G)$ has exactly two orbits, and these two orbits are a bipartition of $G$.
\end{lemma}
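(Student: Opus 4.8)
The plan is to prove the cited lemma (Godsil–Royle, Lemma 3.2.1) directly, using the fact that edges are the basic transitive objects and that every vertex lies on some edge. Let $G$ be edge-transitive with no isolated vertices, and suppose $G$ is not vertex transitive. First I would fix one edge $e_0 = \{a, b\}$, and observe that since $G$ has no isolated vertices, every vertex $v \in V(G)$ is an endpoint of at least one edge $f$. Because $G$ is edge transitive, there is an automorphism taking $f$ to $e_0$, so $v$ is mapped to either $a$ or $b$. Hence every vertex lies in the orbit of $a$ or the orbit of $b$, which shows there are \emph{at most} two vertex orbits.

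Next I would argue there are \emph{exactly} two. Since $G$ is assumed not vertex transitive, there is more than one orbit, so combined with the previous step the number of vertex orbits is exactly two; call them $A$ (containing $a$) and $B$ (containing $b$), with $A \cap B = \emptyset$ and $A \cup B = V(G)$. The key remaining claim is that $\{A, B\}$ is a bipartition of $G$, i.e.\ every edge has one endpoint in $A$ and one in $B$. To see this, take any edge $f = \{u, w\}$; by edge transitivity map $f$ to $e_0 = \{a, b\}$ under some $\alpha \in \Aut(G)$, so $\{u^\alpha, w^\alpha\} = \{a, b\}$. Since automorphisms preserve orbits, one of $u, w$ lies in the orbit of $a$ (namely $A$) and the other in the orbit of $b$ (namely $B$); thus $f$ joins $A$ to $B$. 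As $f$ was arbitrary, no edge lies within $A$ or within $B$, so $\{A, B\}$ is indeed a bipartition.

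The main obstacle, such as it is, lies in being careful about the logical structure of the at-most-two argument: one must invoke the no-isolated-vertices hypothesis precisely at the point where every vertex is claimed to be an edge endpoint, since an isolated vertex would form its own orbit and break both the count and the bipartition conclusion. A secondary subtlety is ensuring the two orbits $A$ and $B$ are genuinely distinct (which is exactly where the non-vertex-transitivity hypothesis enters) before asserting the bipartition; if $a$ and $b$ happened to lie in the same orbit the graph would be vertex transitive, contradicting the assumption. Beyond these points the argument is routine, relying only on the orbit-preserving property of automorphisms and the transitive action on edges.
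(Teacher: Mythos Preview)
Your proof is correct and is essentially the standard argument found in Godsil and Royle's textbook. Note, however, that the paper does not supply its own proof of this lemma: it is stated as a cited result from \cite{Godsil2001} (with a parallel reference to Biggs) and used as background, so there is no in-paper proof to compare against. Your write-up would serve perfectly well as a self-contained justification if one were desired.
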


\par\noindent
A similar statement appears in \cite{Biggs} as Proposition~15.1.
A theorem from a previous investigation specifies necessary conditions relating 
order and degree of a vertex-transitive nut graph: 

\begin{theorem}[{\cite[Theorem 10]{Jan2020}}]
\label{thm:four}
Let $G$ be a vertex-transitive nut graph on $n$ vertices, of degree $d$.
Then $n$ and $d$ satisfy the following conditions. Either $d \equiv 0 \pmod{4}$,
and $n \equiv 0 \pmod{2}$ and $n \geq d + 4$; or $d \equiv 2 \pmod{4}$, and 
$n \equiv 0 \pmod{4}$ and $n \geq d + 6$.
\end{theorem}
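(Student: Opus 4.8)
The plan is to pin down the kernel eigenvector very precisely and then extract the congruences and the bounds from elementary counting. Since $G$ is vertex-transitive it is regular, of some degree $d \ge 3$, and I write $\x \in \ker \mathbf{A}(G)$ for its (essentially unique) full kernel eigenvector. Because $\eta(G) = 1$, every automorphism $\alpha \in \Aut(G)$ satisfies $P_\alpha \x = c_\alpha \x$ for a scalar $c_\alpha$, where $P_\alpha$ is the permutation matrix of $\alpha$; as $P_\alpha$ is orthogonal and $\x$ is real, $c_\alpha \in \{+1, -1\}$, and $\alpha \mapsto c_\alpha$ defines a homomorphism $\sigma \colon \Aut(G) \to \{\pm 1\}$. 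First I would record that $\x \perp \mathbf{j}$ (the all-ones vector), since $\mathbf{A}(G)\mathbf{j} = d\mathbf{j}$ with $d \neq 0$, whence $\sum_v x_v = 0$.

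The crucial step is to show that $\x$ is, up to scaling, a $\pm 1$ vector with balanced support. If $\sigma$ were trivial then every automorphism would fix $\x$ entrywise, and vertex-transitivity would force $\x$ to be constant, contradicting $\sum_v x_v = 0$; hence $\sigma$ is onto $\{\pm 1\}$ and $\Aut^{+}(G) := \ker \sigma$ has index $2$. Writing $V^{+} = \{v : x_v > 0\}$ and $V^{-} = \{v : x_v < 0\}$, any $\alpha$ with $c_\alpha = -1$ satisfies $x_{\alpha(v)} = -x_v$ and so swaps $V^{+}$ and $V^{-}$, giving $|V^{+}| = |V^{-}| = n/2$; in particular $n$ is even. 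A short argument shows $\Aut^{+}(G)$ acts transitively on each of $V^{+}$ and $V^{-}$: any automorphism carrying one vertex of $V^{+}$ to another cannot lie outside $\Aut^{+}(G)$, else it would send that vertex into $V^{-}$. Since the elements of $\Aut^{+}(G)$ fix $\x$ entrywise, $\x$ is constant on $V^{+}$ and on $V^{-}$, and the swapping automorphism relates the two constants by a sign; after scaling, $x_v = +1$ on $V^{+}$ and $x_v = -1$ on $V^{-}$.

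With this normal form the kernel condition $\sum_{u \sim v} x_u = 0$ reads: every vertex has exactly $d/2$ neighbours in $V^{+}$ and $d/2$ in $V^{-}$. This forces $d$ to be even and makes the subgraph induced on $V^{+}$ (and on $V^{-}$) a $(d/2)$-regular graph on $n/2$ vertices. The congruences then follow from integrality of edge counts: counting edges inside $V^{+}$ gives $e(V^{+},V^{+}) = nd/8$, so $8 \mid nd$. Writing $d = 2m$, this says $4 \mid nm$, which is automatic when $d \equiv 0 \pmod 4$ (leaving only the constraint that $n$ be even), but forces $n \equiv 0 \pmod 4$ when $d \equiv 2 \pmod 4$, since then $m$ is odd.

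For the lower bounds I would rule out the extremal configuration. A $(d/2)$-regular graph on $n/2$ vertices needs $n/2 \ge d/2 + 1$, i.e.\ $n \ge d+2$. If $n = d+2$ then each class induces the complete graph $K_{d/2+1}$, and each vertex misses exactly one vertex of the opposite class, so the missing pairs form a perfect matching; thus $G$ is $K_n$ minus a perfect matching, the cocktail-party graph, whose nullity is $n/2 > 1$, so it is not a nut graph. Hence $n = d+2$ is impossible, and since $n-d$ is even we obtain $n \ge d+4$ when $d \equiv 0 \pmod 4$. When $d \equiv 2 \pmod 4$ and $n \equiv 0 \pmod 4$ we have $n - d \equiv 2 \pmod 4$, so after excluding $n-d = 2$ the smallest admissible value is $n-d = 6$, giving $n \ge d+6$. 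The main obstacle is the second step — establishing the $\pm 1$ normal form — which is precisely where one-dimensionality of the kernel and the transitivity of $\Aut^{+}(G)$ must be combined with care; once that normal form is secured, the parity, divisibility, and bound arguments reduce to routine counting.
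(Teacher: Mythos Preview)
Your proof is correct and complete. However, note that the paper does not actually prove this theorem: it is quoted from \cite{Jan2020} as an established result, so there is no ``paper's own proof'' to compare against. What the paper does prove is Lemma~\ref{lem:7}, which establishes the $\pm 1$ normal form of the kernel eigenvector and the evenness of $n$ and $d$ --- essentially the first half of your argument --- using the language of one-dimensional irreducible representations rather than your explicit homomorphism $\sigma\colon \Aut(G) \to \{\pm 1\}$ and the index-$2$ subgroup $\Aut^{+}(G)$.

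Your argument is more explicit than the paper's Lemma~\ref{lem:7} on the key point (transitivity of $\Aut^{+}(G)$ on each sign class), and you then go further than anything in the present paper by extracting the $\bmod\ 4$ congruences from the integrality of $e(V^{+},V^{+}) = nd/8$ and the lower bounds by identifying the extremal case $n = d+2$ as the cocktail-party graph $K_n$ minus a perfect matching, whose nullity $n/2 > 1$ disqualifies it as a nut graph. These steps presumably mirror what is done in \cite{Jan2020}, but they are not reproduced in this paper, so your write-up stands on its own.
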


\begin{lemma}
\label{lem:7}
Let $G$ be a vertex-transitive nut graph and let \,$\x = [x_1\ \ldots\ x_n]^\intercal \in \ker \mathbf{A}(G)$. 
Then the following statements hold:
\begin{enumerate}[label=(\alph*)]
\item\label{lem7_claima} $\x = \pm \x^\alpha$ for every $\alpha \in \Aut(G)$;
\item\label{lem7_claimb} $| x_i | = | x_j |$ for all $i$ and $j$;
\item\label{lem7_claimc} we can take the entries to be $x_i \in \{+1, -1\}$;
\item\label{lem7_claimd} $d(G)$ and $n$ are both even.
\end{enumerate}
\end{lemma}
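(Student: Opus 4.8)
The plan is to exploit the one-dimensionality of the kernel together with vertex-transitivity. First I would observe that since $G$ is a nut graph, $\ker \mathbf{A}(G)$ is spanned by a single full vector $\x$. For any $\alpha \in \Aut(G)$, let $\mathbf{P}_\alpha$ be the corresponding permutation matrix; since $\alpha$ preserves adjacency, $\mathbf{P}_\alpha$ commutes with $\mathbf{A}(G)$, so $\mathbf{P}_\alpha \x = \x^\alpha$ again lies in $\ker \mathbf{A}(G)$. By one-dimensionality, $\x^\alpha = c_\alpha \x$ for some scalar $c_\alpha$, and since $\mathbf{P}_\alpha$ is a permutation matrix it preserves the multiset of entries of $\x$ (in particular $\|\x^\alpha\| = \|\x\|$), forcing $c_\alpha = \pm 1$. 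This proves part~\ref{lem7_claima}. (Equivalently, $\alpha \mapsto c_\alpha$ is a homomorphism $\Aut(G) \to \{+1,-1\}$, a fact worth stating since it is reused later in the paper.)

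For part~\ref{lem7_claimb}, I would use vertex-transitivity: given $i$ and $j$, pick $\alpha$ with $i^\alpha = j$. Then $x_j$ is, up to the sign $c_\alpha$, equal to the entry of $\x$ in position $(\alpha^{-1}$ applied to $j)$, i.e.\ $x_j = c_\alpha \, x_i$ (being careful about the direction of the permutation action on coordinates). Taking absolute values gives $|x_i| = |x_j|$. Part~\ref{lem7_claimc} is then immediate: $\x$ is a nonzero scalar multiple of a $\{+1,-1\}$ vector, and the kernel is a subspace, so we may rescale so that every entry is $\pm 1$.

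Part~\ref{lem7_claimd} is where the real content lies, and I expect it to be the main obstacle. One route is to invoke Theorem~\ref{thm:four} directly, which already lists the admissible $(n,d)$ pairs and in every case has $n$ and $d$ even — but a self-contained argument is cleaner. With entries normalised to $\pm 1$, the eigenvector equation $\mathbf{A}(G)\x = \mathbf{0}$ says that for each vertex $v$, the sum $\sum_{u \in N(v)} x_u = 0$; since each $x_u = \pm 1$, this sum has the same parity as $|N(v)| = d$, so $d$ must be even. For the parity of $n$: consider the all-ones vector $\mathbf{j}$ and the sign vector $\x$. If $n$ were odd then $\x$ (having $\pm 1$ entries) would have an odd number of each... more carefully, I would argue via the inner product $\x^\intercal \mathbf{j} = \sum_i x_i$, which counts (number of $+1$'s) minus (number of $-1$'s); alternatively, split $V(G)$ into the white class $W$ (where $x_i = +1$) and black class $B$. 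The condition $\sum_{u \in N(v)} x_u = 0$ for $v \in W$ says $v$ has equally many white and black neighbours, hence $d$ is even (recovering the above) and, summing over $v \in W$, the number of edges inside $W$ counted with multiplicity equals the number of $W$–$B$ edges, i.e.\ $2e(W) = e(W,B)$; similarly $2e(B) = e(W,B)$, so $e(W) = e(B)$ and $e(W,B) = 2e(W)$. Counting degrees: $d|W| = 2e(W) + e(W,B) = 4e(W)$ and likewise $d|B| = 4e(B) = 4e(W)$, so $|W| = |B|$ and $n = |W| + |B| = 2|W|$ is even. This simultaneously re-proves that $d$ is even (as $d|W| = 4e(W)$ with $|W|=n/2$, and vertex-transitivity makes $d \geq 3$, so $d \geq 4$), completing the lemma. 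The delicate points to get right are the bookkeeping of the permutation action on coordinates in \ref{lem7_claimb} and making sure the edge-counting identities in \ref{lem7_claimd} use the normalised eigenvector consistently.
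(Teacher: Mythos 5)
Your proposal is correct and follows essentially the same route as the paper: parts \ref{lem7_claima}--\ref{lem7_claimc} are proved identically (the paper phrases \ref{lem7_claima} in terms of the kernel spanning a one-dimensional representation of $\Aut(G)$, which is the same fact as your permutation-matrix argument), and the evenness of $d(G)$ is obtained in both cases from the parity of the local sum $\sum_{u\in N(v)}x_u=0$ with $\pm 1$ entries. The only divergence is the proof that $n$ is even: the paper notes that $\x$ is orthogonal to the all-ones Perron eigenvector of the regular graph $G$, so $\sum_i x_i=0$ and the two sign classes are equinumerous, whereas you reach the same conclusion $|W|=|B|$ by double-counting edges within and between the sign classes --- a marginally more elementary variant that avoids invoking the Perron vector, but one that relies on regularity in just the same way.
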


\begin{proof}
As $G$ is vertex-transitive and $n \geq 7$, $G$ has a non-trivial automorphism group $\Aut(G)$, i.e.\ $|\Aut(G)| > 1$.
As $G$ is a nut graph, the kernel eigenvector $\x$ belongs to a one-dimensional eigenspace, and hence
spans a one-dimensional irreducible representation of $\Aut(G)$.
As the graph is vertex-transitive, each element $\alpha \in \Aut(G)$ sends
vertex $v$ to an image vertex $v^\alpha$ ($v^\alpha$ may be $v$).
The action of $\alpha$ on the vertices of $G$ can be extended to $\x$ by defining
$\x^\alpha = [x_{1^\alpha}\ \ldots\  x_{n^\alpha}]$.
As a one-dimensional irreducible representation
has trace either $+1$ or $-1$ under any particular
automorphism  $\alpha \in \Aut(G)$, it follows that $\x = \pm \x^\alpha$. This proves claim (a).
In particular, $|x_1| = |x_{1^\alpha}|$.
Since for every vertex $v$ there exists an $\alpha \in \Aut(G)$ such that $v = 1^\alpha$, the claim (b) follows.
To show that the entries in the kernel eigenvector can be drawn from the set $\{+1, -1\}$, it is enough to
normalise the vector to $x_1 = 1$, verifying claim (c).
to establish claim (d), note that
the local condition for entries of the vector $\x \in \ker \mathbf{A}(H)$ is
\begin{equation}
\label{eq:localCondition}
\sum_{u \in N(v)} x_u = 0 \quad \text{for }v = 1, \ldots, n.
\end{equation}
As all entries of $\x$ are in $\{ +1, -1 \}$, every vertex $v$ must be of \emph{even degree}.
Since $G$ is a regular graph, the entries of the Perron vector $\y$ of $G$ (i.e.\ the eigenvector that corresponds to the largest
eigenvalue $\lambda_1$) are all equal to $+1$.
As $\x$ is orthogonal to $\y$, i.e.\ $\sum_{i=1}^{n} x_i = 0$,
there are equal numbers of $+1$ and $-1$ entries in $\x$, and $G$ must have even order $n$, completing claim (d).
\end{proof}

We note that the arguments used in claims \ref{lem7_claima} to \ref{lem7_claimc} in the proof of Lemma~\ref{lem:7} can be applied 
orbit-wise for graphs that are not vertex transitive. Lemma~\ref{lem:7} thus generalises naturally to the following:

\begin{lemma}
\label{lem:7gen}
Let $G$ be a nut graph and let \,$\x = [x_1\ \ldots\ x_n]^\intercal \in \ker \mathbf{A}(G)$. 
Then the following statements hold:
\begin{enumerate}[label=(\alph*)]
\item $\x = \pm \x^\alpha$ for every $\alpha \in \Aut(G)$;
\item $| x_i | = | x_j |$ if $i$ and $j$ belong to the same vertex orbit;
\item we can take the entries to be $x_i \in \{+a_j, -a_j\}$ if $i \in \mathcal{V}_j$, where $a_j$ is a non-zero constant for orbit~$\mathcal{V}_j$.
\end{enumerate}
\end{lemma}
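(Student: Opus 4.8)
The plan is to reuse, orbit by orbit, the representation-theoretic argument already employed in the proof of Lemma~\ref{lem:7}, observing that nothing in the derivation of its claims \ref{lem7_claima}--\ref{lem7_claimc} actually used vertex transitivity; only claim \ref{lem7_claimd} (and the step about the Perron vector) did.

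First I would establish claim (a). Since $G$ is a nut graph, $\eta(G) = 1$, so $\ker \mathbf{A}(G)$ is one-dimensional and spanned by $\x$, which we may take real because $\mathbf{A}(G)$ is real symmetric. For $\alpha \in \Aut(G)$, let $P_\alpha$ denote the associated permutation matrix, so that $P_\alpha \x = \x^\alpha$. Because $\alpha$ preserves adjacency, $P_\alpha$ commutes with $\mathbf{A}(G)$, hence it maps $\ker \mathbf{A}(G)$ into itself; one-dimensionality then forces $\x^\alpha = c_\alpha \x$ for some scalar $c_\alpha$. Since $\x^\alpha$ is merely a rearrangement of the entries of the real vector $\x$, the scalar $c_\alpha = x_{1^\alpha}/x_1$ is real, and comparing $\sum_i |x_i| = |c_\alpha| \sum_i |x_i|$ gives $|c_\alpha| = 1$; therefore $c_\alpha = \pm 1$, i.e.\ $\x = \pm \x^\alpha$. (Equivalently, $\x$ spans a one-dimensional real representation of the finite group $\Aut(G)$, whose character takes only the values $\pm 1$, exactly as in Lemma~\ref{lem:7}.) Claim (b) is then immediate: if $i$ and $j$ lie in a common vertex orbit, choose $\alpha$ with $i^\alpha = j$; from $\x = \pm \x^\alpha$ we get $x_i = \pm x_{i^\alpha} = \pm x_j$, so $|x_i| = |x_j|$. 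For claim (c), let $\mathcal{V}_1, \dots, \mathcal{V}_{o_v(G)}$ be the vertex orbits; by (b) the number $a_j := |x_i|$ is independent of the choice of $i \in \mathcal{V}_j$, and since $\x$ is full (the defining property of a nut graph) each $a_j$ is nonzero, so $x_i \in \{+a_j, -a_j\}$ whenever $i \in \mathcal{V}_j$. No global normalisation is required: any nonzero kernel vector already has this shape, though one may rescale so that, say, $\max_j a_j = 1$.

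There is essentially no hard step here; the proof is a routine transcription of the vertex-transitive case. The only points warranting a word of care are, first, that in the non-transitive setting one can no longer conclude that all $|x_i|$ are equal --- merely that they are constant on each orbit, with the orbit constants $a_j$ potentially all distinct --- and, second, that it is precisely the nut-graph hypothesis (fullness of $\x$), rather than regularity or transitivity, that guarantees $a_j \neq 0$ for every orbit. I would state the lemma's proof in two or three sentences, pointing back to the proof of Lemma~\ref{lem:7} for claim (a) and deriving (b) and (c) as above.
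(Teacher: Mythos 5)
Your proposal is correct and follows essentially the same route as the paper, which does not write out a separate proof of this lemma but simply observes that the arguments for claims (a)--(c) of Lemma~\ref{lem:7} apply orbit-wise without the vertex-transitivity hypothesis. Your explicit justification that the scalar $c_\alpha$ is real of modulus $1$ (via the permutation matrix commuting with $\mathbf{A}(G)$ and comparison of $\sum_i |x_i|$) is a slightly more self-contained version of the paper's appeal to one-dimensional representations, and the remaining deductions of (b) and (c) from fullness of $\x$ match the intended argument exactly.
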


This lemma will be used in the proof of Theorem~\ref{thm:norb}.
Before the proof, we introduce some definitions. The first of these deal with edges.
Let $G$ be a nut graph with $k$ vertex orbits $V(G) = \mathcal{V}_1 \sqcup \mathcal{V}_2 \sqcup \cdots \sqcup \mathcal{V}_k$. 
There are several \emph{types} of edge
(as indicated schematically in Figure~\ref{fig:edge_types2}
for the case $k = 3$):
\begin{enumerate}[label=(\alph*)]
\item \emph{intra-orbit} edge types, where both endvertices of an edge are in the same orbit $\mathcal{V}_i$, denoted $e_{i}$;
\item \emph{inter-orbit} edge types, where endvertices of an edge are in two different orbits $\mathcal{V}_i$ and $\mathcal{V}_j$, $i \neq j$, denoted $e_{ij} = e_{ji}$.
\end{enumerate}
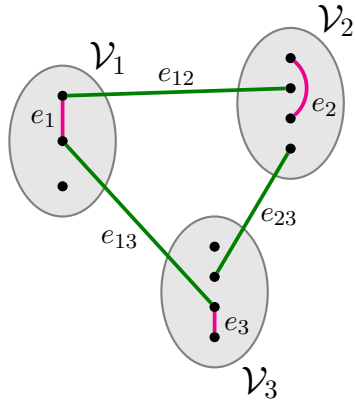
\begin{figure}[!htb]
\centering
\begin{tikzpicture}
\tikzstyle{edge}=[draw,line width=1.4pt]
\tikzstyle{every node}=[draw, circle, fill=black, inner sep=1.2pt]
\draw[thick,color=gray,fill=gray!20!white] (0,0) ellipse (0.7cm and 1cm);
\draw[thick,color=gray,fill=gray!20!white] (3,0.5) ellipse (0.7cm and 1cm);
\draw[thick,color=gray,fill=gray!20!white] (2,-2) ellipse (0.7cm and 1cm);
\node[draw=none,fill=none] at (0.5+0.1, 1.1) {\Large $\mathcal{V}_1$};
\node[draw=none,fill=none]  at (3.5+0.1, 1.6) {\Large $\mathcal{V}_2$};
\node[draw=none,fill=none]  at (2.5+0.1, -3.2) {\Large $\mathcal{V}_3$};
\node (v1) at (0,0.6) {}; 
\node (v2) at (0,0.0) {}; 
\node (v3) at (0,-0.6) {}; 
\node (v4) at (3,0.6+0.5) {}; 
\node (v5) at (3,0.2+0.5) {}; 
\node (v6) at (3,-0.2+0.5) {}; 
\node (v7) at (3,-0.6+0.5) {}; 
\node (v8) at (2,0.6-2.0) {}; 
\node (v9) at (2,0.2-2.0) {}; 
\node (v10) at (2,-0.2-2.0) {}; 
\node (v11) at (2,-0.6-2.0) {}; 
\draw[edge,color=green!50!black] (v2) -- (v10);
\draw[edge,color=green!50!black] (v1) -- (v5);
\draw[edge,color=green!50!black] (v9) -- (v7);
\draw[edge,color=magenta] (v1) -- (v2);
\draw[edge,color=magenta] (v10) -- (v11);
\draw[edge,color=magenta] (v4) edge[bend left=50] (v6);
\node[draw=none,fill=none] at (1.5, 0.85) {$e_{12}$};
\node[draw=none,fill=none] at (0.75, -1.3) {$e_{13}$};
\node[draw=none,fill=none] at (2.85, -1.0) {$e_{23}$};
\node[draw=none,fill=none] at (2.3, -2.45) {$e_{3}$};
\node[draw=none,fill=none] at (3.45, 0.45) {$e_{2}$};
\node[draw=none,fill=none] at (-0.25, 0.3) {$e_{1}$};
\end{tikzpicture}
\caption{Schematic representation of a graph with three vertex orbits (represented by the three bags of vertices). There are six edge types, denoted 
$e_{1}, e_{2}, e_{3}$ (intra-orbit), $e_{12}, e_{13}$ and $e_{23}$ (inter-orbit).}
\label{fig:edge_types2}
\end{figure}
It is clear that edges of different types cannot belong to the same edge orbit. Moreover, a single edge type may comprise several edge orbits.

An additional definition will also be useful for the proof.
Let the \emph{vertex-orbit graph} of $G$, denoted $\mathfrak{G}(G)$, be the graph whose vertices are vertex orbits of $G$ and
vertices $\mathcal{V}_i$ and 
$\mathcal{V}_j$ are adjacent in $\mathfrak{G}(G)$ if there exists at least one edge of type $e_{ij}$ in $G$. 
Note that in our case $\mathfrak{G}(G)$ contains $k$ vertices. Moreover, the graph $\mathfrak{G}(G)$ is a simple graph, not to
be confused with the orbit graph as defined in~\cite{PisanskiBook}, which is typically a pregraph.

\begin{lemma}
\label{lem:allzerolemma}
Let $G$ be a nut graph with $k$ vertex orbits $V(G) = \mathcal{V}_1 \sqcup \mathcal{V}_2 \sqcup \cdots \sqcup \mathcal{V}_k$
and let \,$\x = [x_1\ \ldots\ x_n]^\intercal \in \ker \mathbf{A}(G)$. 
Suppose there exists a vertex orbit $\mathcal{V}_\ell$ in $G$, such that 
\begin{enumerate}[label=(\roman*)]
\item $\mathcal{V}_\ell$ is a leaf in $\mathfrak{G}(G)$, and
\item vertices $\mathcal{V}_\ell$ form an independent set in $G$.
\end{enumerate}
Then for every orbit $\mathcal{V}_i$ of $G$ it holds that
\begin{equation}
\label{eq:allzero}
\sum_{j \in \mathcal{V}_i} x_j = 0.
\end{equation}
Moreover, for each orbit $\mathcal{V}_i$ it holds that $|\{ j \in \mathcal{V}_i : x_j > 0 \}| = |\{ j \in \mathcal{V}_i : x_j < 0 \}|$ and
the size of the orbit $\mathcal{V}_i$ is even.
\end{lemma}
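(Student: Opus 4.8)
The plan is to exploit the leaf orbit $\mathcal{V}_\ell$ together with the local eigenvector condition~\eqref{eq:localCondition}. Let $\mathcal{V}_m$ be the unique neighbour of $\mathcal{V}_\ell$ in $\mathfrak{G}(G)$ (it exists and is unique since $\mathcal{V}_\ell$ is a leaf; note $m\neq\ell$ because $\mathcal{V}_\ell$ is independent, so there is no loop $e_\ell$). First I would sum the local condition~\eqref{eq:localCondition} over all vertices $v\in\mathcal{V}_\ell$: this gives $\sum_{v\in\mathcal{V}_\ell}\sum_{u\in N(v)}x_u=0$. Because $\mathcal{V}_\ell$ is independent and its only incident edge type is $e_{\ell m}$, every neighbour $u$ of a vertex of $\mathcal{V}_\ell$ lies in $\mathcal{V}_m$, so the double sum counts each $x_u$ with $u\in\mathcal{V}_m$ a number of times equal to the number of edges from $u$ into $\mathcal{V}_\ell$. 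The key point is that this multiplicity is constant over the orbit $\mathcal{V}_m$: since $\mathcal{V}_\ell$ and $\mathcal{V}_m$ are each unions of a single orbit and $\Aut(G)$ acts transitively on $\mathcal{V}_m$ while preserving $\mathcal{V}_\ell$, every vertex of $\mathcal{V}_m$ has the same number $c\geq 1$ of neighbours in $\mathcal{V}_\ell$. Hence the double sum equals $c\sum_{u\in\mathcal{V}_m}x_u$, and since $c\neq 0$ we conclude $\sum_{u\in\mathcal{V}_m}x_u=0$, which is~\eqref{eq:allzero} for $i=m$.

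Next I would propagate this to the remaining orbits by induction on the structure of $\mathfrak{G}(G)$. Having established $\sum_{j\in\mathcal{V}_i}x_j=0$ for some orbit $\mathcal{V}_i$, consider the ``quotient'' of the local condition: sum~\eqref{eq:localCondition} over all $v\in\mathcal{V}_i$. By the same orbit-regularity argument, for each orbit $\mathcal{V}_p$ the total contribution of the $x_u$, $u\in\mathcal{V}_p$, to this sum is $c_{ip}\sum_{u\in\mathcal{V}_p}x_u$ where $c_{ip}$ is the constant number of neighbours in $\mathcal{V}_i$ of a vertex of $\mathcal{V}_p$ (and if $\mathcal{V}_i$ is not independent there is also a term $2e_i$-worth of contributions from within $\mathcal{V}_i$, but that is $\le$ a multiple of $\sum_{u\in\mathcal{V}_i}x_u=0$). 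Thus $\sum_p c_{ip}\bigl(\sum_{u\in\mathcal{V}_p}x_u\bigr)=0$. This is a linear relation among the orbit-sums $S_p:=\sum_{u\in\mathcal{V}_p}x_u$, one for each orbit $\mathcal{V}_i$; collecting them gives $\mathbf{B}\,\mathbf{S}=\mathbf{0}$ where $\mathbf{B}=(c_{ip})$ is (a nonnegative integer matrix, essentially) the adjacency-type matrix of $\mathfrak{G}(G)$ weighted by the edge multiplicities. Since $\mathfrak{G}(G)$ is connected (as $G$ is connected) and we already know $S_m=0$, I would show that $\mathbf{B}$ forces all $S_p=0$: the cleanest route is to observe that $\mathbf{S}=\mathbf{0}$ along the connected component is the unique solution with $S_m=0$, which follows because the all-ones-on-orbits vector is a positive eigenvector of $\mathbf{B}^\intercal$ (Perron--Frobenius, using connectedness of $\mathfrak{G}(G)$), so the cokernel of $\mathbf{B}$ is orthogonal to a positive vector and cannot contain a vector supported on a proper nonempty subset of orbits; combined with $S_m=0$ this rules out any nonzero $\mathbf{S}$. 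Alternatively, one can argue directly: walk from $\mathcal{V}_m$ along a spanning tree of $\mathfrak{G}(G)$, at each step using the relation at the already-solved orbit to solve for the next.

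Finally, the ``moreover'' clause is immediate from~\eqref{eq:allzero}: by Lemma~\ref{lem:7gen}\,(c) we may take $x_j=\varepsilon_j a_i$ for $j\in\mathcal{V}_i$ with $\varepsilon_j\in\{+1,-1\}$ and $a_i\neq 0$; then $0=S_i=a_i\sum_{j\in\mathcal{V}_i}\varepsilon_j$ forces equal numbers of $+1$ and $-1$ signs in $\mathcal{V}_i$, whence $|\mathcal{V}_i|$ is even. I expect the main obstacle to be the middle step: making the propagation argument fully rigorous requires being careful that the orbit-incidence numbers $c_{ip}$ are genuinely well-defined constants (an orbit-counting / double-counting check), and that the connectedness of $\mathfrak{G}(G)$ really does let us bootstrap from the single known orbit-sum $S_m=0$ to all of them — the Perron--Frobenius packaging is the neatest way to close that gap, but it must be invoked correctly for the possibly non-symmetric matrix $\mathbf{B}$.
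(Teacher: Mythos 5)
Your first step (summing the local condition over the leaf orbit to get $\sum_{u\in\mathcal{V}_m}x_u=0$) is exactly the paper's, and your derivation of the ``moreover'' clause from \eqref{eq:allzero} is fine. The gap is in the middle: the propagation to the other orbits does not work. The system $\mathbf{B}\mathbf{S}=\mathbf{0}$ of orbit-sum relations, even together with $S_m=0$ and connectedness of $\mathfrak{G}(G)$, does not force $\mathbf{S}=\mathbf{0}$. Concretely, suppose $\mathfrak{G}(G)$ is the path $\mathcal{V}_\ell - \mathcal{V}_m - \mathcal{V}_3$ with both end orbits independent. The relation obtained by summing over $\mathcal{V}_\ell$ is $d_{m\ell}S_m=0$, the one over $\mathcal{V}_3$ is $d_{m3}S_m=0$, and the one over $\mathcal{V}_m$ is $d_{\ell m}S_\ell+d_{mm}S_m+d_{3m}S_3=0$; after imposing $S_m=0$ all that survives is the single relation $d_{\ell m}S_\ell+d_{3m}S_3=0$, which leaves a one-parameter family of solutions. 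Your Perron--Frobenius packaging does not rescue this: the all-ones vector is not in general an eigenvector of $\mathbf{B}^\intercal$ (the row/column sums of $\mathbf{B}$ are vertex degrees, which vary between orbits unless $G$ is regular), and in any case orthogonality of the kernel to a positive vector is perfectly compatible with a kernel vector having entries of both signs --- that is precisely what $(S_\ell,0,S_3)$ with $d_{\ell m}S_\ell=-d_{3m}S_3$ is. The spanning-tree alternative fails for the same reason: the relation at an already-solved orbit involves all of its neighbours in $\mathfrak{G}(G)$ simultaneously, so it only determines ``the next'' orbit when exactly one neighbour is still unknown, and starting from $\mathcal{V}_m$ (which may have several unsolved neighbours) you cannot get going.

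What actually closes the argument is not linear algebra on the orbit sums but the global sign dichotomy of Lemma~\ref{lem:7gen}(a). From $\sum_{u\in\mathcal{V}_m}x_u=0$ and the fact that entries are nonzero and of constant magnitude on $\mathcal{V}_m$ (Lemma~\ref{lem:7gen}(b)), the orbit $\mathcal{V}_m$ contains a vertex $v$ with $x_v>0$ and a vertex $w$ with $x_w<0$; any $\alpha\in\Aut(G)$ with $v^\alpha=w$ then satisfies $\x^\alpha=-\x$ (it cannot satisfy $\x^\alpha=+\x$, since it flips a sign at $v$), i.e.\ $\alpha$ is sign-reversing \emph{on the whole graph}. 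Within every orbit $\mathcal{V}_i$, $\alpha$ therefore restricts to a bijection between the positive and the negative vertices, which yields equal counts, even orbit size, and $\sum_{j\in\mathcal{V}_i}x_j=0$ for all $i$ at once. This is the idea your proposal is missing; with it, the propagation step becomes unnecessary.
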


\begin{proof}
Let $\mathcal{V}_{\ell'}$ be the neighbour of $\mathcal{V}_{\ell}$ in $\mathfrak{G}(G)$. Let $d_{ij}$ be the number of neighbours of a vertex $v \in \mathcal{V}_i$ that reside in $\mathcal{V}_j$. The local condition says that
\begin{equation}
\sum_{u \in N(v)} x_u = 0 \quad \text{for }v \in \mathcal{V}_{\ell}.
\end{equation}
Therefore,
\begin{equation}
\sum_{v \in \mathcal{V}_\ell} \sum_{u \in N(v)} x_u = \sum_{u \in \mathcal{V}_{\ell'}} d_{\ell'\ell} x_u = 0.
\label{eq:leafy}
\end{equation}
This implies that $$\sum_{u \in \mathcal{V}_{\ell'}} x_u = 0.$$
Hence, the orbit $\mathcal{V}_\ell'$ must contain at least one vertex $v$ with $x_v > 0$ and at least one vertex $w$ with $x_w < 0$.
This implies that there exists a sign-reversing $\alpha \in \Aut(G)$. Within each orbit, $\alpha$ maps vertices with positive entries
in the kernel eigenvector to vertices with negative entries, and vice-versa. Therefore, the cardinalities of these two sets of vertices are equal.
Equation \eqref{eq:allzero} follows by Lemma~\ref{lem:7gen}(b), and the claim about the parity is evident.
\end{proof}

We can now proceed to the proof of the theorem.
\begin{proof}[Proof of Theorem~\ref{thm:norb}]
Let $G$ be a nut graph with $k$ vertex orbits $V(G) = \mathcal{V}_1 \sqcup \mathcal{V}_2 \sqcup \cdots \sqcup \mathcal{V}_k$. 

If the graph $G$ is connected then $\mathfrak{G}(G)$ is also connected.
Connectedness of $\mathfrak{G}(G)$ implies that $o_e(G) \geq o_v(G)-1$ \cite{Buset1985}, since a connected graph on 
$k$ vertices has at least $k - 1$ edges and each
edge of $\mathfrak{G}(G)$ gives rise to at least one edge orbit of $G$.
Suppose there are no intra-orbit edges in $G$. In this case $G$ is bipartite if and only if $\mathfrak{G}(G)$ is bipartite. 
But a bipartite graph is not a nut graph.
Hence we have $o_e(G) \ge o_v(G)$. 
 To avoid bipartiteness we can do one of two things: 
\begin{enumerate}[label=(\Roman*)]
\item
We may add intra-orbit edges to one or more vertex orbits. We need only consider addition of one such edge type,
as addition of two or more would already imply $o_e(G) \geq o_v(G) + 1$.
\item
 We may add another type of inter-orbit edge to make an odd cycle in $\mathfrak{G}(G)$. Note that $\mathfrak{G}(G)$ becomes a unicyclic graph.
Again, we do not need to consider addition of more than one edge type.
\end{enumerate}

Suppose that $\mathfrak{G}(G)$ contains a leaf $\mathcal{V}_\ell$ that is an independent set in $G$ (in other words,
there are no intra-orbit edges in $\mathcal{V}_\ell$). Let $\mathcal{V}_\ell'$ be the neighbour of
$\mathcal{V}_\ell$ in $\mathfrak{G}(G)$.
By Lemma~\ref{lem:allzerolemma}, the numbers of positive and negative entries in the kernel eigenvector are equal within any given orbit.
This implies the existence of a sign-reversing automorphism $\alpha \in \Aut(G)$.
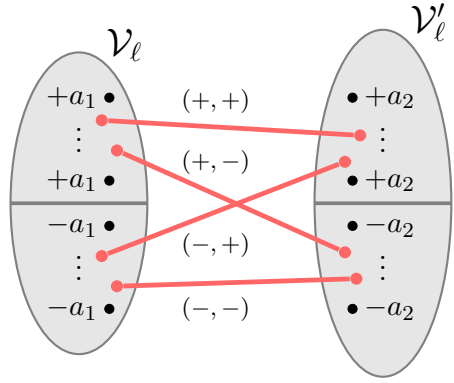
\begin{figure}[!htb]
\centering
\begin{tikzpicture}
\tikzstyle{edge}=[draw,line width=1.4pt]
\tikzstyle{every node}=[draw, circle, fill=black, inner sep=1.2pt]
\draw[thick,color=gray,fill=gray!20!white] (0,0) ellipse (0.9cm and 2cm);
\draw[thick,color=gray,fill=gray!20!white] (4,0.0) ellipse (0.9cm and 2.3cm);
\draw[draw,line width=1.4pt,color=gray] (-0.9, 0) -- (0.9, 0);
\draw[draw,line width=1.4pt,color=gray] (4-0.9, 0) -- (4+0.9, 0);
\node[draw=none,fill=none] at (0.5+0.1, 2.1) {\Large $\mathcal{V}_\ell$};
\node[draw=none,fill=none]  at (4.5+0.1, 2.4) {\Large $\mathcal{V}_\ell'$};
\node[label=0:{$+a_2$}] (v1) at (3.6,1.4) {}; 
\node[label=0:{$+a_2$}] (v2) at (3.6,0.3) {}; 
\node[label=0:{$-a_2$}] (v3) at (3.6,-1.4) {}; 
\node[label=0:{$-a_2$}] (v4) at (3.6,-0.3) {}; 
\node[draw=none,fill=none] at (4, 0.95) {$\vdots$};
\node[draw=none,fill=none] at (4, -0.75) {$\vdots$};
\node[label=180:{$+a_1$}] (v5) at (0.4,1.4) {}; 
\node[label=180:{$+a_1$}] (v6) at (0.4,0.3) {}; 
\node[label=180:{$-a_1$}] (v7) at (0.4,-1.4) {}; 
\node[label=180:{$-a_1$}] (v8) at (0.4,-0.3) {}; 
\node[draw=none,fill=none] at (0, 0.95) {$\vdots$};
\node[draw=none,fill=none] at (0, -0.75) {$\vdots$};
\node[fill=red!60!white,draw=none,inner sep=1.6pt] (u1) at (3.7,0.9) {}; 
\node[fill=red!60!white,draw=none,inner sep=1.6pt] (u2) at (0.3,1.1) {}; 
\draw[edge,color=red!60!white,line width=1.8pt] (u1) -- (u2);
\node[draw=none,fill=none] at (1.8, 1.35) {\footnotesize $(+,+)$};
\node[fill=red!60!white,draw=none,inner sep=1.6pt] (u3) at (3.5,0.55) {}; 
\node[fill=red!60!white,draw=none,inner sep=1.6pt] (u4) at (0.3,-0.7) {}; 
\draw[edge,color=red!60!white,line width=1.8pt] (u3) -- (u4);
\node[draw=none,fill=none] at (1.8, -0.55) {\footnotesize $(-,+)$};
\node[fill=red!60!white,draw=none,inner sep=1.6pt] (u5) at (3.5,-0.65) {}; 
\node[fill=red!60!white,draw=none,inner sep=1.6pt] (u6) at (0.5,0.7) {}; 
\draw[edge,color=red!60!white,line width=1.8pt] (u5) -- (u6);
\node[draw=none,fill=none] at (1.8, 0.55) {\footnotesize $(+,-)$};
\node[fill=red!60!white,draw=none,inner sep=1.6pt] (u7) at (3.65,-1) {}; 
\node[fill=red!60!white,draw=none,inner sep=1.6pt] (u8) at (0.5,-1.1) {}; 
\draw[edge,color=red!60!white,line width=1.8pt] (u7) -- (u8);
\node[draw=none,fill=none] at (1.8, -1.4) {\footnotesize $(-,-)$};
\end{tikzpicture}
\caption{Vertex orbits $\mathcal{V}_\ell$ and $\mathcal{V}_\ell'$ as defined in the proof of Theorem~\ref{thm:norb},
showing the four possible signatures for edges of type $e_{\ell\ell'}$.}
\label{fig:edge_signatures_2}
\end{figure}
Each edge of type $e_{\ell\ell'}$ can be assigned one of four signatures according to signs of the kernel eigenvector entries for its endvertices (shown schematically in Figure~\ref{fig:edge_signatures_2}). We now consider the action of the automorphism $\alpha$ on edges of each
signature. 
Since $\alpha$ is sign-reversing, edge signatures are swapped: $(+,+) \leftrightarrow (-, -)$ and $(+,-) \leftrightarrow (-, +)$. Hence, edges of
type $e_{\ell\ell'}$ fall into at least two orbits, determined by relative sign of endvertex entries:
a \emph{like} edge is of signature $(+,+)$ or $(-,-)$, while an \emph{unlike} edge
is of signature $(+, -)$ or $(-, +)$.
 By the local condition at a vertex of $\mathcal{V}_\ell$,
the presence of a $(+,+)$ edge implies the presence of a $(+,-)$ edge and vice-versa, hence the two corresponding edge orbits are both non-empty. As there is at least one edge orbit included within $e_{\ell\ell'}$, the number of edge orbits  in $G$ is greater than
the number of its edge types.

This proves case (I) and also case (II) where $\mathfrak{G}(G)$ is a unicyclic graph but not a cycle. If $\mathfrak{G}(G)$ is a cycle (necessarily odd) 
and there are no inter-orbit edges, a different
argument is needed. 
Recall that no automorphism maps a like to an unlike edge (or vice versa), so they cannot be
in the same edge orbit. If $e_{i,i+1}$ contains both like and unlike edges, this immediately implies $o_e(G) \geq o_v(G) + 1$. So, for every $i$, we can assume that $e_{i,i+1}$ contains only like or only unlike edges. Take any vertex $u \in \mathcal{V}_i$. Since there are no intra-edges it has
to be connected to neighbours in $\mathcal{V}_{i - 1}$ via like and neighbours in $\mathcal{V}_{i + 1}$ via unlike edges or vice versa.
Therefore, the edges of $\mathfrak{G}(G)$ can be properly coloured with colours `like' and `unlike'. 
But $\mathfrak{G}(G)$ is an odd cycle, so no such edge colouring
exists. Hence, at least one type $e_{i,i+1}$ contains edges of both kinds.
\end{proof}

Lemma~\ref{lem:allzerolemma} implies that a sign-reversing automorphism exists in a nut graph if at least one vertex orbit $\mathcal{V}_\ell$ is a leaf in $\mathfrak{G}(G)$
 and $\mathcal{V}_\ell$ has no intra-orbit edges. A similar structural result can also be obtained if $\mathfrak{G}(G)$ is an odd cycle and $G$ has no intra-orbit edges. 

\begin{proposition}
\label{prop:8}
Let $G$ be a nut graph with $k$ vertex orbits $V(G) = \mathcal{V}_1 \sqcup \mathcal{V}_2 \sqcup \cdots \sqcup \mathcal{V}_k$
and let \,$\x = [x_1\ \ldots\ x_n]^\intercal \in \ker \mathbf{A}(G)$. 
Suppose that every $\mathcal{V}_\ell$ forms an independent set in $G$ and $\mathfrak{G}(G)$ is an odd cycle. Then for every orbit $\mathcal{V}_i$ of $G$ it holds that
\begin{equation}
\label{eq:allzero}
\sum_{j \in \mathcal{V}_i} x_j = 0.
\end{equation}
Moreover, for each orbit $\mathcal{V}_i$ it holds that $|\{ j \in \mathcal{V}_i : x_j > 0 \}| = |\{ j \in \mathcal{V}_i : x_j < 0 \}|$ and
the size of the orbit $\mathcal{V}_i$ is even.
\end{proposition}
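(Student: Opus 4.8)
The plan is to prove this exactly as the odd‑cycle case of the proof of Theorem~\ref{thm:norb} was handled, merged with the conclusion drawn at the end of Lemma~\ref{lem:allzerolemma}. Concretely, I would first show that $G$ must admit a sign‑reversing automorphism (one with $\x^\alpha = -\x$), and then read off the three claimed consequences orbit by orbit from such an $\alpha$, just as in Lemma~\ref{lem:allzerolemma}.

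For the first step I would invoke the dichotomy of Lemma~\ref{lem:7gen}\,(a): every $\alpha \in \Aut(G)$ is either \emph{sign-preserving} ($\x^\alpha = \x$) or \emph{sign-reversing} ($\x^\alpha = -\x$), and the sign-preserving ones form a subgroup. Suppose, for contradiction, that no sign-reversing automorphism exists. Then for any two vertices $v,w$ in a common orbit $\mathcal{V}_i$ we may pick $\alpha$ with $v^\alpha = w$, whence $x_w = x_{v^\alpha} = (\x^\alpha)_v = x_v$; so $\x$ is constant, say equal to some nonzero $c_i$, on each orbit $\mathcal{V}_i$.

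Next I would bring in the structure of $\mathfrak{G}(G)$. Write the odd cycle as $\mathcal{V}_1\mathcal{V}_2\cdots\mathcal{V}_k\mathcal{V}_1$ with $k$ odd; since each orbit is an independent set, every vertex of $\mathcal{V}_i$ has all of its neighbours in $\mathcal{V}_{i-1}\cup\mathcal{V}_{i+1}$, and by vertex-transitivity on each orbit the numbers $d_{i,i-1}$ and $d_{i,i+1}$ of such neighbours are well defined and (as $\mathcal{V}_{i\pm 1}$ are neighbours of $\mathcal{V}_i$ in $\mathfrak{G}(G)$) strictly positive. The local condition \eqref{eq:localCondition} at a vertex of $\mathcal{V}_i$ then reads $d_{i,i-1}c_{i-1} + d_{i,i+1}c_{i+1} = 0$, forcing $c_{i-1}$ and $c_{i+1}$ to have opposite signs for every $i$. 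Setting $s_i = \sgn c_i$ gives $s_{j+2} = -s_j$ for all $j$ (indices mod $k$), hence $s_{j+2k} = (-1)^k s_j$; since $2k \equiv 0 \pmod k$ and $k$ is odd this yields $s_j = -s_j$, a contradiction. (This also disposes of the degenerate case $\Aut(G) = 1$, where it merely reproves that an odd cycle is not a nut graph.) Therefore a sign-reversing $\alpha \in \Aut(G)$ exists.

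Finally I would extract the statement from such an $\alpha$ exactly as at the close of the proof of Lemma~\ref{lem:allzerolemma}: $\alpha$ permutes each orbit $\mathcal{V}_i$ and sends every $v$ to a vertex with kernel entry $-x_v$, so it restricts to a bijection between $\{j\in\mathcal{V}_i : x_j>0\}$ and $\{j\in\mathcal{V}_i : x_j<0\}$; these have equal size, $|\mathcal{V}_i|$ is even, and since $|x_j|$ is constant on $\mathcal{V}_i$ by Lemma~\ref{lem:7gen}\,(b) it follows that $\sum_{j\in\mathcal{V}_i} x_j = 0$. The only point needing care is the bookkeeping in the parity argument — verifying that the $d_{i,i\pm 1}$ are positive and constant on each orbit, and that the step-two walk around an odd cycle closes up with an odd number of sign changes — but this is routine, and everything else is assembly of results already in hand.
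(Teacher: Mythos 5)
Your proof is correct, but it reaches the conclusion by a genuinely different route from the paper. The paper argues directly: it sets $s_i = \sum_{u \in \mathcal{V}_i} x_u$, observes that the local conditions summed over each orbit give a cyclic linear system (Equation~\eqref{eq:matrixEq}) whose coefficient matrix has nonzero entries only in the two cyclic off-diagonals, and then invokes Lemma~\ref{lem:detLem} to show that the determinant is a sum of two products of positive numbers, hence nonzero; this forces $s_1 = \cdots = s_k = 0$ at once, after which the sign-reversing automorphism and the parity statements are read off exactly as you do in your last paragraph. You instead argue by contradiction: if no sign-reversing automorphism exists, then by Lemma~\ref{lem:7gen}\,(a) the kernel vector is constant on each orbit, the local condition forces $\sgn c_{i+1} = -\sgn c_{i-1}$, and stepping by two around an odd cycle (which visits every orbit, since $\gcd(2,k)=1$) produces an odd number of sign flips returning to the start --- a contradiction. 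Your parity argument is sound, including the positivity and orbit-invariance of the $d_{i,i\pm 1}$ and the fact that all neighbours of a vertex in $\mathcal{V}_i$ lie in $\mathcal{V}_{i-1} \cup \mathcal{V}_{i+1}$. What your route buys is elementarity: no determinant identity is needed, only the dichotomy of Lemma~\ref{lem:7gen} and a colouring-style parity count (essentially the same obstruction the paper uses at the end of the proof of Theorem~\ref{thm:norb} for the odd-cycle case). What the paper's route buys is directness: it obtains $\sum_{j \in \mathcal{V}_i} x_j = 0$ without first having to secure a sign-reversing automorphism, and the determinant computation is reusable machinery. Both are complete proofs of the proposition.
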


\begin{lemma}
\label{lem:detLem}
Let $n$ be an odd integer and let $\mathbf{A} = [a_{i,j}]_{1 \leq i,j \leq n}$ be a $n \times n$ matrix such that $a_{i,j} = 0$ unless
$(i, j) \in \{(1, n), (n, 1)\}$ or $| i - j | = 1$.
Then $\det \mathbf{A} = a_{2,1}a_{3,2} \cdots a_{n,n-1}a_{1,n} + a_{1,2}a_{2,3} \cdots a_{n-1,n}a_{n,1}$.
\end{lemma}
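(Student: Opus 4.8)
The plan is to expand $\det\mathbf{A}$ by the Leibniz formula, $\det\mathbf{A}=\sum_{\sigma\in S_n}\sgn(\sigma)\prod_{i=1}^{n}a_{i,\sigma(i)}$, and to show that the prescribed sparsity pattern annihilates every summand except two. A summand indexed by $\sigma$ is nonzero only when $a_{i,\sigma(i)}\neq 0$ for all $i$, which by hypothesis means that for each $i$ either $|\sigma(i)-i|=1$ or $\{i,\sigma(i)\}=\{1,n\}$. Reading indices in $\mathbb{Z}_n$ so that $1$ and $n$ count as adjacent, this is exactly the condition $\sigma(i)\in\{i-1,i+1\}\pmod{n}$ for every $i$; note in particular that $\sigma$ is fixed-point-free, since $a_{i,i}=0$.

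The heart of the argument is to prove that, because $n$ is odd, the only permutations satisfying $\sigma(i)\in\{i-1,i+1\}$ for all $i$ are the two rotations $\rho^{+}\colon i\mapsto i+1$ and $\rho^{-}\colon i\mapsto i-1$ of $\mathbb{Z}_n$. I would argue as follows: write $\sigma(i)=i+\varepsilon(i)$ with $\varepsilon(i)\in\{+1,-1\}$. If $\varepsilon(i)=+1$ then $\sigma(i)=i+1$, so injectivity of $\sigma$ rules out $\sigma(i+2)=i+1$, i.e.\ it rules out $\varepsilon(i+2)=-1$, forcing $\varepsilon(i+2)=+1$. Hence the set $\{\,i:\varepsilon(i)=+1\,\}$ is invariant under adding $2$; since $n$ is odd, $2$ generates $\mathbb{Z}_n$, so this set is either empty or all of $\mathbb{Z}_n$, i.e.\ $\varepsilon$ is constant and $\sigma\in\{\rho^{+},\rho^{-}\}$.

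It then remains to evaluate the two surviving summands. Each of $\rho^{+}$ and $\rho^{-}$ is a single $n$-cycle, so $\sgn(\rho^{\pm})=(-1)^{n-1}=+1$ because $n$ is odd. For $\rho^{+}$ the product $\prod_i a_{i,\sigma(i)}$ equals $a_{1,2}a_{2,3}\cdots a_{n-1,n}a_{n,1}$, and for $\rho^{-}$ it equals $a_{2,1}a_{3,2}\cdots a_{n,n-1}a_{1,n}$; adding the two yields precisely the asserted value of $\det\mathbf{A}$. The one place where oddness of $n$ is indispensable is the invariance-under-$+2$ step (and, correspondingly, the sign computation): for even $n$ one would additionally retain the two ``alternating'' involutions $\sigma$, and the stated identity would fail, so this is the step to handle with care — everything else is routine bookkeeping. (One should also tacitly take $n\geq 3$, since for $n=1$ the corner positions $(1,n)$ and $(n,1)$ coincide and the displayed formula double-counts.)
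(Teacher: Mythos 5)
Your proof is correct and follows the same route as the paper: expand $\det\mathbf{A}$ by the Leibniz formula and observe that the sparsity pattern kills every term except the two cyclic rotations. The paper merely asserts that only $\sigma\in\{(1\,2\,\ldots\,n),(1\,n\,n-1\,\ldots\,2)\}$ survive, whereas you actually prove this (via the parity/injectivity argument showing $\varepsilon$ is constant because $2$ generates $\mathbb{Z}_n$ for odd $n$) and carry out the sign computation, so your write-up is a more complete version of the same argument.
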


\begin{proof}
Recall that by definition
\begin{equation}
\det \mathbf{A} = \sum_{\sigma \in S_n} \left( \sgn(\sigma) \prod_{i=1}^{n} a_{i,\sigma(i)} \right),
\end{equation}
where $S_n$ is the set of all permutations of length $n$. Note that the product $\prod_{i=1}^{n} a_{i,\sigma(i)}$ necessarily
contains a zero factor, unless $\sigma \in \{(1\ 2\ 3\ \ldots\ n), (1\ n\ n-1\ \ldots\ 2)\}$.
\end{proof}

\begin{proof}[Proof of Proposition~\ref{prop:8}]
If necessary, relabel the orbits $\mathcal{V}_1, \mathcal{V}_2, \ldots, \mathcal{V}_k$, so that 
$\mathcal{V}_i$ and $\mathcal{V}_{i + 1}$ are neighbours in the cycle $\mathfrak{G}(G)$.
Let $d_{ij}$ be the number of neighbours of a vertex $v \in \mathcal{V}_i$ that reside in $\mathcal{V}_j$. The local condition 
gives us one equation for each orbit, namely
\begin{equation}
 \sum_{v \in \mathcal{V}_i}\sum_{u \in N(v)} x_u = \sum_{u \in \mathcal{V}_{i-1}}  d_{i-1,i} x_u + \sum_{u \in \mathcal{V}_{i+1}} d_{i+1,i} x_u = 0,
\qquad(1 \leq i \leq k)
\end{equation}
where we consider indices modulo $k$. Let us define $s_i = \sum_{u \in \mathcal{V}_{i}} x_u$ for $i = 1, \ldots, k$. We have 
the matrix equation
\begin{equation}
\label{eq:matrixEq}
\begin{bmatrix}
0 & d_{3,2} & 0 & \ldots & 0 & d_{k,1}\\
d_{1,2} & 0 & d_{4,3} & \ddots & \vdots & 0 \\
0 & d_{2,3} & 0 & d_{5,4} & 0 & \vdots \\
\vdots & 0 & d_{3,4} & 0 & \ddots & 0 \\
0 & \vdots & \ddots & \ddots & \ddots & d_{1,k} \\
d_{2, 1} & 0 & \ldots & 0 & d_{k-1,k} & 0 
\end{bmatrix}
\begin{bmatrix}
s_1 \\
s_2 \\
\vdots \\
\vdots \\
\vdots \\
s_k
\end{bmatrix} = \mathbf{0}_{k\times 1}.
\end{equation}
By Lemma~\ref{lem:detLem}, the determinant of the square matrix in Equation~\eqref{eq:matrixEq} is 
$$d_{2,1}d_{3,2} \cdots d_{k,k-1}d_{1,k} + d_{1,2}d_{2,3} \cdots d_{k-1,k}d_{k,1} > 0,$$ 
since $d_{2,1}, d_{3,2}, \ldots$ are all positive. Hence, $s_1 = s_2 = \cdots = s_k = 0$. This already implies
the existence of a sign-reversing automorphism and the fact that 
$|\{ j \in \mathcal{V}_i : x_j > 0 \}| = |\{ j \in \mathcal{V}_i : x_j < 0 \}|$.
\end{proof}

\section{Vertex transitive nut graphs}

We have seen that $o_e(G) = o_v(G) = 1$ is not possible for a nut graph $G$. 
However,  many other possibilities for $o_e(G)$ may exist. 
First, we filtered out all nut graphs from databases of small vertex-transitive graphs on up to $n \leq 46$ vertices \cite{RH2020,HOLT2019}.
The counts are shown in Table~\ref{table:VTnutstats}. 
Recall that a vertex-transitive graph $G$ is a nut graph if and only if $\eta(G) = 1$, so this search requires only
computation of the nullity and moreover, by Theorem~\ref{thm:four}, can be limited to graphs of even order and degree.
As the table shows, most of these vertex transitive graphs are connected
and a significant proportion of vertex transitive graphs of even order are nut graphs.
As a preliminary survey of symmetry aspects, 
we calculated the number of edge orbits for all vertex-transitive nut graphs; see Table~\ref{tbl:1}, which has a number of interesting features.
It has only zero entries for $o_e(G)=1$, as demanded by Theorem~\ref{thm:main}, but there is no apparent restriction
on the values of $o_e(G)$ that can occur for large enough order of $G$. Note that a vertex-transitive nut graph $G$ with a large $o_e(G)$ must have large degree. 
To place these results in context, we also calculated the number of edge orbits of connected vertex-transitive graphs of even order. See Table~\ref{tbl:1full}.
We see some 
intriguing gaps in Table~\ref{table:VTnutstats} for particular pairs $(n, o_e)$, e.g.\ $(n, o_e) \in \{(22, 3), (22, 5), (22, 7), (22, 10), (22, 11)\}$, even
though 
the numbers of vertex-transitive graphs for these pairs of parameters are $37, 115, 138, 50$ and
$23$, respectively. 

\begin{table}
\centering
\begin{tabular}{|r|r|r|r|r|}
\hline
$n$	& All VT	& Connected VT	& VT nut graphs & Proportion \\
\hline
\hline
8	& 14	& 10	& 1	& 10.00\% \\
10	& 22	& 18	& 1	& 5.56\% \\
12	& 74	& 64	& 4	& 6.25\% \\
14	& 56	& 51	& 5	& 9.80\% \\
16	& 286	& 272	& 20	& 7.35\% \\
18	& 380	& 365	& 23	& 6.30\% \\
20	& 1214	& 1190	& 150	& 12.61\% \\
22	& 816	& 807	& 101	& 12.52\% \\
24	& 15506	& 15422	& 1121	& 7.27\% \\
26	& 4236	& 4221	& 508	& 12.04\% \\
28	& 25850	& 25792	& 4793	& 18.58\% \\
30	& 46308	& 46236	& 3146	& 6.80\% \\
32	& 677402	& 677116	& 47770	& 7.05\% \\
34	& 132580	& 132543	& 14565	& 10.99\% \\
36	& 1963202	& 1962756	& 214391	& 10.92\% \\
38	& 814216	& 814155	& 85234	& 10.47\% \\
40	&13104170	& 13102946	& 1815064	& 13.85\% \\
42	& 9462226	& 9461929	& 693416	& 7.33\% \\
44	& 39134640	& 39133822	& 7376081	& 18.85\% \\
46	& 34333800	& 34333611	& 3281206	& 9.56\% \\
\hline
\end{tabular}
\caption{The number of nut graphs among vertex transitive (VT) graphs on even orders $8 \leq n \leq 46$.
The final column is the ratio between the number of VT nut graphs and the number of connected VT graphs on
a given order, expressed as a percentage.
}
\label{table:VTnutstats}
\end{table}

\begin{sidewaystable}[!p] 
\centering
\footnotesize
\begin{tabular}{|c||rrrrrrrrrrrrrrrrrrrr|}
\hline 
\backslashbox{$o_e$}{$n$}  & 8 & 10 & 12 & 14 & 16 & 18 & 20 & 22 & 24 & 26 & 28 & 30 & 32 & 34 & 36 & 38 & 40 & 42 & 44 & 46 \\
\hline 
\hline 
1 & 0 & 0 & 0 & 0 & 0 & 0 & 0 & 0 & 0 & 0 & 0 & 0 & 0 & 0 & 0 & 0 & 0 & 0 & 0 & 0 \\
2 & 1 & 1 & 2 & 2 & 3 & 4 & 6 & 4 & 6 & 8 & 8 & 7 & 10 & 11 & 15 & 10 & 16 & 14 & 12 & 10 \\
3 & 0 & $0$ & 0 & 0 & 3 & 1 & 3 & 0 & 12 & 7 & 7 & 11 & 39 & 14 & 46 & 5 & 58 & 39 & 21 & 0 \\
4 & $-$ & $0$ & 0 & 2 & 1 & 7 & 9 & 18 & 51 & 36 & 31 & 73 & 118 & 100 & 209 & 142 & 407 & 280 & 186 & 270 \\
5 & $-$ & $-$ & 2 & 0 & 5 & 1 & 20 & 0 & 93 & 0 & 79 & 47 & 332 & 0 & 376 & 0 & 1079 & 341 & 349 & 0 \\
6 & $-$ & $-$ & $-$ & 1 & 6 & 6 & 32 & 38 & 164 & 119 & 277 & 258 & 1175 & 632 & 1604 & 1116 & 4349 & 2244 & 3944 & 3285 \\
7 & $-$ & $-$ & $-$ & $-$ & 2 & 0 & 30 & 0 & 181 & 0 & 306 & 98 & 1457 & 0 & 2414 & 0 & 6854 & 1747 & 4512 & 0 \\
8 & $-$ & $-$ & $-$ & $-$ & $0$ & 0 & 21 & 4 & 131 & 34 & 312 & 171 & 2250 & 600 & 4181 & 1750 & 14674 & 6410 & 12993 & 9870 \\
9 & $-$ & $-$ & $-$ & $-$ & $0$ & 4 & 16 & 32 & 222 & 186 & 756 & 1078 & 4788 & 2363 & 10659 & 6270 & 37144 & 24419 & 44984 & 31680 \\
10 & $-$ & $-$ & $-$ & $-$ & $-$ & $0$ & 9 & 0 & 97 & 5 & 505 & 70 & 5205 & 385 & 10743 & 1750 & 54653 & 11690 & 55182 & 19362 \\
11 & $-$ & $-$ & $-$ & $-$ & $-$ & $-$ & 3 & 0 & 100 & 0 & 924 & 23 & 8242 & 0 & 26197 & 0 & 110092 & 6215 & 198328 & 0 \\
12 & $-$ & $-$ & $-$ & $-$ & $-$ & $-$ & 1 & 5 & 41 & 105 & 755 & 1013 & 8438 & 6042 & 33238 & 27737 & 171053 & 130227 & 373849 & 334181 \\
13 & $-$ & $-$ & $-$ & $-$ & $-$ & $-$ & $-$ & $0$ & 20 & 0 & 476 & 1 & 6536 & 0 & 32259 & 0 & 209056 & 4405 & 509682 & 0 \\
14 & $-$ & $-$ & $-$ & $-$ & $-$ & $-$ & $-$ & $-$ & 3 & 0 & 197 & 0 & 3716 & 7 & 25059 & 140 & 179657 & 5573 & 507008 & 9870 \\
15 & $-$ & $-$ & $-$ & $-$ & $-$ & $-$ & $-$ & $-$ & $0$ & 8 & 110 & 284 & 3249 & 3807 & 28390 & 33151 & 298457 & 258510 & 1071473 & 1034877 \\
16 & $-$ & $-$ & $-$ & $-$ & $-$ & $-$ & $-$ & $-$ & $-$ & $0$ & 39 & 0 & 1238 & 0 & 14434 & 8 & 184646 & 1503  & 599584 & 2460 \\
17 & $-$ & $-$ & $-$ & $-$ & $-$ & $-$ & $-$ & $-$ & $-$ & $-$ & 9 & 0 & 682 & 0 & 13486 & 0 & 213377 & 739  & 1213286 & 0 \\
18 & $-$ & $-$ & $-$ & $-$ & $-$ & $-$ & $-$ & $-$ & $-$ & $-$ & 2 & 12 & 241 & 588 & 6936 & 11982 & 150605 & 198368 & 987503  & 1225073 \\
19 & $-$ & $-$ & $-$ & $-$ & $-$ & $-$ & $-$ & $-$ & $-$ & $-$ & $-$ & $0$ & 52 & 0 & 2790 & 0 & 93113 & 101& 712011 & 0  \\
20 & $-$ & $-$ & $-$ & $-$ & $-$ & $-$ & $-$ & $-$ & $-$ & $-$ & $-$ & $-$ & 2 & 0 & 883 & 0 & 44117 & 3 & 451854 & 10 \\
21 & $-$ & $-$ & $-$ & $-$ & $-$ & $-$ & $-$ & $-$ & $-$ & $-$ & $-$ & $-$ & $0$ & 16 & 379 & 1152 & 27428 & 38386 & 345284 & 528000 \\
22 & $-$ & $-$ & $-$ & $-$ & $-$ & $-$ & $-$ & $-$ & $-$ & $-$ & $-$ & $-$ & $0$ & $0$ & 70 & 0 & 9106 & 0 & 125961 & 0  \\
23 & $-$ & $-$ & $-$ & $-$ & $-$ & $-$ & $-$ & $-$ & $-$ & $-$ & $-$ & $-$ & $-$ & $-$ & 21 & 0 & 4024 & 0 & 102696 & 0 \\
24 & $-$ & $-$ & $-$ & $-$ & $-$ & $-$ & $-$ & $-$ & $-$ & $-$ & $-$ & $-$ & $-$ & $-$ & 2 & 21 & 845 & 2171 & 37414  & 78705 \\
25 & $-$ & $-$ & $-$ & $-$ & $-$ & $-$ & $-$ & $-$ & $-$ & $-$ & $-$ & $-$ & $-$ & $-$ & $-$ & $0$ & 225 & 0 & 12892 & 0  \\
26 & $-$ & $-$ & $-$ & $-$ & $-$ & $-$ & $-$ & $-$ & $-$ & $-$ & $-$ & $-$ & $-$ & $-$ & $-$ & $-$ & 28 & 0 & 3862 & 0  \\
27 & $-$ & $-$ & $-$ & $-$ & $-$ & $-$ & $-$ & $-$ & $-$ & $-$ & $-$ & $-$ & $-$ & $-$ & $-$ & $-$ & 1 & 31  & 972 & 3520  \\
28 & $-$ & $-$ & $-$ & $-$ & $-$ & $-$ & $-$ & $-$ & $-$ & $-$ & $-$ & $-$ & $-$ & $-$ & $-$ & $-$ & $-$ & $0$  & 206  & 0  \\
29 & $-$ & $-$ & $-$ & $-$ & $-$ & $-$ & $-$ & $-$ & $-$ & $-$ & $-$ & $-$ & $-$ & $-$ & $-$ & $-$ & $-$ & $-$  & 29 & 0  \\
30 & $-$ & $-$ & $-$ & $-$ & $-$ & $-$ & $-$ & $-$ & $-$ & $-$ & $-$ & $-$ & $-$ & $-$ & $-$ & $-$ & $-$ & $-$  & 4 & 33  \\
31 & $-$ & $-$ & $-$ & $-$ & $-$ & $-$ & $-$ & $-$ & $-$ & $-$ & $-$ & $-$ & $-$ & $-$ & $-$ & $-$ & $-$ & $-$ & $-$ & $0$ \\
\hline 
\hline 
$\Sigma$ & $1$ & $1$ & $4$ & $5$ & $20$ & $23$ & $150$ & $101$ & $1121$ & $508$ & $4793$ & $3146$ & $47770$ & $14565$ & $214391$ & $85234$ & $1815064$ & $693416$ & $7376081$ & $3281206$ \\
\hline 
\end{tabular}
\caption{The number of vertex-transitive nut graphs of the given order $n$ and number of edge orbits $o_e$.}
\label{tbl:1}
\end{sidewaystable}

\begin{sidewaystable}[!p] 
\centering
\footnotesize
\begin{tabular}{|c||rrrrrrrrrrrrrrrrrrrr|}
\hline 
\backslashbox{$o_e$}{$n$}  & 8 & 10 & 12 & 14 & 16 & 18 & 20 & 22 & 24 & 26 & 28 & 30 & 32 & 34 & 36 & 38 & 40 & 42 & 44 & 46 \\
\hline 
\hline 
1 & 5 & 8 & 11 & 8 & 15 & 14 & 22 & 8 & 34 & 13 & 26 & 41 & 42 & 10 & 69 & 10 & 71 & 56 & 16 & 7 \\
2 & 4 & 6 & 24 & 11 & 34 & 53 & 79 & 15 & 249 & 38 & 101 & 263 & 334 & 42 & 585 & 37 & 645 & 398 & 104 & 27 \\
3 & 1 & 3 & 18 & 12 & 60 & 72 & 123 & 37 & 629 & 85 & 208 & 598 & 1386 & 146 & 2263 & 146 & 2588 & 1340 & 428 & 174 \\
4 & $-$ & 1 & 7 & 11 & 59 & 68 & 163 & 75 & 1086 & 177 & 422 & 1147 & 4165 & 441 & 5731 & 515 & 7726 & 4008 & 1718 & 878 \\
5 & $-$ & $-$ & 4 & 7 & 44 & 65 & 185 & 115 & 1604 & 305 & 809 & 1956 & 9684 & 1093 & 12238 & 1559 & 19585 & 10969 & 5950 & 3508 \\
6 & $-$ & $-$ & $-$ & 2 & 32 & 47 & 184 & 139 & 2084 & 439 & 1354 & 2971 & 18878 & 2283 & 23383 & 3957 & 44631 & 26864 & 17803 & 11462 \\
7 & $-$ & $-$ & $-$ & $-$ & 20 & 27 & 169 & 138 & 2352 & 549 & 2041 & 4079 & 32496 & 4153 & 41461 & 8666 & 93741 & 58552 & 47038 & 31930 \\
8 & $-$ & $-$ & $-$ & $-$ & 7 & 13 & 132 & 116 & 2320 & 614 & 2828 & 5055 & 50069 & 6753 & 69326 & 16742 & 181948 & 114599 & 112933 & 78261 \\
9 & $-$ & $-$ & $-$ & $-$ & 1 & 5 & 82 & 83 & 2000 & 609 & 3537 & 5746 & 69333 & 10036 & 109094 & 29248 & 326634 & 203239 & 250167 & 173735 \\
10 & $-$ & $-$ & $-$ & $-$ & $-$ & 1 & 38 & 50 & 1473 & 539 & 3917 & 6020 & 86177 & 13705 & 159508 & 46871 & 541098 & 331751 & 514525 & 355837 \\
11 & $-$ & $-$ & $-$ & $-$ & $-$ & $-$ & 12 & 23 & 911 & 410 & 3746 & 5777 & 95517 & 17019 & 213153 & 68842 & 825026 & 503981 & 976094 & 676704 \\
12 & $-$ & $-$ & $-$ & $-$ & $-$ & $-$ & 1 & 7 & 460 & 257 & 3025 & 4932 & 93632 & 18890 & 256311 & 91670 & 1151013 & 713859 & 1691134 & 1189545 \\
13 & $-$ & $-$ & $-$ & $-$ & $-$ & $-$ & $-$ & 1 & 171 & 127 & 2031 & 3642 & 80492 & 18413 & 273869 & 109238 & 1460133 & 936712 & 2648865 & 1914675 \\
14 & $-$ & $-$ & $-$ & $-$ & $-$ & $-$ & $-$ & $-$ & 44 & 47 & 1101 & 2256 & 60195 & 15560 & 257721 & 115210 & 1673574 & 1126073 & 3725223 & 2795661 \\
15 & $-$ & $-$ & $-$ & $-$ & $-$ & $-$ & $-$ & $-$ & 5 & 11 & 471 & 1140 & 38790 & 11276 & 211956 & 106655 & 1723949 & 1226341 & 4683075 & 3677457 \\
16 & $-$ & $-$ & $-$ & $-$ & $-$ & $-$ & $-$ & $-$ & $-$ & 1 & 144 & 453 & 21253 & 6940 & 151375 & 86183 & 1588363 & 1198385 & 5250045 & 4339178 \\
17 & $-$ & $-$ & $-$ & $-$ & $-$ & $-$ & $-$ & $-$ & $-$ & $-$ & 29 & 133 & 9722 & 3583 & 93176 & 60473 & 1302895 & 1043286 & 5240659 & 4580946 \\
18 & $-$ & $-$ & $-$ & $-$ & $-$ & $-$ & $-$ & $-$ & $-$ & $-$ & 2 & 25 & 3634 & 1524 & 49001 & 36624 & 946942 & 804481 & 4652068 & 4319948 \\
19 & $-$ & $-$ & $-$ & $-$ & $-$ & $-$ & $-$ & $-$ & $-$ & $-$ & $-$ & 2 & 1064 & 518 & 21711 & 18997 & 606265 & 546641 & 3666813 & 3633698 \\
20 & $-$ & $-$ & $-$ & $-$ & $-$ & $-$ & $-$ & $-$ & $-$ & $-$ & $-$ & $-$ & 223 & 133 & 7941 & 8344 & 339719 & 325520 & 2560541 & 2721551 \\
21 & $-$ & $-$ & $-$ & $-$ & $-$ & $-$ & $-$ & $-$ & $-$ & $-$ & $-$ & $-$ & 28 & 23 & 2309 & 3043 & 165034 & 168737 & 1579317 & 1810729 \\
22 & $-$ & $-$ & $-$ & $-$ & $-$ & $-$ & $-$ & $-$ & $-$ & $-$ & $-$ & $-$ & 2 & 2 & 501 & 893 & 68723 & 75450 & 856413 & 1066672 \\
23 & $-$ & $-$ & $-$ & $-$ & $-$ & $-$ & $-$ & $-$ & $-$ & $-$ & $-$ & $-$ & $-$ & $-$ & 70 & 200 & 24022 & 28725 & 405904 & 553828 \\
24 & $-$ & $-$ & $-$ & $-$ & $-$ & $-$ & $-$ & $-$ & $-$ & $-$ & $-$ & $-$ & $-$ & $-$ & 4 & 30 & 6862 & 9119 & 166550 & 251863 \\
25 & $-$ & $-$ & $-$ & $-$ & $-$ & $-$ & $-$ & $-$ & $-$ & $-$ & $-$ & $-$ & $-$ & $-$ & $-$ & 2 & 1507 & 2332 & 58439 & 99445 \\
26 & $-$ & $-$ & $-$ & $-$ & $-$ & $-$ & $-$ & $-$ & $-$ & $-$ & $-$ & $-$ & $-$ & $-$ & $-$ & $-$ & 234 & 451 & 17128 & 33657 \\
27 & $-$ & $-$ & $-$ & $-$ & $-$ & $-$ & $-$ & $-$ & $-$ & $-$ & $-$ & $-$ & $-$ & $-$ & $-$ & $-$ & 18 & 57 & 4061 & 9574 \\
28 & $-$ & $-$ & $-$ & $-$ & $-$ & $-$ & $-$ & $-$ & $-$ & $-$ & $-$ & $-$ & $-$ & $-$ & $-$ & $-$ & $-$ & 3 & 721 & 2216 \\
29 & $-$ & $-$ & $-$ & $-$ & $-$ & $-$ & $-$ & $-$ & $-$ & $-$ & $-$ & $-$ & $-$ & $-$ & $-$ & $-$ & $-$ & $-$ & 86 & 394 \\
30 & $-$ & $-$ & $-$ & $-$ & $-$ & $-$ & $-$ & $-$ & $-$ & $-$ & $-$ & $-$ & $-$ & $-$ & $-$ & $-$ & $-$ & $-$ & 4 & 48 \\
31 & $-$ & $-$ & $-$ & $-$ & $-$ & $-$ & $-$ & $-$ & $-$ & $-$ & $-$ & $-$ & $-$ & $-$ & $-$ & $-$ & $-$ & $-$ & $-$ & 3 \\
\hline 
\hline 
$\Sigma$ & 10 & 18 & 64 & 51 & 272 & 365 & 1190 & 807 & 15422 & 4221 & 25792 & 46236 & 677116 & 132543 & 1962756 & 814155 & 13102946 & 9461929 & 39133822 & 34333611 \\
\hline 
\end{tabular}
\caption{The number of connected vertex-transitive graphs of the given order $n$, $8 \leq n \leq 46$ even, and number of edge orbits $o_e$.}
\label{tbl:1full}
\end{sidewaystable}

\subsection{Families with \texorpdfstring{$\boldsymbol{(o_v, o_e) = (1, 2)}$}{(o\_v, o\_e) = (1, 2)}}
\label{subsec:fam12}

From the line for $o_e = 2$ in Table~\ref{tbl:1} it appears likely that vertex transitive nut graphs with two edge orbits exist for all feasible orders.
This is confirmed by the next theorem.

\begin{theorem}
\label{thm:fam1_2}
For every even $n \geq 8$, there exists a nut graph $G$ with $o_v(G) = 1$ and $o_e(G) = 2$.
\end{theorem}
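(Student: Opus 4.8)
The plan is to realise every required $G$ as a $4$-regular circulant. Fix an even $n\ge 8$. Since circulants are vertex transitive, any such $G$ will automatically have $o_v(G)=1$; and by Theorem~\ref{thm:four} a $4$-regular vertex-transitive nut graph can exist only for even $n\ge 8$, so degree $4$ is exactly the right target. I would take $G=\Circ(\mathbb{Z}_n,\{a,b\})$ with $1\le a<b<n/2$ still to be chosen, so that $G$ is $4$-regular, with the two edge layers $E_a=\{\,\{k,k+a\}\mid k\in\mathbb{Z}_n\,\}$ and $E_b=\{\,\{k,k+b\}\mid k\in\mathbb{Z}_n\,\}$. Because $a\neq b$ and $a+b<n$, these are two distinct orbits of the regular cyclic subgroup $\mathbb{Z}_n\le\Aut(G)$, hence $o_e(G)\le 2$. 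Combined with the main theorem (Theorem~\ref{thm:norb}), which gives $o_e(G)\ge o_v(G)+1=2$, this forces $o_e(G)=2$ as soon as $G$ is shown to be a nut graph — so no computation of the full automorphism group is needed.

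Thus the real work is to choose $a,b$ making $G$ a nut graph. The eigenvalues of $G$ are $\lambda_j=2\cos(2\pi ja/n)+2\cos(2\pi jb/n)$ for $j\in\mathbb{Z}_n$, which I would rewrite with a sum-to-product identity as
\[
\lambda_j=4\cos\!\Bigl(\tfrac{\pi j(a+b)}{n}\Bigr)\cos\!\Bigl(\tfrac{\pi j(b-a)}{n}\Bigr).
\]
For an odd integer $c$, the equation $\cos(\pi jc/n)=0$ (equivalently $2jc\equiv n\pmod{2n}$) has exactly $\gcd(c,n)$ solutions $j$ in $\{0,1,\dots,n-1\}$, all of them nonzero. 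Hence, if I arrange $\gcd(a+b,n)=\gcd(b-a,n)=1$ — which, as $n$ is even, forces both $a+b$ and $b-a$ to be odd — then each cosine factor vanishes for exactly one index $j$; and since $\cos(\pi(a+b)/2)=\cos(\pi(b-a)/2)=0$, that index is $j=n/2$ in both cases. Therefore $\lambda_j=0$ only for $j=n/2$, so $\eta(G)=1$, and the corresponding eigenvector is $((-1)^k)_{k\in\mathbb{Z}_n}$, which has no zero entry; thus $G$ is a nut graph. Connectedness is free, since $\gcd(a,b,n)\mid\gcd(a+b,n)=1$.

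It remains to produce, for every even $n\ge 8$, integers $1\le a<b<n/2$ with $b-a=1$ and $\gcd(a+b,n)=1$. Equivalently, I need an odd unit $w$ of $\mathbb{Z}_n$ with $3\le w\le n-3$, and then set $a=(w-1)/2$, $b=(w+1)/2$ (so $a+b=w$, $b-a=1$, and $b=(w+1)/2\le n/2-1$). Such a $w$ exists because $\phi(n)\ge 4$ for every even $n\ge 8$, so $\mathbb{Z}_n^{\ast}$ contains an element other than $1$ and $n-1$; since $n$ is even, any such element is odd and may be taken in $\{3,5,\dots,n-3\}$. (Concretely, $\Circ(\mathbb{Z}_n,\{1,2\})$ already works whenever $3\nmid n$, and $w=5,7,\dots$ cover the remaining residues.) This completes the construction.

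The step I expect to need the most care is the spectral bookkeeping in the second paragraph: one must make sure the zero eigenvalue is pinned to the single index $j=n/2$ rather than occurring at some conjugate pair $\{j,n-j\}$ with $j\neq n/2$, which would push the nullity up to $2$ and destroy the nut property. Everything else — the $4$-regularity, the two edge layers, and the elementary number theory guaranteeing a suitable $w$ — is routine.
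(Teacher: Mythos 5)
Your proof is correct, but it takes a genuinely different route from the paper. The paper covers the even orders $n\geq 8$ by three separate quartic vertex-transitive families chosen according to the residue of $n$: antiprisms $A_\ell$ for $n\not\equiv 0\pmod 6$ (citing Proposition~\ref{prop:fam1_2_a}), the Cartesian products $C_3\cart C_\ell$ for $6\mid n$, $18\nmid n$ (via the spectrum of a product), and the twisted products $C_3\twist C_\ell$ for $18\mid n$ (via a voltage-graph decomposition of the characteristic polynomial and a hands-on rank computation). You instead use a single uniform family of quartic circulants $\Circ(\mathbb{Z}_n,\{a,a+1\})$ with $2a+1$ a unit mod $n$, proving the nut property directly from the product form $\lambda_j=4\cos(\pi j(a+b)/n)\cos(\pi j(b-a)/n)$ of the circulant eigenvalues and pinning the unique zero eigenvalue to $j=n/2$ by the coprimality conditions; the existence of a suitable connection set then reduces to $\phi(n)>2$, which holds for all $n\geq 7$. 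This is more elementary and self-contained (it avoids the voltage-graph machinery entirely) and it connects naturally to the circulant nut graph literature cited in the introduction. A further nice touch is your derivation of $o_e(G)=2$: the regular cyclic subgroup already splits $E(G)$ into the two layers $E_a$ and $E_b$, giving $o_e\leq 2$, and Theorem~\ref{thm:norb} supplies the matching lower bound $o_e\geq o_v+1=2$ — an argument the paper leaves implicit for its three families. The one step worth stating with slightly more care is your counting claim for solutions of $\cos(\pi jc/n)=0$: in general the count is $\gcd(c,n)$ only when $2\gcd(c,n)$ divides $n$ (and zero otherwise), but since you apply it with $c$ odd, $\gcd(c,n)=1$ and $n$ even, the hypothesis is satisfied and the conclusion stands.
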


To prove this, we provide three families of quartic vertex-transitive graphs, which together cover all feasible orders and are described in Propositions~\ref{prop:fam1_2_a} to~\ref{prop:fam1_2_c}.
For the first family, let $A_\ell$, where $\ell \geq 3$, be the antiprism on $2\ell$ vertices. Gauci et al.\ \cite{GPS} proved the following proposition.

\begin{proposition}[\cite{GPS}]
\label{prop:fam1_2_a}
The antiprism graph $A_\ell$ of order $2\ell$ is a nut graph if and only if $2\ell \not\equiv 0 \pmod 6$.
\end{proposition}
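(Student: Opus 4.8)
The plan is to use the fact that $A_\ell$ is a circulant graph and to read off its nullity directly from the spectrum. First I would fix the labelling $V(A_\ell) = \mathbb{Z}_{2\ell}$ that interleaves the two $\ell$-gons: send the $i$-th vertex of the outer $\ell$-cycle to $2i$ and the $i$-th vertex of the inner $\ell$-cycle to $2i+1$. Under this labelling the outer-cycle edges join $2i$ to $2i+2$, the inner-cycle edges join $2i+1$ to $2i+3$, and the rungs join $2i$ to $2i\pm 1$, so the edge set is exactly the set of pairs at difference $1$ or $2$ in $\mathbb{Z}_{2\ell}$. Hence $A_\ell \cong \Circ(\mathbb{Z}_{2\ell}, \{1,2\})$, a quartic circulant.

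Next, I would invoke the standard diagonalisation of circulants: with $n = 2\ell$ and $\omega = e^{2\pi i/n}$, the eigenvalues of $\mathbf{A}(A_\ell)$ are $\lambda_j = 2\cos(2\pi j/n) + 2\cos(4\pi j/n)$ for $j = 0, 1, \dots, n-1$, with $[1\ \omega^j\ \omega^{2j}\ \cdots\ \omega^{(n-1)j}]^\intercal$ a corresponding eigenvector. Writing $\theta = 2\pi j/n$, the condition $\lambda_j = 0$ becomes $\cos\theta + \cos 2\theta = 0$; substituting $\cos 2\theta = 2\cos^2\theta - 1$ and factoring the quadratic as $(2\cos\theta - 1)(\cos\theta + 1) = 0$ shows that $\lambda_j = 0$ iff $\cos\theta = \tfrac12$ or $\cos\theta = -1$. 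The root $\cos\theta = -1$ gives $\theta = \pi$, i.e.\ $j = \ell$, which is an admissible index for every $\ell$; the root $\cos\theta = \tfrac12$ gives $\theta \in \{\pi/3, 5\pi/3\}$, i.e.\ $j \in \{\ell/3, 5\ell/3\}$, and these are integers precisely when $3 \mid \ell$, equivalently $2\ell \equiv 0 \pmod 6$.

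From here both implications fall out. If $2\ell \not\equiv 0 \pmod 6$, then $j = \ell$ is the only index with $\lambda_j = 0$, so $\eta(A_\ell) = 1$; since $\omega^\ell = e^{i\pi} = -1$, the (up to scaling unique) kernel eigenvector is $\x = [1\ {-1}\ 1\ {-1}\ \cdots\ 1\ {-1}]^\intercal$, which has no zero entry --- equivalently, one checks directly that this $\pm 1$ vector satisfies the local condition \eqref{eq:localCondition}, because every vertex has two neighbours at offsets $\pm 1$ carrying the opposite sign and two neighbours at offsets $\pm 2$ carrying the same sign --- so $A_\ell$ is a nut graph. If instead $2\ell \equiv 0 \pmod 6$, then $\ell$, $\ell/3$ and $5\ell/3$ are three distinct indices with vanishing eigenvalue, so $\eta(A_\ell) \geq 3$ and $A_\ell$ cannot be a nut graph. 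The computation is routine; the only genuine care-points are (i) confirming the circulant presentation, which is what pins the connection set down to $\{1,2\}$, and (ii) not stopping at $\eta(A_\ell)=1$ but also verifying that the kernel vector is full. A reader wishing to bypass the circulant machinery could instead block-diagonalise $\mathbf{A}(A_\ell)$ via the discrete Fourier transform on each copy of $\mathbb{Z}_\ell$, reducing to $2\times 2$ blocks whose determinant vanishes under exactly the stated congruence; the arithmetic is slightly longer but leads to the same conclusion.
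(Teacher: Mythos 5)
Your argument is correct and complete. The paper itself offers no proof of this proposition---it is imported verbatim from \cite{GPS}---so there is nothing to compare against line by line; but your self-contained derivation is exactly the natural one and is consistent with the spectral style the paper uses for the neighbouring families (Propositions~\ref{prop:fam1_2_b} and~\ref{prop:fam1_2_c}). The two care-points you flag are indeed the only ones: the interleaved labelling correctly identifies $A_\ell$ with $\Circ(\mathbb{Z}_{2\ell},\{1,2\})$, the factorisation $2\cos^2\theta+\cos\theta-1=(2\cos\theta-1)(\cos\theta+1)$ isolates the zero eigenvalues at $j=\ell$ (always) and $j\in\{\ell/3,\,5\ell/3\}$ (precisely when $3\mid\ell$, and these three indices are pairwise distinct for $\ell\ge 3$, giving $\eta\ge 3$), and the alternating $\pm1$ vector attached to $j=\ell$ is visibly full, which settles the nut property in the remaining case. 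One could remark that fullness also follows for free from Lemma~\ref{lem:7}\ref{lem7_claimc} once $\eta=1$ is known for a vertex-transitive graph, but your direct check of the local condition is just as good.
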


The next family is composed of cartesian products.\begin{proposition}
\label{prop:fam1_2_b}
The graph $C_3 \cart C_\ell$ of order $3\ell$ is a nut graph for even $\ell \geq 4$ such that $\ell \not\equiv 0 \pmod 6$.
\end{proposition}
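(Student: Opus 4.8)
The plan is to compute the spectrum of $C_3 \cart C_\ell$ explicitly from the spectra of its factors. Recall the standard fact that the eigenvalues of a Cartesian product $G \cart H$ are precisely the sums $\mu + \nu$, where $\mu$ runs over the eigenvalues of $\mathbf{A}(G)$ and $\nu$ over the eigenvalues of $\mathbf{A}(H)$, the multiplicities multiplying within each such pair; moreover the corresponding eigenvectors may be taken to be the tensor products $\mathbf{u} \otimes \mathbf{w}$ of eigenvectors $\mathbf{u}$ of $\mathbf{A}(G)$ and $\mathbf{w}$ of $\mathbf{A}(H)$. The spectrum of $C_3$ is $\{2, -1, -1\}$, with the all-ones vector $[1\ 1\ 1]^\intercal$ spanning the eigenspace for the eigenvalue $2$, and the spectrum of $C_\ell$ is $\{\, 2\cos(2\pi k/\ell) : k = 0, 1, \ldots, \ell - 1 \,\}$.

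Next I would determine exactly when a sum $\mu + \nu$ vanishes. Since the eigenvalues of $C_3$ are $2$ (simple) and $-1$ (of multiplicity two), a zero eigenvalue of $C_3 \cart C_\ell$ can arise only from $\mu = 2$ together with $\nu = -2$, or from $\mu = -1$ together with $\nu = 1$. Now $2\cos(2\pi k/\ell) = -2$ holds exactly when $k = \ell/2$, which is an integer — and then the unique such $k$ — precisely when $\ell$ is even; and $2\cos(2\pi k/\ell) = 1$ holds exactly when $k \in \{\ell/6,\, 5\ell/6\}$, which requires $6 \mid \ell$. Hence, under the hypotheses that $\ell$ is even and $\ell \not\equiv 0 \pmod 6$, the value $-2$ is a simple eigenvalue of $C_\ell$ while $1$ is not an eigenvalue of $C_\ell$ at all, so $0$ is an eigenvalue of $C_3 \cart C_\ell$ of multiplicity exactly one, i.e.\ $\eta(C_3 \cart C_\ell) = 1$.

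It then remains to check that the kernel eigenvector is full. By the description of the eigenvectors of a Cartesian product, the one-dimensional kernel of $\mathbf{A}(C_3 \cart C_\ell)$ is spanned by $[1\ 1\ 1]^\intercal \otimes \mathbf{w}$, where $\mathbf{w}$ is the eigenvector of $\mathbf{A}(C_\ell)$ for the eigenvalue $-2$, namely the alternating vector with entries $(-1)^k$, $k = 0, \ldots, \ell - 1$. Thus the entry of this kernel vector at a vertex $(i, j)$ equals $(-1)^j \in \{+1, -1\}$; in particular every entry is non-zero, so $C_3 \cart C_\ell$ is a nut graph, and since $\ell \geq 4$ its order is $3\ell \geq 12 \geq 7$, making it a non-trivial nut graph.

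I do not expect any serious obstacle here: the only points requiring care are the multiplicity bookkeeping — confirming that for $\ell$ even but not a multiple of $6$ the eigenvalue $-2$ of $C_\ell$ is simple and $1$ is absent, so that the doubled eigenvalue $-1$ of $C_3$ contributes no further zeros — and the observation that the tensor-product kernel vector automatically has all entries $\pm 1$. For completeness one may additionally record that when $6 \mid \ell$ the eigenvalue $1$ of $C_\ell$ has multiplicity two, so $0$ then becomes an eigenvalue of $C_3 \cart C_\ell$ of multiplicity at least $2 \cdot 2 = 4$ and the graph fails to be a nut graph, which shows that the congruence restriction is genuinely needed.
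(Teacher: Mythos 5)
Your proposal is correct and follows essentially the same route as the paper: decompose the spectrum of $C_3 \cart C_\ell$ as pairwise sums of eigenvalues of the factors, check that under the stated hypotheses the only vanishing sum is $2 + (-2)$ with multiplicity one, and observe that the corresponding tensor-product kernel vector has all entries $\pm 1$. Your version merely spells out the trigonometric bookkeeping (when $2\cos(2\pi k/\ell)$ equals $-2$ or $1$) more explicitly than the paper does.
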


\begin{proof}
It is known that $\sigma(G \cart H) = \{ \lambda + \mu \mid \lambda \in \sigma(G) \text{ and } \mu \in \sigma(H) \}$ (see \cite[Section 1.4.6]{haemers}).
Moreover, let $\x_G$ be an eigenvector for an eigenvalue $\lambda \in \sigma(G)$ and let $\x_H$ be an eigenvector for an eigenvalue $\mu \in \sigma(H)$. Then $\x_{G\cart H}$, defined as $\x_{G \cart H}((u, v)) = \x_G(u) \x_H(v)$, is an eigenvector for the eigenvalue $\lambda + \mu$.
It is also well known that $\sigma(C_\ell) = \{ 2\cos(2\pi j / \ell) \mid 0 \leq j < \ell \}$ (see \cite[Section~1.4.3]{haemers}). In particular, $\sigma(C_3) = \{2, -1, -1\}$ and when $\ell$ is even and $\ell \not\equiv 0 \pmod 6$, it is clear that $\sigma(C_\ell)$ contains $-2$ with multiplicity $1$, but not $1$. Therefore,
$C_3 \cart C_\ell$ contains a $0$ eigenvalue with multiplicity $1$. As the eigenvector of $C_\ell$ for the eigenvalue $-2$ is full and so is the eigenvector of $C_3$ for the eigenvalue $2$, it immediately follows that $C_3 \cart C_\ell$ is a nut graph.
\end{proof}

For the third family, a variation on the cartesian product is
used. 
Suppose that vertices of $C_\ell$ are labeled $0, 1, \ldots \ell - 1$ such that $i$ and $i + 1$ are adjacent (indices modulo $\ell$). Then the \emph{ twisted
product} of $C_k$ and $C_\ell$, denoted $C_k \twist C_\ell$, has the vertex set $V(C_k \twist C_\ell) = V(C_k \cart C_\ell)$ and the edge set
$$E(C_k \twist C_\ell) = E(C_k \cart C_\ell) \setminus \{ (i, 0)(i, 1) \mid 0 \leq i < k \} \cup \{ (i, 0)((i + 1) \bmod k, 1) \mid 0 \leq i < k \}.$$
In other words, the construction $C_k \twist C_\ell$, is similar to the cartesian product of $C_k$ and $C_\ell$, but with a twist introduced between the first two $C_k$ layers.

\begin{proposition}
\label{prop:fam1_2_c}
The graph $C_3 \twist C_\ell$ of order $3\ell$ is a nut graph for even $\ell \geq 6$ such that $\ell \equiv 0 \pmod 6$.
\end{proposition}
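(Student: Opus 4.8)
The plan is to recognise $C_3 \twist C_\ell$ as a circulant graph and then read off its nullity and its kernel eigenvector directly from the circulant spectrum.

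The structural fact I would establish first is that $C_3 \twist C_\ell \cong \Circ(\mathbb{Z}_{3\ell}, \{1, \ell\})$ for every $\ell$. In $C_3 \cart C_\ell$ the ``vertical'' edges (those inherited from the $C_\ell$ factor) form three disjoint $\ell$-cycles, one on each strand $\{(i,j) : j \in \mathbb{Z}_\ell\}$; the single twist between layers $j=0$ and $j=1$ acts on the rungs by the $3$-cycle $(0\ 1\ 2)$, which merges the three strands into one cycle of length $3\ell$. Tracing that cycle --- start at $(0,0)$, cross the twisted rung to $(1,1)$, run along the strand $i=1$ back to $(1,0)$, cross to $(2,1)$, run along the strand $i=2$ to $(2,0)$, cross to $(0,1)$, run along the strand $i=0$ to $(0,\ell-1)$, and back to $(0,0)$ --- produces a labelling $v_0, v_1, \dots, v_{3\ell-1}$ of the vertices in which the vertical edges are exactly $\{v_m, v_{m+1}\}$, $m \in \mathbb{Z}_{3\ell}$. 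Under this labelling the $\ell$ triangles $\{(0,j),(1,j),(2,j)\}$ become the triples $\{v_j, v_{j+\ell}, v_{j+2\ell}\}$ (indices modulo $3\ell$), whose edges are precisely the chords $\{v_m, v_{m+\ell}\}$; equivalently, the ``$\pm\ell$'' part of $\Circ(\mathbb{Z}_{3\ell},\{1,\ell\})$ splits into $\ell$ disjoint triangles, one per layer. Since both graphs are $4$-regular on $3\ell$ vertices, checking that no further edges occur completes the isomorphism. This bookkeeping is the part most likely to need care.

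Given the circulant description, the eigenvalues are $\lambda_t = 2\cos\!\bigl(\tfrac{2\pi t}{3\ell}\bigr) + 2\cos\!\bigl(\tfrac{2\pi t}{3}\bigr)$ for $t \in \{0,1,\dots,3\ell-1\}$, with the Fourier modes $f_t = \bigl(e^{2\pi i t m/(3\ell)}\bigr)_m$ as eigenvectors; these form an orthogonal basis, so $\eta$ equals the number of $t$ with $\lambda_t = 0$. Now $2\cos(2\pi t/3)$ equals $2$ when $3 \mid t$ and $-1$ otherwise, so $\lambda_t = 0$ forces either ($3 \mid t$ and $\cos(2\pi t/(3\ell)) = -1$) or ($3 \nmid t$ and $\cos(2\pi t/(3\ell)) = \tfrac12$). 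The first alternative has the unique solution $t = 3\ell/2$, which is a multiple of $3$ because $\ell$ is even, so it is admissible; the second alternative would force $t \in \{\ell/2,\, 5\ell/2\}$, but the hypothesis $\ell \equiv 0 \pmod 6$ makes both of these multiples of $3$, contradicting $3 \nmid t$. Hence $t = 3\ell/2$ is the only zero, and $\eta(C_3 \twist C_\ell) = 1$. This is the one step that uses the congruence $6 \mid \ell$.

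Finally, since $e^{2\pi i(3\ell/2)/(3\ell)} = e^{\pi i} = -1$, the kernel eigenvector is $f_{3\ell/2} = \bigl((-1)^m\bigr)_{m\in\mathbb{Z}_{3\ell}}$, all of whose entries are $\pm 1$ and hence nonzero; one also checks directly that $(-1)^{m-1}+(-1)^{m+1}+(-1)^{m-\ell}+(-1)^{m+\ell}=0$ because $\ell$ is even. Thus $C_3\twist C_\ell$ has nullity one together with a full kernel eigenvector, i.e.\ it is a nut graph, as claimed. Essentially all of the real work lies in the identification $C_3 \twist C_\ell \cong \Circ(\mathbb{Z}_{3\ell},\{1,\ell\})$; the spectral part is a short trigonometric computation.
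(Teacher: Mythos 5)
Your proof is correct, and it takes a genuinely different route from the paper. You identify $C_3 \twist C_\ell$ with the circulant $\Circ(\mathbb{Z}_{3\ell}, \{1, \ell\})$ --- the bookkeeping checks out: the trace you describe does linearise the three strands plus the three twisted rungs into a single $3\ell$-cycle, and under that labelling the layer triangles become exactly the $\pm\ell$ chords, so counting $6\ell$ edges on each side closes the isomorphism --- and then you read the nullity off the circulant eigenvalue formula $\lambda_t = 2\cos(2\pi t/(3\ell)) + 2\cos(2\pi t/3)$, where the hypothesis $6 \mid \ell$ enters precisely to kill the candidates $t = \ell/2,\, 5\ell/2$. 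The paper instead invokes the Kwak--Lee theorem on characteristic polynomials of graph bundles to factor $\Phi(C_3 \twist C_\ell;\lambda)$ as $\Phi(M_1;\lambda-2)\cdot\Phi(M_\omega;\lambda+1)^2$ and then shows $M_\omega + I$ has full rank by an explicit sequence of column operations culminating in a monomial matrix with computable determinant. Your argument is considerably more elementary and self-contained: it avoids the voltage-graph machinery and the rank computation entirely, it produces the kernel eigenvector $((-1)^m)_m$ as a byproduct rather than as an afterthought (it agrees with the paper's $\x((i,j)) = (-1)^j$ since $\ell$ is even), and it transparently explains what goes wrong for other residues of $\ell$ (e.g.\ nullity $3$ when $\ell$ is even but not divisible by $3$). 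What the paper's approach buys is generality --- the bundle/voltage technique applies to twisted products $C_k \twist C_\ell$ where the fibre is not a complete graph and no circulant description is available --- but for this specific statement your route is the cleaner one.
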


\begin{proof}
The \emph{twisted product} $C_3 \twist C_\ell$ is an example of a \emph{graph bundle} \cite{Pisanski1982,Pisanski1983}.
Kwak et al.~\cite{Kwak1992,Kwak1993} studied characteristic polynomials of some specific graph bundles \cite{Kwak1992,Kwak1993}.
Here, we apply their Theorem~8 from \cite{Kwak1992}; in the language of  \cite[Theorem~8]{Kwak1992}, our $C_3 \twist C_\ell$ is in fact $C_\ell \times^{\phi} C_n$, where $\phi$ is an $\Aut(C_n)$-voltage assignment and $n = 3$. $\Aut(C_3)$ contains $\mathbb{Z}_3$ as a subgroup. In our case, $\phi$ maps every directed edge of $\vec{C}_\ell$ to $0$ of $\mathbb{Z}_3$, except for the directed edges $(0, 1)$ and $(1, 0)$ which are mapped to $1$ and its inverse $2$, respectively.

Define an $\ell \times \ell$ matrix $M_z$, where $z \in \mathbb{C}$, as follows
$$
(M_z)_{i,j} = \begin{cases}
z, & i = 0 \text{ and } j = 1; \\
\bar{z}, & i = 1 \text{ and } j = 0; \\
1, &(i, j) \notin \{(1, 0), (0,1 )\} \text{ and }  i - j \equiv \pm 1 \pmod \ell \\
0, & \text{otherwise}.
\end{cases}
$$
Note that $M_1$ is the adjacency matrix of $C_\ell$. Let $\omega = (-1 +\sqrt{3}i) / 2$. Theorem 8 from \cite{Kwak1992} gives
$$
\Phi(C_3 \twist C_\ell; \lambda) = \Phi(M_1; \lambda - 2) \cdot \Phi(M_{\omega}; \lambda + 1) \cdot \Phi(M_{\bar{\omega}}; \lambda + 1) = \Phi(M_1; \lambda - 2) \cdot \Phi(M_{\omega}; \lambda + 1)^2.
$$
We show that the nullity of $C_3 \twist C_\ell$ is $1$ for $\ell \equiv 0 \pmod 6$.  First, $\Phi(M_1; \lambda - 2)$ contributes one factor $\lambda$, as
$2$ is an eigenvalue of $C_\ell$ with multiplicity $1$. To show that $\Phi(M_{\omega}; \lambda + 1)$ does not contribute additional factors $\lambda$, we show that $M_{\omega} + I_{\ell \times \ell}$ is of full rank.

Let $B = M_{\omega} + I_{\ell \times \ell}$. Let $B_i$ denote the $i$-th column of $B$. 
From $B$ we can obtain an equivalent matrix $C$ by defining $C_i = B_i - B_{i + 1}$ for $i \geq 1$ (indices modulo $\ell$) and 
$C_0 = B_0 - \omega^2 B_1$. Note that 
$$
C_{ij} = \begin{cases}
\omega, & i = 0 \text{ and } j = 1; \\
1, & (i, j) \neq (0, 1) \text{ and } i + 1 \equiv j \pmod \ell; \\
-\omega^2, & (i, j) \in \{(2, 0), (1, \ell - 1)\} \\
-1, & (i, j) \notin \{(2, 0), (1, \ell - 1)\} \text{ and } i - 2 \equiv j \pmod \ell; \\
0, & \text{otherwise}.
\end{cases}
$$
From matrix $C$ we can obtain an equivalent matrix $D$ by permuting columns, namely
$$D = [ C_1 \ C_4 \ C_7 \ldots C_{\ell - 2} \mid C_2 \ C_5 \ C_8 \ldots C_{\ell - 1} \mid C_0 \ C_3 \ C_6 \ldots C_{\ell - 3} ].$$ 
Note that matrix $D$ is composed of three blocks of size $\ell \times (\ell/3)$. Block $i$, $0 \leq i \leq 2$, contains
non-zero entries only in rows $j$, $0 \leq j < \ell$, such that $j \equiv i \pmod 3$.
Now, we can define a matrix $E$ that is equivalent to matrix $D$ by defining, for $0 \leq i < \ell / 3$,
\begin{align*}
E_i & = \sum_{j=0}^ {\ell / 3 - i - 1} D_j + \sum_{j=\ell / 3 - i}^ {\ell / 3 - 1} \omega D_j; \\
E_{i+\ell / 3} & = \sum_{j=0}^ {i - 1} \omega^2 D_{j+\ell/3} + \sum_{j=i}^ {\ell / 3 - 1} D_{j+\ell/3}; \\
E_{i+2\ell / 3} & = \sum_{j=0}^ {\ell / 3 - i - 1} D_{(j + 1) \bmod (\ell/3) +2\ell/3} + \sum_{j=\ell / 3 - i}^ {\ell / 3 - 1} \omega D_{(j + 1) \bmod (\ell/3) +2\ell/3}. 
\end{align*}
Matrix $E$ has a single non-zero entry in each row and each column; $2\ell/3 - 1$ of these entries are $\omega - 1$ and $\ell/3 + 1$ of these
entries are $\omega + 2$, and the determinant is $(-1)^{\ell/6}(\omega - 1)^{2\ell/3 - 1}(\omega + 2)^{\ell/3 + 1} = 3^{\ell/2} \omega^{\ell/6 + 2}\neq 0$.
But matrix $E$ is equivalent to $B$ which is therefore of full rank. Hence the nullity of $B$ is 1 and therefore $C_3 \twist C_\ell$ is a nut graph.
\end{proof}

Note that the proof does not require explicit construction of the kernel eigenvector. However, it is easily obtained. 
Define $\x\colon V(C_3 \twist C_\ell) \to \mathbb{R}$ by $\x((i, j)) = (-1)^{j}$. Observe that $\x \in \ker \mathbf{A}(C_3 \twist C_\ell)$ and is a full vector.

\begin{figure}[!htb]
\centering
\subcaptionbox{\label{subfig:fam12a}$A_4$}
{ \includegraphics[scale=0.8]{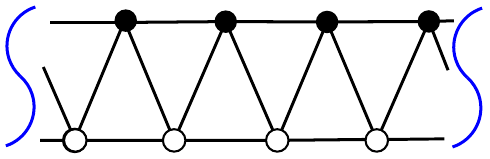} }

\subcaptionbox{\label{subfig:fam12b}$C_3 \cart C_4$}
{ \quad \includegraphics[scale=0.8]{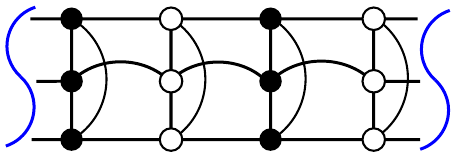} }

\subcaptionbox{\label{subfig:fam12c}$C_3 \twist C_6$}
{ \includegraphics[scale=0.8]{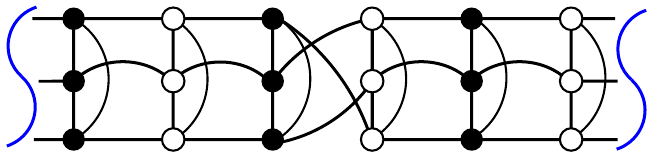} }
\caption{The smallest examples of each of the families described in Propositions~\ref{prop:fam1_2_a} to~\ref{prop:fam1_2_c}. They are shown embedded on a circular strip;
the end blue curves are to be identified. Entries in the kernel eigenvector in each graph are all of equal magnitude and represented by circles colour-coded for sign. }
\label{fig:fam1_2_examples}
\end{figure}

\begin{proof}[Proof of Theorem~\ref{thm:fam1_2}]
By Theorem~\ref{thm:four}, orders and degrees of vertex-transitive nut graphs are even. In fact, our families are all quartic.
The family $A_\ell$, where $\ell \geq 4$ even, described in Proposition~\ref{prop:fam1_2_a}, covers orders $\{ n \geq 8 \mid n \text{ even} \text{ and } n \not\equiv 0 \pmod 6 \}$.
The family $C_3 \cart C_\ell$, where $\ell \geq 4$ even, described in Proposition~\ref{prop:fam1_2_b}, covers orders $\{ n \geq 12 \mid n \equiv 0 \pmod 6 \text{ and } n \not\equiv 0 \pmod{18} \}$.
Finally, the family  $C_3 \twist C_\ell$, where $\ell \geq 6$ even, described in Proposition~\ref{prop:fam1_2_c}, covers orders $\{ n \geq 18 \mid n \equiv 0 \pmod{18} \}$.
\end{proof}

There exist vertex transitive nut graphs that are not Cayley graphs. The three minimal examples of non-Cayley nut graphs have order $16$, with invariants $(d(G), o_e(G), |\mathrm{Aut}(G)|)$ of $(4, 3, 32)$, $(6, 4, 32)$ and $(10, 5, 32)$, respectively. The quartic example is shown in Figure~\ref{fig:nonCayleyExamples4}(a); it is a tetracirculant with vertex set 
$\{ u_i, v_i, w_i, z_i \mid i \in \mathbb{Z}_4 \}$ and edge set $\{ u_i v_i, v_i w_i, u_i z_i, u_i u_{i+1}, v_i v_{i+1}, z_i w_{i+1}, z_i w_{i+2}, z_i w_{i+3}  \mid i \in \mathbb{Z}_4 \}$.
The second smallest quartic example is shown in Figure~\ref{fig:nonCayleyExamples4}(b) 
and is one of $14$ non-Cayley nut graphs of order $30$; it has $2$ edge orbits and its automorphism group is of order $120$. This is a generalisation of Rose Window graphs; its vertex set is $\{u_i, v_i \mid 
i \in \mathbb{Z}_{15}\}$ and its edge set is $\{u_i v_i, u_i v_{i+5}, v_i v_{i+3}, u_i u_{i+6}    \mid i \in \mathbb{Z}_{15} \}$.

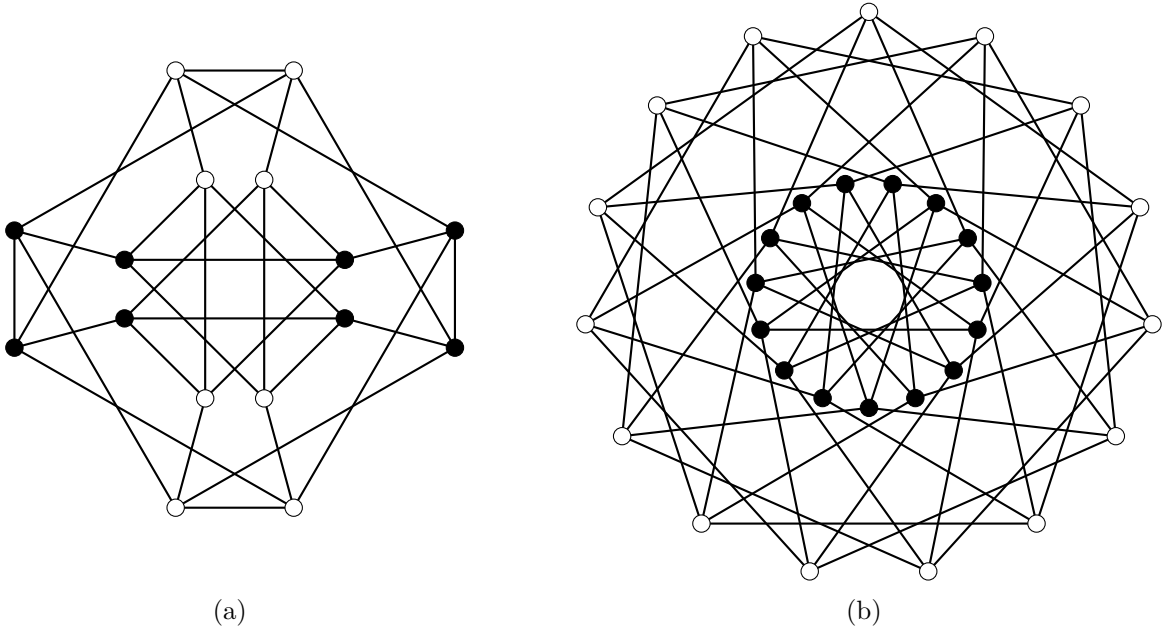
\begin{figure}[!htb]
\centering
\subcaptionbox{\label{subfig:nonCay15}} { 
\begin{tikzpicture}[scale=1.5]
\tikzstyle{edge}=[draw,thick]
\tikzstyle{every node}=[draw, circle, fill=blue!50!white, inner sep=2.3pt]
\foreach \i in {0,1,...,3}{
 \coordinate (a\i) at ({\i * 360 / 4 + 30 - 15}:2.0);
 \coordinate (b\i) at ({\i * 360 / 4 - 15}:2.0);
 \coordinate (c\i) at ({\i * 360 / 4 - 15}:1.0);
 \coordinate (d\i) at ({\i * 360 / 4 + 15}:1.0);
 }
\foreach \i in {0,1,...,3}{
	\pgfmathtruncatemacro{\j}{mod(\i + 1, 4)}
	\pgfmathtruncatemacro{\jj}{mod(\i + 2, 4)}
	\pgfmathtruncatemacro{\jjj}{mod(\i + 3, 4)}
	\draw[edge] (a\i) -- (b\i);
	\draw[edge] (b\i) -- (c\i);
	\draw[edge] (a\i) -- (d\i);
	\draw[edge] (a\i) -- (a\j);
	\draw[edge] (b\i) -- (b\j);
	\draw[edge] (d\i) -- (c\j);
	\draw[edge] (d\i) -- (c\jj);
	\draw[edge] (d\i) -- (c\jjj);
}
\node[fill=none,draw=none] at (0, -2.5) {};
\foreach \i/\c in {0/black,1/white,2/black,3/white} {
  \node[fill=\c] at (b\i) {};
  \node[fill=\c] at (a\i) {};
 \node[fill=\c] at (c\i) {};
  \node[fill=\c] at (d\i) {};
}
\end{tikzpicture}}
\qquad\quad
\subcaptionbox{\label{subfig:nonCay30}} { 
\begin{tikzpicture}[scale=1.5]
\tikzstyle{edge}=[draw,thick]
\tikzstyle{every node}=[draw, circle, fill=blue!50!white, inner sep=2.3pt]
\foreach \i in {0,1,...,14}{
  \coordinate (u\i) at ({\i * 360 / 15 + 60 + 90}:1.0); 
 }
\foreach \i in {0,1,...,14}{
  \coordinate (v\i) at ({\i * 360 / 15 + 90}:2.5);
 }
\foreach \i in {0,1,...,14}{
	\pgfmathtruncatemacro{\j}{mod(\i + 5, 15)}
	\pgfmathtruncatemacro{\k}{mod(\i + 3, 15)}
	\pgfmathtruncatemacro{\kk}{mod(\i + 6, 15)}
	\draw[edge] (u\i) -- (v\i);
	\draw[edge] (u\i) -- (v\j);
	\draw[edge] (v\i) -- (v\k);
	\draw[edge] (u\i) -- (u\kk);
}
\foreach \i in {0,1,...,14}{
  \node[fill=black] at (u\i) {};
  \node[fill=white] at (v\i) {};
 }
\end{tikzpicture}
}
\caption{The two smallest 4-valent non-Cayley vertex transitive nut graphs.
Entries in the kernel eigenvector in each graph are all of equal magnitude and represented by circles colour-coded for sign.}
\label{fig:nonCayleyExamples4}
\end{figure}

\section{Nut graph with two vertex orbits}

Data are available for graphs with two vertex orbits \cite{gordonCensus} and Table~\ref{tbl:2} shows our
analysis for small graphs of this class.
We observe that $(o_v(G), o_e(G)) = (2, 1)$ and $(o_v(G), o_e(G)) = (2, 2)$ do not
occur in the table. This observation is, of course, consistent with Theorem~\ref{thm:norb} from Section~\ref{sec:generalResult}.
We also observe that
the number of edge orbits can be large.
Here, we provide infinite families of nut graphs with two vertex orbits and three edge orbits.

\begin{table}[!ht]
\centering
\begin{tabular}{|c||rrrrrrrrrrrrrr|}
\hline 
\backslashbox{$o_e$}{$n$}
 & 9 & 10 & 12 & 14 & 15 & 16 & 18 & 20 & 21 & 22 & 24 & 25 & 26 & 27 \\
\hline 
\hline 
1 & 0 & 0 & 0 & 0 & 0 & 0 & 0 & 0 & 0 & 0 & 0 & 0 & 0 & 0 \\
2 & 0 & 0 & 0 & 0 & 0 & 0 & 0 & 0 & 0 & 0 & 0 & 0 & 0 & 0 \\
3 & 1 & 1 & 4 & 7 & 6 & 7 & 10 & 20 & 10 & 19 & 33 & 13 & 26 & 19 \\
4 & 0 & 3 & 6 & 2 & 16 & 12 & 16 & 72 & 62 & 6 & 169 & 46 & 19 & 124 \\
5 & 0 & 0 & 12 & 1 & 5 & 24 & 78 & 133 & 40 & 20 & 665 & 66 & 44 & 90 \\
6 & $-$ & 0 & 6 & 3 & 6 & 31 & 99 & 134 & 48 & 122 & 1460 & 160 & 327 & 227 \\
7 & $-$ & $-$ & 5 & 1 & 3 & 31 & 133 & 171 & 77 & 94 & 3418 & 191 & 348 & 445 \\
8 & $-$ & $-$ & 4 & 1 & 0 & 78 & 102 & 310 & 77 & 110 & 7031 & 234 & 552 & 671 \\
9 & $-$ & $-$ & 1 & 0 & 0 & 53 & 136 & 264 & 40 & 184 & 12081 & 429 & 1118 & 777 \\
10 & $-$ & $-$ & 0 & $-$ & 1 & 80 & 71 & 381 & 88 & 45 & 19694 & 599 & 283 & 1984 \\
11 & $-$ & $-$ & $-$ & $-$ & 0 & 73 & 82 & 392 & 193 & 14 & 28013 & 156 & 340 & 5192 \\
12 & $-$ & $-$ & $-$ & $-$ & $-$ & 49 & 18 & 366 & 4 & 154 & 36902 & 574 & 2258 & 797 \\
13 & $-$ & $-$ & $-$ & $-$ & $-$ & 17 & 20 & 165 & 49 & 0 & 41123 & 267 & 77 & 3996 \\
14 & $-$ & $-$ & $-$ & $-$ & $-$ & 13 & 2 & 147 & 0 & 0 & 44395 & 8 & 4 & 292 \\
15 & $-$ & $-$ & $-$ & $-$ & $-$ & 3 & 0 & 238 & 0 & 0 & 39101 & 1 & 15 & 261 \\
16 & $-$ & $-$ & $-$ & $-$ & $-$ & 0 & $-$ & 52 & 0 & 10 & 36325 & 0 & 735 & 420 \\
17 & $-$ & $-$ & $-$ & $-$ & $-$ & $-$ & $-$ & 9 & 0 & 0 & 24477 & 0 & 0 & 1239 \\
18 & $-$ & $-$ & $-$ & $-$ & $-$ & $-$ & $-$ & 18 & $-$ & $-$ & 19068 & 2 & 0 & 136 \\
19 & $-$ & $-$ & $-$ & $-$ & $-$ & $-$ & $-$ & 1 & $-$ & $-$ & 8568 & 2 & 0 & 171 \\
20 & $-$ & $-$ & $-$ & $-$ & $-$ & $-$ & $-$ & 0 & $-$ & $-$ & 5638 & $-$ & 20 & 0  \\
21 & $-$ & $-$ & $-$ & $-$ & $-$ & $-$ & $-$ & $-$ & $-$ & $-$ & 2173 & $-$ & 0 & 0  \\
22 & $-$ & $-$ & $-$ & $-$ & $-$ & $-$ & $-$ & $-$ & $-$ & $-$ & 838 & $-$ & $-$ & 0  \\
23 & $-$ & $-$ & $-$ & $-$ & $-$ & $-$ & $-$ & $-$ & $-$ & $-$ & 140 & $-$ & $-$ & 0  \\
24 & $-$ & $-$ & $-$ & $-$ & $-$ & $-$ & $-$ & $-$ & $-$ & $-$ & 63 & $-$ & $-$ & $-$  \\
25 & $-$ & $-$ & $-$ & $-$ & $-$ & $-$ & $-$ & $-$ & $-$ & $-$ & 7 & $-$ & $-$ & $-$  \\
26 & $-$ & $-$ & $-$ & $-$ & $-$ & $-$ & $-$ & $-$ & $-$ & $-$ & 0 & $-$ & $-$ & $-$  \\
\hline 
\hline 
$\Sigma$ & 1 & 4 & 38 & 15 & 37 &  471 & 767 & 2873 & 688 & 778 & 331382 & 2748 & 6166 & 16841  \\
\hline 
\end{tabular}
\caption{The number of nut graphs with precisely two vertex orbits of the given order $n$ and number of edge orbits $o_e$.}
\label{tbl:2}
\end{table}

\begin{sidewaystable}[!htbp]
\centering
\begin{tabular}{|c||rrrrrrrrrrrrrr|}
\hline 
\backslashbox{$o_e$}{$n$}
 & 9 & 10 & 12 & 14 & 15 & 16 & 18 & 20 & 21 & 22 & 24 & 25 & 26 & 27 \\
\hline 
\hline 
1 & 5 & 5 & 8 & 8 & 15 & 11 & 14 & 21 & 24 & 16 & 31 & 23 & 18 & 30 \\
2 & 29 & 43 & 98 & 103 & 151 & 190 & 285 & 420 & 341 & 315 & 869 & 433 & 449 & 628 \\
3 & 34 & 74 & 270 & 305 & 402 & 718 & 1341 & 2117 & 1332 & 1624 & 6279 & 1961 & 2968 & 3743 \\
4 & 12 & 52 & 331 & 363 & 514 & 1352 & 2903 & 5318 & 2573 & 3621 & 22524 & 4379 & 8593 & 10968 \\
5 & 4 & 17 & 284 & 258 & 469 & 1738 & 4359 & 9211 & 3725 & 5488 & 56544 & 7739 & 16838 & 23380 \\
6 & $-$ & 1 & 183 & 129 & 345 & 1879 & 5130 & 12453 & 4459 & 6462 & 112054 & 11823 & 26114 & 41411 \\
7 & $-$ & $-$ & 110 & 50 & 251 & 1831 & 5496 & 14313 & 4999 & 6614 & 190905 & 16078 & 35084 & 66769 \\
8 & $-$ & $-$ & 53 & 13 & 152 & 1787 & 5305 & 14885 & 5255 & 6056 & 292831 & 19694 & 42043 & 99346 \\
9 & $-$ & $-$ & 22 & 2 & 87 & 1627 & 4714 & 14377 & 5316 & 4993 & 416618 & 22044 & 45677 & 139401 \\
10 & $-$ & $-$ & 3 & $-$ & 30 & 1427 & 3597 & 13039 & 4845 & 3683 & 555666 & 22452 & 45324 & 182434 \\
11 & $-$ & $-$ & $-$ & $-$ & 8 & 1086 & 2365 & 11191 & 3992 & 2411 & 694869 & 20502 & 40992 & 221082 \\
12 & $-$ & $-$ & $-$ & $-$ & $-$ & 734 & 1213 & 9054 & 2763 & 1406 & 809588 & 16446 & 33697 & 242741 \\
13 & $-$ & $-$ & $-$ & $-$ & $-$ & 392 & 499 & 6826 & 1626 & 721 & 872753 & 11346 & 25145 & 239567 \\
14 & $-$ & $-$ & $-$ & $-$ & $-$ & 169 & 128 & 4666 & 747 & 318 & 863949 & 6566 & 16956 & 208850 \\
15 & $-$ & $-$ & $-$ & $-$ & $-$ & 49 & 21 & 2832 & 277 & 117 & 780210 & 3118 & 10260 & 160119 \\
16 & $-$ & $-$ & $-$ & $-$ & $-$ & 9 & $-$ & 1457 & 66 & 30 & 639027 & 1179 & 5482 & 106253 \\
17 & $-$ & $-$ & $-$ & $-$ & $-$ & $-$ & $-$ & 624 & 12 & 5 & 471486 & 340 & 2524 & 60839 \\
18 & $-$ & $-$ & $-$ & $-$ & $-$ & $-$ & $-$ & 204 & $-$ & $-$ & 311318 & 69 & 959 & 29385 \\
19 & $-$ & $-$ & $-$ & $-$ & $-$ & $-$ & $-$ & 48 & $-$ & $-$ & 182116 & 8 & 288 & 11915 \\
20 & $-$ & $-$ & $-$ & $-$ & $-$ & $-$ & $-$ & 6 & $-$ & $-$ & 93435 & $-$ & 60 & 3857 \\
21 & $-$ & $-$ & $-$ & $-$ & $-$ & $-$ & $-$ & $-$ & $-$ & $-$ & 41330 & $-$ & 8 & 993 \\
22 & $-$ & $-$ & $-$ & $-$ & $-$ & $-$ & $-$ & $-$ & $-$ & $-$ & 15463 & $-$ & $-$ & 173 \\
23 & $-$ & $-$ & $-$ & $-$ & $-$ & $-$ & $-$ & $-$ & $-$ & $-$ & 4716 & $-$ & $-$ & 21 \\
24 & $-$ & $-$ & $-$ & $-$ & $-$ & $-$ & $-$ & $-$ & $-$ & $-$ & 1120 & $-$ & $-$ & $-$ \\
25 & $-$ & $-$ & $-$ & $-$ & $-$ & $-$ & $-$ & $-$ & $-$ & $-$ & 180 & $-$ & $-$ & $-$ \\
26 & $-$ & $-$ & $-$ & $-$ & $-$ & $-$ & $-$ & $-$ & $-$ & $-$ & 14 & $-$ & $-$ & $-$ \\
\hline 
\hline 
$\Sigma$ & 84 & 192 & 1362 & 1231 & 2424 & 14999 & 37370 & 123062 & 42352 & 43880 & 7435895 & 166200 & 359479 & 1853905 \\
\hline 
\end{tabular}
\caption{The number of connected graphs with precisely two vertex orbits of the given order $n$ and number of edge orbits $o_e$.
Only orders where nut graphs with two vertex orbits exist are included.
}
\label{tbl:2full}
\end{sidewaystable}

\subsection{Families with \texorpdfstring{$\boldsymbol{(o_v, o_e) = (2, 3)}$}{(o\_v, o\_e) = (2, 3)}}
\label{subsec:fam23}

From the line for $o_e = 3$ in Table~\ref{tbl:2} it appears that nut graphs with two vertex orbits and three edge orbits exist for all orders $n 
\geq 9$ such that $n$ is not a prime; see Conjecture~\ref{conj:conjecturev2e3}.
Here, we provide two families of such nut graphs; one that covers orders that are multiples of three, and one that covers
orders that are multiples of two but not multiples of three.

\begin{proposition} 
\label{prop:triangCyc}
Let $\mathcal{T}_n$ be a $n$-cycle with a triangle fused to every vertex (see Figure~\ref{fig:families23examples}(a) for an example).
The graph $\mathcal{T}_n$ is a nut graph for every $n \geq 3$.
\end{proposition}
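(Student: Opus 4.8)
The plan is to pin down $\ker \mathbf{A}(\mathcal{T}_n)$ by hand, using only the local condition~\eqref{eq:localCondition}, and to read off both $\eta(\mathcal{T}_n)=1$ and fullness of the kernel eigenvector from the explicit answer. First I would fix notation: write $V(\mathcal{T}_n)=\{c_i,a_i,b_i \mid i\in\mathbb{Z}_n\}$, where $c_0c_1\cdots c_{n-1}$ is the central $n$-cycle and, for each $i$, the triangle fused at $c_i$ is the triangle on $c_i,a_i,b_i$; so the edge set is $\{c_ic_{i+1},\,c_ia_i,\,c_ib_i,\,a_ib_i \mid i\in\mathbb{Z}_n\}$ with indices modulo $n$, the order is $3n$, each $c_i$ has degree $4$, and each $a_i,b_i$ has degree $2$.

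Next I would exploit the degree-two vertices. Let $\x\in\ker\mathbf{A}(\mathcal{T}_n)$ and set $y_i:=x_{c_i}$. The local condition at $a_i$ reads $x_{c_i}+x_{b_i}=0$ and at $b_i$ reads $x_{c_i}+x_{a_i}=0$, so $x_{a_i}=x_{b_i}=-y_i$ for every $i$. Substituting this into the local condition at $c_i$, i.e.\ $x_{c_{i-1}}+x_{c_{i+1}}+x_{a_i}+x_{b_i}=0$, gives the second-order cyclic recurrence $y_{i-1}-2y_i+y_{i+1}=0$ for all $i\in\mathbb{Z}_n$.

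Then I would solve the recurrence: the forward differences $\delta_i:=y_{i+1}-y_i$ satisfy $\delta_i=\delta_{i-1}$, so $\delta_i\equiv\delta$ is constant and $y_i=y_0+i\delta$; the cyclic identity $y_0=y_n=y_0+n\delta$ forces $\delta=0$, hence $y_i$ is constant, say $y_i=t$, and every kernel vector is given by $x_{c_i}=t$, $x_{a_i}=x_{b_i}=-t$. This shows that $\ker\mathbf{A}(\mathcal{T}_n)$ is one-dimensional, i.e.\ $\eta(\mathcal{T}_n)=1$, and that for $t\neq 0$ the vector has no zero coordinate; since $\mathcal{T}_n$ is connected (and non-bipartite, as it contains triangles), it is therefore a nut graph.

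I do not expect a genuine obstacle here: the argument is a short linear-algebra computation, and a characteristic-polynomial/transfer-matrix approach would work too but is heavier. The only points needing care are stating precisely what ``a triangle fused to every vertex'' means (so that the degree-two vertices force $x_{a_i}=x_{b_i}=-x_{c_i}$) and noting that it is exactly the cyclic boundary condition $n\delta=0$ that kills the linear term — the analogous path-graph decoration would have a larger kernel. As a bonus, the computation produces the kernel eigenvector $x_{c_i}=1$, $x_{a_i}=x_{b_i}=-1$ explicitly, which will be convenient for the later analysis of the vertex and edge orbits of $\mathcal{T}_n$.
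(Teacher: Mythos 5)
Your proposal is correct and follows essentially the same route as the paper: eliminate the degree-two triangle vertices to get $x_{a_i}=x_{b_i}=-x_{c_i}$, reduce to the cyclic recurrence $y_{i-1}-2y_i+y_{i+1}=0$, and conclude that the kernel is spanned by the full vector with $+1$ on the cycle and $-1$ on the triangles. The only (cosmetic) difference is in the last step: the paper recognises the recurrence as $A(C_n)\y=2\y$ and invokes simplicity of the eigenvalue $2$ for a connected $2$-regular graph, whereas you solve it directly by telescoping the differences and using the cyclic boundary condition — both are fine.
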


\begin{proof}
Let the vertices of the $n$-cycle be labeled $0, 1, \ldots, n-1$.
Let $a_0, a_1, \ldots, a_{n - 1}$ denote the entries on the vertices of the $n$-cycle in a kernel eigenvector 
of $\mathcal{T}_n$.
It is easy to see that both vertices of the triangle fused to vertex $i$ of the cycle must then carry entry $-a_i$.
The local condition at vertices of the cycle is
\begin{equation}
a_{i - 1} -2 a_{i}  + a_{i + 1} = 0 \qquad \text{for } i = 0, \ldots, n - 1,
\label{eq:localTriangCyc}
\end{equation}
where indices are modulo $n$. Equation \eqref{eq:localTriangCyc} in matrix form is
\begin{equation}
A(C_n) \x = 2 \x,
\label{eq:localTriangCycMat}
\end{equation}
where $A(C_n)$
is the adjacency matrix of the $n$-cycle and $\x =  \begin{bmatrix} a_0 & a_1 & a_2 & \ldots & a_{n - 1}\end{bmatrix}$. 
The cycle $C_n$ is a $2$-regular connected graph, and thus has a unique eigenvalue $2$
in its spectrum, with $\x = \begin{bmatrix} 1 & 1 & 1 & \ldots & 1\end{bmatrix}$ and the solution to 
Equation \eqref{eq:localTriangCyc} is $a_0 = a_1 = \cdots = a_{n - 1} = 1$.
\end{proof}

\begin{figure}[!htb]
\centering
\begin{subfigure}{0.3\textwidth}
\centering
    \includegraphics{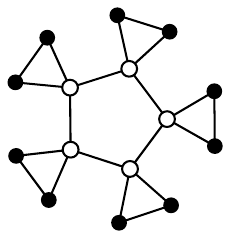}
    \caption{$\mathcal{T}_5$}
    \label{fig:fam23exa}
\end{subfigure}%
\begin{subfigure}{0.3\textwidth}
\centering
   \includegraphics{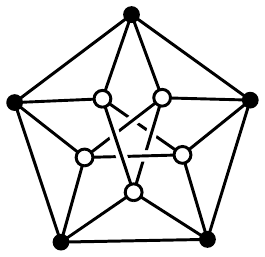}
    \caption{$R_5(1, 2)$}
    \label{fig:fam23exb1}
\end{subfigure}
\begin{subfigure}{0.3\textwidth}
\centering
   \includegraphics{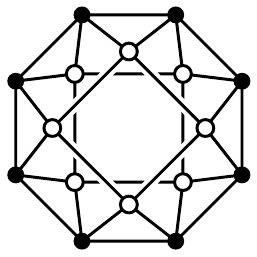}
    \caption{$R_8(1, 2)$}
    \label{fig:fam23exb2}
\end{subfigure}
\caption{Small examples of each of the families described in Propositions~\ref{prop:triangCyc} and~\ref{prop:roseWindow}.
Entries in the kernel eigenvector in each graph are all of equal magnitude and represented by circles colour-coded for sign.}
\label{fig:families23examples}
\end{figure}

In 2008, the family of \emph{Rose Window} graphs was introduced \cite{Stitch2008}. A Rose Window graph, denoted $R_n(a, r)$,
is defined by 
\begin{align*}
V(R_n(a, r)) & = \{ v_0, v_1, \ldots ,v_{n - 1} \} \cup \{ u_0, u_1, \ldots , u_{n - 1} \} \text{ and} \\
E(R_n(a, r)) & = \{ v_i v_{i+1}, u_i u_{i+r} \mid i = 0, \ldots, n - 1\} \cup \{ u_i v_i, u_i v_{i+a} \mid i = 0, \ldots, n - 1\},
\end{align*}
where all indices are modulo $n$. We will consider the subset with $a = 1$ and $r = 2$ (see Figures~\ref{fig:families23examples}(b) and~\ref{fig:families23examples}(c) for examples).

\begin{proposition}
\label{prop:roseWindow}
Let $n \geq 5$. The graph $R_n(1, 2)$ is a core graph for all $n \geq 5$.
The graph $R_n(1, 2)$ is a nut graph if and only if $n \not\equiv 0 \pmod{3}$.
\end{proposition}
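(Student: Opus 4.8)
The plan is to exploit the cyclic symmetry of $R_n(1,2)$ and block-diagonalise its adjacency matrix by the discrete Fourier transform on $\mathbb{Z}_n$, organised around roots of unity in the spirit of the proof of Proposition~\ref{prop:fam1_2_c}. Ordering the vertices as $v_0,\dots,v_{n-1},u_0,\dots,u_{n-1}$, each of the four blocks of $\mathbf{A}(R_n(1,2))$ is a circulant over $\mathbb{Z}_n$: the $v$-$v$ block encodes $v_i\sim v_{i\pm1}$, the $u$-$u$ block encodes $u_i\sim u_{i\pm2}$, and the $u$-$v$ block encodes $u_i\sim v_i,v_{i+1}$. Hence the whole matrix is simultaneously block-diagonalised by the Fourier basis, and for an $n$-th root of unity $\omega$ one looks for a kernel eigenvector of the shape $v_i\mapsto\alpha\,\omega^i$, $u_i\mapsto\beta\,\omega^i$. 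Writing out the local condition at $v_i$ and at $u_i$ collapses everything to the $2\times2$ homogeneous system
\begin{equation*}
(\omega+\omega^{-1})\,\alpha+(1+\omega^{-1})\,\beta=0,\qquad (1+\omega)\,\alpha+(\omega^2+\omega^{-2})\,\beta=0 .
\end{equation*}
A direct expansion gives the determinant of this system as $\omega^{-3}(\omega^3-1)^2$, which vanishes exactly when $\omega^3=1$; among the $n$-th roots of unity this occurs for $\omega=1$ only if $3\nmid n$, and for the three cube roots of unity if $3\mid n$.

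The core-graph claim then follows at once, without even using the nullity count: for $\omega=1$ the system becomes $\alpha+\beta=0$, and lifting $(\alpha,\beta)=(1,-1)$ gives the vector $\x$ with $\x(v_i)=1$ and $\x(u_i)=-1$ for all $i$. One checks directly from the local condition (neighbours of $v_i$ are $v_{i\pm1},u_i,u_{i-1}$; neighbours of $u_i$ are $u_{i\pm2},v_i,v_{i+1}$) that $\x\in\ker\mathbf{A}(R_n(1,2))$, and $\x$ has no zero entry, so $R_n(1,2)$ is a core graph for every $n\ge5$.

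For the nut-graph equivalence I would argue the two directions separately. If $3\nmid n$, the only character with singular $2\times2$ block is $\omega=1$, and there the block is the nonzero rank-one matrix all of whose entries equal $2$; hence $\eta(R_n(1,2))=1$, and by the previous paragraph its kernel is spanned by the full vector $\x$, so $R_n(1,2)$ is a nut graph. If $3\mid n$, then besides $\omega=1$ the two primitive cube roots of unity also make the block singular, and there the relation $1+\omega+\omega^2=0$ collapses the block to rank one again; solving it yields a complex kernel eigenvector whose real and imaginary parts are two further linearly independent kernel vectors. Thus $\eta(R_n(1,2))\ge2$ (in fact exactly $3$), and $R_n(1,2)$ is not a nut graph.

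The step that needs the most care is the block-circulant bookkeeping — identifying the four circulant blocks correctly, in particular the shift hidden in the adjacency $u_i\sim v_i,v_{i+1}$ — after which the determinant evaluation and the rank counts are routine. It is also worth stating explicitly that at each singular character the reduced $2\times2$ matrix is visibly nonzero, hence of rank exactly one, so that the nullity is pinned down exactly and not merely bounded below.
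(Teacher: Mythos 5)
Your proof is correct, and it takes a genuinely different route from the paper. You exploit the full $\mathbb{Z}_n$-symmetry of $R_n(1,2)$: the adjacency matrix is a $2\times 2$ block matrix of circulants, so the Fourier basis splits it into $n$ Hermitian $2\times 2$ blocks, and the determinant $(\omega+\omega^{-1})(\omega^2+\omega^{-2})-(1+\omega^{-1})(1+\omega)=\omega^{-3}(\omega^3-1)^2$ is exactly right (I verified the bookkeeping for the shift in $u_i\sim v_i,v_{i+1}$, and the singular blocks are visibly nonzero, hence of rank one, so the nullity is pinned down as $1$ or $3$ according to whether $3\nmid n$ or $3\mid n$). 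The paper instead proceeds by brute force: it uses the local condition as a linear recurrence to express every entry of a candidate kernel vector in terms of six seed values $a_0,a_1,b_{-2},b_{-1},b_0,b_1$, solves the recurrence explicitly by residue of the index mod $3$, and then reduces a $6\times 6$ boundary-condition matrix to echelon form in three separate cases to read off the rank. Your approach is shorter and more conceptual, makes the role of the cube roots of unity transparent, and yields the explicit eigenvectors for the nullity-$3$ case (cf.\ Figure~\ref{fig:ortoBasisRose}) for free; the paper's recurrence method has the advantage of not presupposing the block-circulant structure and so generalises to Rose Window graphs $R_n(a,r)$ with parameters that break the clean two-blocks-of-circulants form, but for $R_n(1,2)$ itself your argument is the cleaner one.
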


\begin{proof}
Let $\x \in \ker \mathbf{A}(R_n(1, 2))$ and let $a_0 = \x(v_0), a_1 = \x(v_1), b_{-2} = \x(u_{n - 2}), b_{-1} = \x(u_{n - 1}), b_0 = \x(u_0)$ and $b_1 = \x(u_1)$.
See Figure~\ref{fig:RoseBud} for an illustration.
\begin{figure}[!htb]
\centering
\includegraphics[scale=0.5]{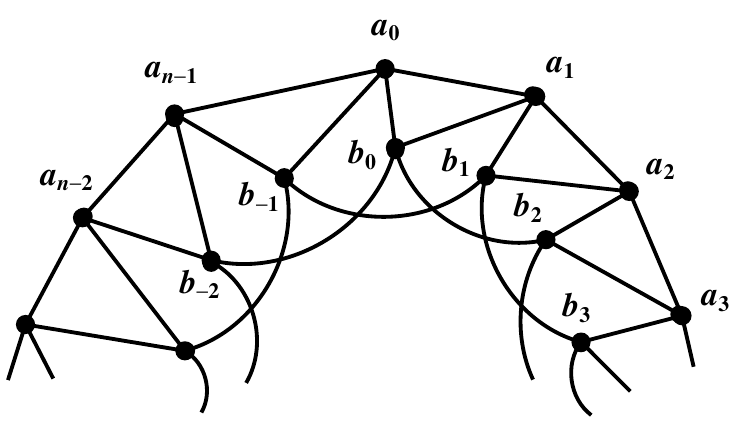}
\caption{Labelling scheme for the Rose Window graph $R_n(1,2)$. Entries in the candidate kernel 
eigenvector are $a_i$ on vertices $v_i$ and $b_i$ on vertices $u_i$, all indices taken modulo $n$.}
\label{fig:RoseBud}
\end{figure}
Using the local condition \eqref{eq:localCondition} at vertices $v_1, \ldots, v_{n - 2}$ and $u_0, \ldots, u_{n - 3}$ the entries in $\x$ of vertices $v_2, \ldots, v_{n-1}$
and $u_2, \ldots, u_{n - 1}$ can be expressed as linear combinations of $a_0, a_1, b_{-2}, b_{-1}, b_0$ and $b_1$. Namely,
\begin{equation}
\label{eq:recurrence}
\begin{aligned}
a_i & = -a_{i-2} - b_{i-2} - b_{i - 1}, \qquad (2 \leq i < n) \\ 
b_i & = -a_{i-2} - a_{i-1} - b_{i - 4}, \qquad (2 \leq i < n) 
\end{aligned}
\end{equation}
where $a_i = \x(v_i)$ and $b_i = \x(u_i)$.
Every entry $\x(v)$, $v \in V(R_n(1, 2)))$, can be assigned a row vector $\boldsymbol{\xi}(v) \in \mathbb{R}^6$, acting as proxy for 
$\x(v) = \boldsymbol{\xi}(v) \cdot [a_0\ a_1\ b_{-2}\ b_{-1}\ b_0\ b_1]$. Solving the linear recurrence relations \eqref{eq:recurrence} 
we obtain
\begin{equation}
a_k = \begin{cases}
\begin{bmatrix} 1 & 0 & \frac{k}{3} & 0 & 0 & -\frac{k}{3} \end{bmatrix}, & k \equiv 0 \pmod{3}; \\[0.5em]
\begin{bmatrix} \frac{k - 1}{3} & \frac{k + 2}{3} & \frac{k - 1}{3} & \frac{k - 1}{3} & 0 & 0 \end{bmatrix}, & k \equiv 1 \pmod{3}; \\[0.5em]
\begin{bmatrix} -\frac{k + 1}{3} & \frac{k - 2}{3} & 0 & \frac{k - 2}{3} & -1 & -\frac{k+1}{3} \end{bmatrix}, & k \equiv 2 \pmod{3}; 
\end{cases}
\end{equation}
and
\begin{equation}
b_k = \begin{cases}
\begin{bmatrix} \frac{k}{3} & -\frac{k}{3} & 0 & -\frac{k}{3} & 1 & \frac{k}{3} \end{bmatrix}, & k \equiv 0 \pmod{3}; \\[0.5em]
\begin{bmatrix} 0 & 0 & -\frac{k - 1}{3} & 0 & 0 & \frac{k + 2}{3} \end{bmatrix}, & k \equiv 1 \pmod{3}; \\[0.5em]
\begin{bmatrix} -\frac{k + 1}{3} & -\frac{k + 1}{3} &  -\frac{k + 1}{3}  & -\frac{k - 2}{3} & 0 & 0 \end{bmatrix}, & k \equiv 2 \pmod{3}. 
\end{cases}
\end{equation}
By using the local condition~\eqref{eq:localCondition} at vertices $v_0, v_{n - 1}, u_{n - 2}, u_{n - 1}$ we obtain the four linear equations
\begin{equation}
\label{eq:roseEq1}
\begin{aligned}
a_{n - 1} + b_{-1} + b_0 + a_1 & = 0, \\
a_{n - 2} + b_{-2} + b_{-1} + a_0 & = 0, \\
a_{n -1} + a_{0} + b_{n-3} + b_1 & = 0, \\
a_{n -2} + a_{n - 1} + b_{n-4} + b_0 & = 0,
\end{aligned}
\end{equation}
relating $a_0, a_1, b_{-2}, b_{-1}, b_0$ and $b_1$ to each other. Two more equations can be obtained from the fact that $b_{n-2} = \boldsymbol{\xi}(u_{n-2}) = b_{-2}$ and
$b_{n-1} = \boldsymbol{\xi}(u_{n-1}) = b_{-1}$:
\begin{equation}
\label{eq:roseEq2}
\begin{aligned}
b_{n-2} - b_{-2} = 0, \\
b_{n-1} - b_{-1} = 0, 
\end{aligned}
\end{equation}
There are three cases to consider. 

Case $n \equiv 0 \pmod{3}$: The equations~\eqref{eq:roseEq1} and~\eqref{eq:roseEq2} can be written in matrix form
\begin{equation}
\label{eq:RoseCase1}
\begin{bmatrix*}[r]
-\mu & \mu & 0 & \mu & 0 & -\mu \\
\mu & \mu & \mu & \mu & 0 & 0 \\
0 & 0 & 0 & 0 & 0 & 0 \\
-\mu & \mu & 0 & \mu & 0 & -\mu \\
0 & 0 & -\mu & 0 & 0 & \mu \\
-\mu & -\mu & -\mu & -\mu & 0 & 0 
\end{bmatrix*}
\begin{bmatrix*}[l]
a_0 \\
a_1 \\
b_{-2} \\
b_{-1} \\
b_0 \\
b_1 
\end{bmatrix*} = \mathbf{0}_{6 \times 1},
\end{equation}
where $\mu = \tfrac{n}{3}$.
It is easy to see that the matrix in \eqref{eq:RoseCase1} is of rank $3$. This implies that $R_n(1, 2)$ has nullity $3$.

Case $n \equiv 1 \pmod{3}$: The equations~\eqref{eq:roseEq1} and~\eqref{eq:roseEq2} can be written in matrix form
\begin{equation}
\label{eq:RoseCase2}
\begin{bmatrix*}[c]
1 & 1 & \mu & 1 & 1 & -\mu \\
1-\mu  & \mu - 1 & 1 & \mu & -1 & -\mu \\
2 & 0 & 1 & 0 & 0 & 1 \\
0 & 0 & \mu & 0 & 1 & -\mu - 1 \\
-\mu & -\mu & -\mu - 1 & 1-\mu & 0 & 0 \\
\mu & -\mu & 0 & -\mu - 1 & 1 & \mu  
\end{bmatrix*}
\begin{bmatrix*}[l]
a_0 \\
a_1 \\
b_{-2} \\
b_{-1} \\
b_0 \\
b_1 
\end{bmatrix*} = \mathbf{0}_{6 \times 1},
\end{equation}
where $\mu = \frac{n-1}{3}$. Note that $\mu \geq 2$ as $n \geq 5$.
Using tedious but elementary linear algebra, the matrix in \eqref{eq:RoseCase2} can be reduced to its echelon form, from which it can be seen that it is of rank $5$. 
This implies that $R_n(1, 2)$ has nullity $1$.

Case $n \equiv 2 \pmod{3}$: The equations~\eqref{eq:roseEq1} and~\eqref{eq:roseEq2} can be written in matrix form
\begin{equation}
\label{eq:RoseCase3}
\begin{bmatrix*}[c]
\mu & \mu + 2 & \mu & \mu + 1 & 1 & 0 \\
2 & 0 & \mu + 1 & 1 & 0 & -\mu  \\
1 & 1 & 0 & 1 & 0 & 1 \\
\mu + 1 & \mu + 1 & \mu + 1 & \mu & 1 & 0 \\
\mu & -\mu & -1 & -\mu & 1 & \mu \\
0 & 0 & -\mu & -1 & 0 & \mu + 1 
\end{bmatrix*}
\begin{bmatrix*}[l]
a_0 \\
a_1 \\
b_{-2} \\
b_{-1} \\
b_0 \\
b_1 
\end{bmatrix*} = \mathbf{0}_{6 \times 1},
\end{equation}
where $\mu = \frac{n-2}{3}$. Note that $\mu \geq 1$ as $n \geq 5$. As before, the matrix in \eqref{eq:RoseCase3} can be reduced to its 
echelon form, from which it can be seen that it is of rank $5$. This implies that $R_n(1, 2)$ has nullity $1$ also in the present case.

It is easily seen that there exists a full vector in $\ker \mathbf{A}(R_n(1, 2))$ in all three cases. Simply take $a_i = 1$ and $b_i = -1$ for all $i$,
hence $R_n(1, 2)$ is a nut graph if $n \not\equiv 0 \pmod{3}$ and 
merely a core graph if $n \equiv 0 \pmod{3}$.
\end{proof}

The graph $R_n(1, 2))$, for $n \equiv 0 \pmod{3}$, has nullity $3$. Possible choices of basis for the nullspace are depicted in Figure~\ref{fig:ortoBasisRose}.
\begin{figure}[!htb]
\centering
\begin{subfigure}{0.25\textwidth}
\centering
    \includegraphics[scale=0.8]{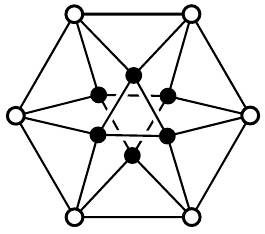}
    \caption{}
    \label{fig:rose6eigv1}
\end{subfigure}%
\begin{subfigure}{0.25\textwidth}
\centering
    \includegraphics[scale=0.8]{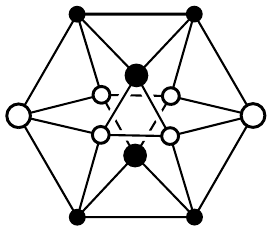}
    \caption{}
    \label{fig:rose6eigv2}
\end{subfigure}%
\begin{subfigure}{0.25\textwidth}
\centering
    \includegraphics[scale=0.8]{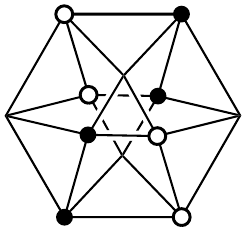}
    \caption{}
    \label{fig:rose6eigv3}
\end{subfigure}%
\begin{subfigure}{0.25\textwidth}
\centering
    \includegraphics[scale=0.8]{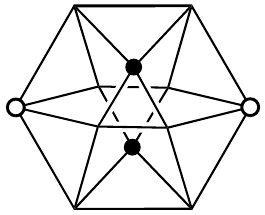}
    \caption{}
    \label{fig:rose6eigv4}
\end{subfigure}%
\caption{Kernel eigenvectors for the graph $R_6(1, 2)$. Vectors (a) to (c) form an orthogonal basis that includes the rotationally symmetric vector that is present in the nullspace 
of $R_n(1, 2)$ for every $n \geq 5$.
An alternative basis consists of vector (d) and its rotations by $\pm 60\degree$. Signs of eigenvector entries are indicated by colour and relative magnitudes by area of the circles, 
where the possible magnitudes are $0$, $1$ and~$2$.}
\label{fig:ortoBasisRose}
\end{figure}

\subsection{Multiplier constructions}
\label{sec:multiConstructions}

The family $\mathcal{T}_n$ can in fact be substantially generalised by defining the \emph{triangle-multiplier construction}. Unlike
some constructions in the literature~\cite{GPS}, the triangle-multiplier applies to parent graphs that are not necessarily nut graphs.

\begin{proposition}
\label{prop:multiplier3}
Let $G$ be a connected $(2t)$-regular graph, where $t \geq 1$. Let $\mathcal{M}_3(G)$ be the graph obtained from $G$ by
fusing a bouquet of $t$ triangles to every vertex of $G$. Then $\mathcal{M}_3(G)$ is a nut graph.
\end{proposition}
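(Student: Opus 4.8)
The plan is to follow the template of Proposition~\ref{prop:triangCyc}, reducing the kernel condition on $\mathcal{M}_3(G)$ to an eigenvalue equation for the parent graph $G$. Fix notation: for each $v\in V(G)$ let $p_{v,1},q_{v,1},\dots,p_{v,t},q_{v,t}$ be the new vertices, where $\{v,p_{v,i},q_{v,i}\}$ is the $i$-th triangle of the bouquet fused at $v$. Thus each $p_{v,i}$ and $q_{v,i}$ has degree $2$, while $v$ has degree $2t+2t=4t$ in $\mathcal{M}_3(G)$; note also that $\mathcal{M}_3(G)$ is connected because $G$ is, and its smallest instance ($t=1$, $G=C_3$) already has $9\geq 7$ vertices, so every $\mathcal{M}_3(G)$ is a non-trivial graph.

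Let $\mathbf{x}\in\ker\mathbf{A}(\mathcal{M}_3(G))$ and put $a_v=\mathbf{x}(v)$ for $v\in V(G)$. The local condition~\eqref{eq:localCondition} at $p_{v,i}$ gives $a_v+\mathbf{x}(q_{v,i})=0$, and at $q_{v,i}$ it gives $a_v+\mathbf{x}(p_{v,i})=0$; hence every outer vertex of the bouquet at $v$ carries the value $-a_v$. Feeding this into the local condition at $v$ yields
\[
\sum_{u\in N_G(v)} a_u \;+\; \sum_{i=1}^{t}\bigl(-a_v - a_v\bigr) \;=\; 0,
\]
that is, $\sum_{u\in N_G(v)} a_u = 2t\,a_v$ for all $v$, which is exactly the matrix equation $\mathbf{A}(G)\,\mathbf{a} = 2t\,\mathbf{a}$ for $\mathbf{a}=(a_v)_{v\in V(G)}$.

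Next I would invoke Perron--Frobenius: since $G$ is connected and $(2t)$-regular, $2t$ is its largest eigenvalue, it is simple, and its eigenspace is spanned by the all-ones vector. Therefore $\mathbf{a}=\lambda\mathbf{1}$ for some scalar $\lambda$, and then $\mathbf{x}$ is completely determined --- value $\lambda$ on $V(G)$ and value $-\lambda$ on every outer vertex. Conversely, any $\lambda\neq 0$ gives a genuine kernel eigenvector (because $2t\in\sigma(G)$), and it has no zero entry. Hence $\eta(\mathcal{M}_3(G))=1$ with a full kernel eigenvector, so $\mathcal{M}_3(G)$ is a nut graph.

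I do not anticipate a real obstacle: the only step needing care is the bookkeeping that collapses the $2t$ outer vertices of the bouquet at $v$ to the single scalar $-a_v$ and contributes the term $2t\,a_v$ to the local sum at $v$. Once that is set up, the conclusion is immediate from the spectral theory of connected regular graphs, exactly as in the cycle case treated in Proposition~\ref{prop:triangCyc}.
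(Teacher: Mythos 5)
Your proof is correct and follows essentially the same route as the paper's: reduce the kernel condition on $\mathcal{M}_3(G)$ to the eigenvalue equation $\mathbf{A}(G)\mathbf{a}=2t\,\mathbf{a}$ and invoke the simplicity and fullness of the Perron eigenvector of a connected $(2t)$-regular graph. Your explicit check of the converse direction (that the resulting vector really lies in the kernel and is full) is a welcome touch of rigour the paper leaves implicit, but the argument is the same.
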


\begin{proof}
We follow the pattern of the proof of Proposition~\ref{prop:triangCyc}, where Equation~\eqref{eq:localTriangCyc}
is replaced by
\begin{equation}
\sum_{u \in N(v)} \x(u)   -2t \, \x(v)  = 0 \qquad \text{for } v \in V(G),
\label{eq:localTriang}
\end{equation}
which in matrix form is $A(G) \x = 2t\,\x$. So, kernel eigenvectors of $\mathcal{M}_3(G)$ are precisely eigenvectors of $G$ for the eigenvalue $2t$.
But the graph $G$ is $2t$-regular, so the solution of Equation~\ref{eq:localTriang}  is unique and $\x$, i.e.\ the Perron eigenvector, is full. 
\end{proof}

The choice of name for the construction is justified by the fact that $|V(\mathcal{M}_3(G))| = (2t+1)|V(G)|$. 
As Proposition~\ref{prop:multiConstrSym} in Section~\ref{sec:symmetrySec} will show, the triangle-multiplier construction 
adds one vertex orbit and two edge orbits
to the graph $G$, irrespective of the value $t$.
We can define a
\emph{pentagon-multiplier construction} as follows. As in the case of the triangle-multiplier, this construction applies
to graphs that are not necessarily nut graphs.

\begin{proposition}
\label{prop:multiplier5}
Let $G$ be a bipartite connected $(2p)$-regular graph, where $p \geq 1$. Let $\mathcal{M}_5(G)$ be the graph obtained from $G$ by
fusing a bouquet of $p$ pentagons (i.e.\ $5$-cycles) to every vertex of $G$. Then $\mathcal{M}_5(G)$ is a nut graph.
\end{proposition}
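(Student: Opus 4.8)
The plan is to mimic the proofs of Propositions~\ref{prop:triangCyc} and~\ref{prop:multiplier3}: reduce the kernel condition on $\mathcal{M}_5(G)$ to an eigenvalue equation on $G$ itself. First I would fix notation for one fused pentagon. A pentagon fused to a vertex $v \in V(G)$ consists of $v$ together with four new vertices $a_1^{v,j}, a_2^{v,j}, a_3^{v,j}, a_4^{v,j}$ (for the $j$-th of the $p$ pentagons at $v$), forming the $5$-cycle $v\,a_1^{v,j}\,a_2^{v,j}\,a_3^{v,j}\,a_4^{v,j}\,v$. Let $\x \in \ker \mathbf{A}(\mathcal{M}_5(G))$ and write $c = \x(v)$. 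Applying the local condition~\eqref{eq:localCondition} successively at the four interior vertices of the pentagon forces
\begin{equation*}
\x(a_1^{v,j}) = c,\qquad \x(a_2^{v,j}) = -c,\qquad \x(a_3^{v,j}) = -c,\qquad \x(a_4^{v,j}) = c,
\end{equation*}
and in particular $\x(a_1^{v,j}) + \x(a_4^{v,j}) = 2c$. These relations are mutually consistent and hold for every value of $c$, so the entries on all pentagon vertices are determined by $\x|_{V(G)}$.

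Next I would write the local condition at a vertex $v \in V(G)$: its $2p$ neighbours in $G$ contribute $\sum_{u \in N_G(v)} \x(u)$, while each of the $p$ fused pentagons contributes $\x(a_1^{v,j}) + \x(a_4^{v,j}) = 2\x(v)$. Hence~\eqref{eq:localCondition} at $v$ reads $\sum_{u \in N_G(v)} \x(u) + 2p\,\x(v) = 0$, i.e.\ in matrix form $\mathbf{A}(G)\,\x|_{V(G)} = -2p\,\x|_{V(G)}$. Conversely, any eigenvector $\z$ of $\mathbf{A}(G)$ for the eigenvalue $-2p$ extends, via the formulae above, to a vector on $V(\mathcal{M}_5(G))$ satisfying \emph{all} local conditions (those at pentagon interior vertices hold by the algebra just performed, for any value of the attachment entry; the one at $v$ holds because $\mathbf{A}(G)\z = -2p\,\z$), and this extension vanishes exactly when $\z$ does. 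Therefore restriction to $V(G)$ is a linear isomorphism from $\ker \mathbf{A}(\mathcal{M}_5(G))$ onto the $(-2p)$-eigenspace of $\mathbf{A}(G)$.

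Finally I would invoke the hypotheses on $G$. Since $G$ is connected and $(2p)$-regular, its largest eigenvalue $2p$ is simple with the all-ones Perron eigenvector; since $G$ is bipartite its spectrum is symmetric about $0$, so $-2p$ is also an eigenvalue, it is likewise simple, and its eigenspace is spanned by the vector equal to $+1$ on one part of the bipartition and $-1$ on the other. Consequently $\eta(\mathcal{M}_5(G)) = 1$, and the (up to scaling unique) kernel eigenvector is $\pm 1$ on $V(G)$ and $\pm c = \pm 1$ on every pentagon vertex, hence full. Thus $\mathcal{M}_5(G)$ is a nut graph.

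The argument is essentially routine once the reduction is set up; the only point requiring care is the sign. Unlike the triangle-multiplier, where walking around a triangle yields the equation $\mathbf{A}(G)\x = 2t\,\x$, walking around a pentagon introduces a sign flip and produces $\mathbf{A}(G)\x = -2p\,\x$. It is precisely the bipartiteness hypothesis that guarantees $-2p$ is an eigenvalue of $G$ at all, and that its eigenspace is one-dimensional and spanned by a nowhere-zero vector — which is why this construction needs $G$ bipartite whereas the triangle-multiplier does not.
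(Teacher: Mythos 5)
Your proof is correct and follows essentially the same route as the paper: propagate the kernel entries around each fused pentagon to get the contribution $+2p\,\x(v)$ at the attachment vertex, reduce the kernel condition to $\mathbf{A}(G)\x = -2p\,\x$, and use connectedness, regularity and bipartiteness to conclude that $-2p$ is a simple eigenvalue with a full ($\pm 1$ on the bipartition classes) eigenvector. Your write-up merely spells out the restriction isomorphism and the sign bookkeeping in more detail than the paper's terse version.
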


\begin{proof}
We follow the pattern of the proof of Proposition~\ref{prop:multiplier3}. Consider a pentagon fused at a vertex $v \in V(G)$.
The vertices of the pentagon that are adjacent to $v$ both carry entry $+\x(v)$, while the remaining two vertices carry entry $-\x(v)$.

Equation~\eqref{eq:localTriang}
is replaced by
\begin{equation}
\sum_{u \in N(v)} \x(u)   +2p \, \x(v)  = 0 \qquad \text{for } v \in V(G),
\end{equation}
which in matrix form is $A(G) \x = -2p\,\x$. Since $G$ is connected, bipartite and $(2p)$-regular, it has a unique eigenvalue
$-2p$ in its spectrum, and the corresponding eigenvector is full.
\end{proof}

We note that in Proposition~\ref{prop:multiplier3}, any fused triangle could be replaced by a $(4q + 3)$-cycle for any $q \geq 0$.
Likewise, in Proposition~\ref{prop:multiplier5}, any fused pentagon may be replaced by a $(4q + 5)$-cycle for any $q \geq 0$.
In fact, these changes are just repeated applications of the subdivision construction on the triangles (resp.\ pentagons) of the graph
$\mathcal{M}_3(G)$ (resp.\ $\mathcal{M}_5(G)$). 

It seems natural to ask, what would happen if we fuse a mixture of triangles and pentagons to some vertices of a graph? Consider
the case where we fuse a triangle and a pentagon to a vertex in a graph.

\begin{proposition}
\label{prop:specSciriha}
Let $G$ be a nut graph and let $v \in V(G)$ be a vertex. Let $\mathcal{P}(G, v)$ be the graph obtained from $G$ by
fusing a triangle and a pentagon to vertex $v$. Then $\mathcal{P}(G, v)$ is a nut graph.
\end{proposition}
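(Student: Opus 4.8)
The plan is to follow the template of Propositions~\ref{prop:multiplier3} and~\ref{prop:multiplier5}: reduce the kernel of $\mathcal{P}(G, v)$ to the kernel of $G$ via a sign cancellation at $v$. First I would fix notation. Let $p_1, p_2$ be the two new vertices of the fused triangle, so that $vp_1$, $p_1 p_2$ and $p_2 v$ are edges, and let $q_1, q_2, q_3, q_4$ be the four new vertices of the fused pentagon, so that $vq_1$, $q_1 q_2$, $q_2 q_3$, $q_3 q_4$ and $q_4 v$ are edges. Then $\mathcal{P}(G, v)$ is a connected simple graph on $|V(G)| + 6$ vertices in which every vertex of $G$ other than $v$ retains its degree, while $\deg v$ increases by $4$.

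Next I would impose the local conditions \eqref{eq:localCondition} on the attached cells for a putative $\x \in \ker \mathbf{A}(\mathcal{P}(G,v))$. On the triangle, the conditions at $p_1$ and $p_2$ force $x_{p_1} = x_{p_2} = -x_v$, exactly as in Proposition~\ref{prop:triangCyc}. On the pentagon, the conditions at $q_1$ and $q_4$ give $x_{q_2} = x_{q_3} = -x_v$, and then the conditions at $q_2$ and $q_3$ give $x_{q_1} = x_{q_4} = x_v$; this is the sign pattern already used in Proposition~\ref{prop:multiplier5}. Consequently the contribution of the new neighbours of $v$ to the local condition at $v$ is $x_{p_1} + x_{p_2} + x_{q_1} + x_{q_4} = -2x_v + 2x_v = 0$, so that condition collapses to $\sum_{u \in N_G(v)} x_u = 0$, while at every vertex of $G$ other than $v$ the local condition is literally the same as in $G$.

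It follows that restriction to $V(G)$ carries $\ker \mathbf{A}(\mathcal{P}(G,v))$ into $\ker \mathbf{A}(G)$, and conversely that every $\y \in \ker \mathbf{A}(G)$ extends --- uniquely, by the forced values above --- to a vector of $\ker \mathbf{A}(\mathcal{P}(G,v))$ (one only needs to recheck the local conditions at $p_1, p_2, q_1, \ldots, q_4$ and at $v$, all of which hold by construction). These two maps are mutually inverse, so $\eta(\mathcal{P}(G,v)) = \eta(G) = 1$. Since $G$ is a nut graph, $\ker \mathbf{A}(G)$ is spanned by a full vector $\y$; its extension $\x$ then has $x_w = y_w \neq 0$ for $w \in V(G)$, in particular $x_v \neq 0$, so the forced entries $\pm x_v$ on the triangle and pentagon vertices are nonzero as well, and $\x$ is full. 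Hence $\mathcal{P}(G,v)$ is a nut graph.

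I do not expect a real obstacle: the argument is essentially a superposition of the triangle- and pentagon-multiplier computations applied at a single vertex. The one point requiring care is precisely the cancellation in the previous paragraph --- getting the signs on the pentagon right so that the $-2x_v$ coming from the triangle is exactly matched by the $+2x_v$ coming from the pentagon --- since this is what makes the local condition at $v$ reduce to that of $G$ with no change of eigenvalue.
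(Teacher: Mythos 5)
Your proof is correct and follows essentially the same route as the paper: the local conditions force the two triangle neighbours of $v$ to carry $-x_v$ and the pentagon to carry the pattern $x_v, -x_v, -x_v, x_v$, so the new contributions at $v$ cancel and the local condition at $v$ reduces to that of $G$, giving $\eta(\mathcal{P}(G,v)) = \eta(G)$ with full vectors corresponding to full vectors. The paper's version is just a terser statement of the same sign computation and kernel correspondence.
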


\begin{proof}
 Let $\x$ be a kernel eigenvector of $\mathcal{P}(G, v)$. 
Consider the two vertices on the fused triangle that are adjacent to $v$. Their entries in $\x$ are $-\x(v)$.
Now consider the two vertices of the fused pentagon that are adjacent to $v$. Their entries in $\x$ are $\x(v)$;
the entries of the remaining two vertices are $-\x(v)$. The local condition at vertex $v$ is simply $\sum_{u \in N(v)} \x(u) = 0$.
This means that $\eta(\mathcal{P}(G, v)) = \eta(G)$. Thus, $\mathcal{P}(G, v)$ is a nut graph if and only if $G$ is a nut graph.
\end{proof}

This is a special case of the coalescence construction devised by Sciriha \cite{ScirihaCoalescence}. Corollary~21 
in~\cite{ScirihaCoalescence} is equivalent
to the statement that coalescence of any two nut graphs $G_1$ and $G_2$ at any pair of vertices $v_1 \in V(G_1)$ and 
$v_2 \in V(G_2)$ produces a nut graph. The fusion of a triangle and a pentagon is one of the three Sciriha graphs; see Figure~\ref{fig:orbit_count}(a). 
Note that the above construction is used on a single vertex of a nut graph $G$. Had we used it
iteratively on all vertices of $G$, that would give us yet another multiplier construction.
In fact, various sorts of mixed objects can be envisaged. After the initial application of $\mathcal{M}_3$ or $\mathcal{M}_5$ on an appropriate
parent graph, which gives rise to a nut graph, the way lies open to application of the coalescence construction, locally or globally. See Figure~\ref{fig:weirdMonster} for 
examples.

\begin{figure}[!htb]
\centering
\begin{subfigure}{0.5\textwidth}
\centering
    \includegraphics[scale=0.8]{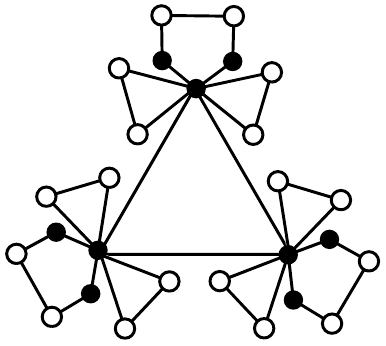}
    \caption{}
    \label{fig:maltMonster1}
\end{subfigure}%
\begin{subfigure}{0.5\textwidth}
\centering
    \includegraphics[scale=0.8]{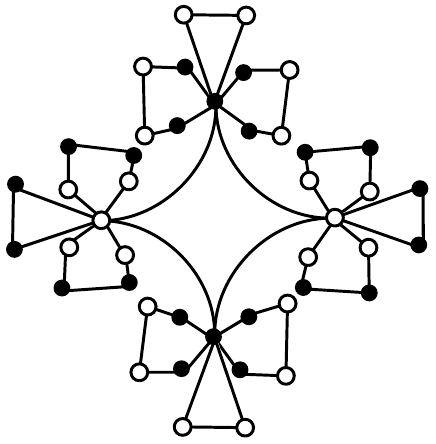}
    \caption{}
    \label{fig:maltMonster2}
\end{subfigure}%
\caption{These two nut graphs were obtained from (a) $C_3$ and (b) $C_4$ by an application of the $\mathcal{M}_3$ resp.\ $\mathcal{M}_5$, followed by repeated
application of Proposition~\ref{prop:specSciriha}. Entries in the kernel eigenvector in each graph are all of equal magnitude and represented by circles colour-coded for sign.}
\label{fig:weirdMonster}
\end{figure}

The triangle-multiplier and pentagon-multiplier constructions may be generalised to a $k$-multiplier construction:
Let $G$ be a $(2r)$-regular graph and let $k \geq 3$. Let $\mathcal{M}_k(G)$ be the graph obtained from $G$ by fusing a bouquet of $r$ $k$-cycles to every
vertex of $G$. In fact, Propositions~\ref{prop:multiplier3} and~\ref{prop:multiplier5} have natural generalisations to every $\mathcal{M}_k$, where \{$k \geq 3$ and $k \equiv 3 \pmod{4}$\}
and \{$k \geq 5$ and $k \equiv 1 \pmod{4}$\}, respectively. These generalisations follow immediately by the subdivision construction (see Section~\ref{sec:symmetrySec}).

\subsection{Characterisation of orders for nut graphs with \texorpdfstring{$\boldsymbol{2}$}{2} vertex orbits}
Observe that columns for prime values of $n$ are absent from Table~\ref{tbl:2}.
This is because the search did not reveal any examples in the range. As the next theorem shows,
this is no coincidence.

\begin{theorem}
\label{thm:numberTheoretic}
Let $G$ be a nut graph of order $n$ with precisely two vertex orbits. Then $n$ is not a prime number.
\end{theorem}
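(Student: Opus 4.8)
The plan is to suppose, for contradiction, that $n$ is prime, and to exploit the fact that the two orbits then have coprime sizes. Write $V(G) = \mathcal{V}_1 \sqcup \mathcal{V}_2$ for the two vertex orbits and $n_i = |\mathcal{V}_i|$, so $n = n_1 + n_2$. The orbit partition is equitable, so for each $i,j$ there is a well-defined number $d_{ij}$ of neighbours in $\mathcal{V}_j$ of any vertex of $\mathcal{V}_i$; counting the edges between the orbits in two ways gives $n_1 d_{12} = n_2 d_{21}$, a quantity that is positive since $G$ is connected with two orbits. As $\gcd(n_1,n_2)$ divides $n_1+n_2 = n$ and is at most $n_1 < n$, primality of $n$ forces $\gcd(n_1,n_2)=1$. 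Hence $n_1 \mid d_{21}$ and $n_2 \mid d_{12}$, and since $G$ is simple (so $d_{21}\le n_1$ and $d_{12}\le n_2$) we get $d_{21}=n_1$ and $d_{12}=n_2$: every vertex of one orbit is adjacent to every vertex of the other.

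Next I would split on whether $\Aut(G)$ contains a \emph{sign-reversing} automorphism, i.e.\ some $\alpha$ with $\x^\alpha=-\x$ for the kernel eigenvector $\x$; by Lemma~\ref{lem:7gen}(a) the only alternative is $\x^\alpha=\x$. If a sign-reversing $\alpha$ exists, it stabilises each orbit setwise and, reversing all signs, restricts to a bijection from the positive-entry vertices of $\mathcal{V}_i$ onto the negative-entry vertices of $\mathcal{V}_i$ (these partition $\mathcal{V}_i$ since $G$ is a nut graph); hence $n_1$ and $n_2$ are both even, contradicting $\gcd(n_1,n_2)=1$ — equivalently, $n$ would be an even integer at least $7$, hence composite.

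If instead no sign-reversing automorphism exists, then $\x^\alpha=\x$ for every $\alpha\in\Aut(G)$, and since $\Aut(G)$ acts transitively on each orbit, $\x$ is constant on $\mathcal{V}_1$ and on $\mathcal{V}_2$, say with (nonzero) values $a_1$ and $a_2$. Evaluating the local condition $\sum_{u\in N(v)}x_u=0$ at a vertex of $\mathcal{V}_1$ and at a vertex of $\mathcal{V}_2$ yields a homogeneous $2\times 2$ system with the nonzero solution $(a_1,a_2)$, whose vanishing determinant reads $d_{11}d_{22}=d_{12}d_{21}=n_1 n_2$. Since $n_1 n_2>0$, both $d_{11}$ and $d_{22}$ are positive, which forces $n_1,n_2\ge 2$, and then the simple-graph bounds $d_{11}\le n_1-1$, $d_{22}\le n_2-1$ give $n_1 n_2 = d_{11}d_{22}\le (n_1-1)(n_2-1)<n_1 n_2$, a contradiction. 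This settles both cases and completes the proof.

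The bookkeeping above is entirely routine; the one load-bearing step, and the place to take care, is the non-sign-reversing case, where everything hinges on Lemma~\ref{lem:7gen} together with transitivity within an orbit to conclude that $\x$ is constant on each orbit (so that the local conditions collapse to a $2\times 2$ system). A minor point is to dispose of the degenerate possibility $n_i=1$ — excluded as soon as one knows $d_{ii}\ge 1$ — and to keep the even-order subcase separate, but neither presents a real difficulty.
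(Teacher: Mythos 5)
Your proof is correct and follows essentially the same route as the paper: the edge count between orbits plus primality forces $d_{21}=n_1$, $d_{12}=n_2$, the kernel eigenvector is shown to be constant on each orbit (the paper packages your sign-reversing case as Corollary~\ref{cor:numberTheoretic}, using that an odd-order nut graph admits no sign-reversing automorphism), and the resulting $2\times 2$ local system is contradictory because $d_{11}d_{22}\le (n_1-1)(n_2-1)<n_1n_2$. The only cosmetic differences are that you derive $d_{21}=n_1$, $d_{12}=n_2$ via $\gcd(n_1,n_2)=1$ where the paper argues $n\mid d_{12}+d_{21}$, and you phrase the endgame as a vanishing-determinant contradiction rather than a positive determinant forcing $a_1=a_2=0$.
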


The next proposition will be useful in the proof of the above theorem.

\begin{proposition}
Let $G$ be a nut graph and let \,$\x = [x_1\ \ldots\ x_n]^\intercal \in \ker \mathbf{A}(G)$. 
If there exists a sign-reversing automorphism $\alpha \in \Aut(G)$ then
all orbits are of even size. Moreover, for every $j$, half of the entries $\{ x_i \mid i \in \mathcal{V}_j \}$ are 
positive, and the other half are negative.
\end{proposition}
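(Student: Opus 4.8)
The plan is to use directly the two defining facts available here: that $\x$ has no zero entry (since $G$ is a nut graph) and that a sign-reversing automorphism negates $\x$, together with the elementary observation that any automorphism permutes each vertex orbit. No hypothesis on $\mathfrak{G}(G)$ is needed, so the argument is shorter than that of Lemma~\ref{lem:allzerolemma} or Proposition~\ref{prop:8}.

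First I would unpack the hypothesis. By Lemma~\ref{lem:7gen}(a), every $\beta \in \Aut(G)$ satisfies $\x^\beta = \pm\x$, and ``$\alpha$ sign-reversing'' means precisely $\x^\alpha = -\x$, i.e.\ $x_{i^\alpha} = -x_i$ for all $i \in V(G)$. Since $G$ is a nut graph, $x_i \neq 0$ for every $i$, so $x_{i^\alpha}$ and $x_i$ always have opposite nonzero signs; in particular $i^\alpha \neq i$, so $\alpha$ is fixed-point free.

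Next, fix a vertex orbit $\mathcal{V}_j$. Because automorphisms map orbits to orbits, $\alpha$ restricts to a bijection $\alpha|_{\mathcal{V}_j}\colon \mathcal{V}_j \to \mathcal{V}_j$. Set $P_j = \{\, i \in \mathcal{V}_j : x_i > 0 \,\}$ and $N_j = \{\, i \in \mathcal{V}_j : x_i < 0 \,\}$, so that $\mathcal{V}_j = P_j \sqcup N_j$ since no entry vanishes. From $x_{i^\alpha} = -x_i$ we get that $\alpha|_{\mathcal{V}_j}$ sends $P_j$ into $N_j$ and $N_j$ into $P_j$; being injective, $\alpha$ thus gives a bijection $P_j \to N_j$ (with inverse the restriction of $\alpha^{-1}$). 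Hence $|P_j| = |N_j|$, which yields both assertions at once: $|\mathcal{V}_j| = |P_j| + |N_j| = 2\,|P_j|$ is even, and exactly half of the entries $\{\, x_i : i \in \mathcal{V}_j \,\}$ are positive and half negative.

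I do not expect a real obstacle here; the only point that warrants a moment's care is the claim that $\alpha$ genuinely acts \emph{within} each vertex orbit, so that the counting localises orbit-by-orbit, and this is immediate from the definition of a vertex orbit. (One could instead quote the displayed counting conclusion inside the proofs of Lemma~\ref{lem:allzerolemma} or Proposition~\ref{prop:8}, but those presuppose structural conditions on $\mathfrak{G}(G)$, whereas the direct argument above needs only the existence of $\alpha$.)
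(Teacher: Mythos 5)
Your proof is correct and follows essentially the same idea as the paper's: the sign-reversing automorphism pairs positive-entry vertices with negative-entry vertices inside each orbit, forcing $|P_j|=|N_j|$. The paper phrases this via the disjoint-cycle decomposition of $\alpha$ (each cycle has even length and carries a perfect matching between opposite signs), whereas you argue directly that $\alpha$ restricts to a bijection $P_j \to N_j$; this is a cosmetic difference only, and your version is if anything slightly cleaner.
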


\begin{proof}
Think of the automorphism $\alpha$ as a product of disjoint cycles. Note that elements of any given cycle of $\alpha$ are contained
in the same vertex orbit. Since $\alpha$ is a sign-reversing automorphism, every vertex
$i$ is mapped to a vertex $i^\alpha$ carrying the opposite sign (i.e.\ $x_i \cdot x_{i^\alpha} < 0$). Therefore, every
cycle of $\alpha$ is of even length and contains a perfect matching whose edges join vertices with entries
of opposite sign. Hence, $\{ x_i \mid i \in \mathcal{V}_j \}$ contains the same number of positive and negative elements.
\end{proof}

\begin{corollary}
\label{cor:numberTheoretic}
Let $G$ be a nut graph of order $n$. If $n$ is odd then $\Aut(G)$ does not contain any sign-reversing automorphism.
Moreover, kernel eigenvector entries are constant within a given orbit. 
\end{corollary}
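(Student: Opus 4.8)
The plan is to read this corollary off directly from the unnumbered proposition immediately preceding it, together with Lemma~\ref{lem:7gen}. For the first assertion I would argue by contraposition: suppose $\Aut(G)$ contains a sign-reversing automorphism $\alpha$. By that proposition, every vertex orbit of $G$ then has even size. Since the vertex orbits partition $V(G)$, the order $n$ is a sum of even numbers and hence even. Consequently, if $n$ is odd, no sign-reversing automorphism can exist.

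For the second assertion I would invoke Lemma~\ref{lem:7gen}(a), which gives $\x = \pm \x^\alpha$ for every $\alpha \in \Aut(G)$. I would first observe that the sign $-$ occurs precisely when $\alpha$ is sign-reversing: if $\x = -\x^\alpha$ then $x_i = -x_{i^\alpha}$ for every $i$, and since $\x$ is full this says that $\alpha$ sends each vertex to one whose kernel entry has the opposite sign. Having just shown that no such automorphism exists when $n$ is odd, we conclude $\x = \x^\alpha$, i.e.\ $x_i = x_{i^\alpha}$, for every $\alpha \in \Aut(G)$. Finally, given two vertices $i, j$ lying in the same vertex orbit, choosing $\alpha \in \Aut(G)$ with $i^\alpha = j$ yields $x_i = x_{i^\alpha} = x_j$; hence the kernel eigenvector is constant on each orbit.

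Since the corollary is a short deduction from results already in hand, I do not expect a genuine obstacle. The only point that needs a moment's care is the equivalence between ``$\x = \x^\alpha$ for every $\alpha$'' and the absence of sign-reversing automorphisms, which rests on the defining property that a nut-graph kernel eigenvector has no zero entry: without fullness one could not conclude that $\x = -\x^\alpha$ forces $\alpha$ to reverse the sign at every vertex.
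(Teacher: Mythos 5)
Your proposal is correct and follows exactly the route the paper intends: the paper states this corollary without proof precisely because it falls out of the preceding unnumbered proposition (odd $n$ forces an odd-sized orbit, so no sign-reversing automorphism can exist) together with Lemma~\ref{lem:7gen}(a) and the fullness of the kernel eigenvector. Your remark that fullness is what makes ``$\x = -\x^\alpha$'' equivalent to ``$\alpha$ is sign-reversing'' is the right point of care, and nothing is missing.
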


\begin{proof}[Proof of Theorem~\ref{thm:numberTheoretic}]
Let $n_1 = |\mathcal{V}_1|$ and $n_2 = |\mathcal{V}_2|$ with $n = n_1 + n_2$ and $n_1, n_2 \geq 1$. Let $d_{ij}$ be the number of neighbours
of a vertex $v \in \mathcal{V}_i$ that reside in $\mathcal{V}_j$, where $1 \leq i, j \leq 2$. Since $G$ is simple and connected,
$0 \leq d_{ii} < n_i$ and $1 \leq d_{ji} \leq n_i$. The number of inter-orbit edges is
\begin{equation}
\label{eq:countEdges}
d_{12} n_1 = d_{21} n_2.
\end{equation}
The proof proceeds by contradiction. Suppose that $n$ is a prime.
The case $n = 2$ is trivial, since there are no nut graphs on $2$ vertices. As $n$ is odd, kernel eigenvector entries within each 
orbit are constant by Corollary~\ref{cor:numberTheoretic}. Let $a_i$ be the entry in $\mathcal{V}_i$.
The local conditions are
\begin{equation}
\label{eq:localTwoOrb}
\begin{aligned}
d_{11} a_1 + d_{12} a_2 & = 0, \\
d_{21} a_1 + d_{22} a_2 & = 0.
\end{aligned}
\end{equation}
First, we note that Equation~\ref{eq:countEdges} has a unique solution. Equation~\ref{eq:countEdges} implies
$$
d_{12} (n_1 + n_2) = (d_{21} + d_{12}) n_2.
$$
Since $n_1 + n_2$ is a prime factor, it divides either $d_{21} + d_{12}$ or $n_2$. As it clearly cannot divide $n_2$,
it divides $d_{21} + d_{12}$. But $d_{21} + d_{12} \leq n_1 + n_2$. Divisibility is possible only in the case where $d_{21} = n_1$
and $d_{12} = n_2$. For this case,
Equation~\eqref{eq:localTwoOrb} can be expressed in matrix form as
\begin{equation}
\begin{bmatrix}
n_1 &  d_{22}  \\
d_{11} & n_2  
\end{bmatrix}
\begin{bmatrix}
a_1  \\
a_2
\end{bmatrix} = 
\begin{bmatrix}
0  \\
0
\end{bmatrix}
\end{equation}
The determinant of the matrix is $n_1 n_2 - d_{11} d_{22} > 0$ and therefore $a_1 = a_2 = 0$.
This contradicts the fact that $G$ is a nut graph. Therefore, $n$ cannot be prime.
\end{proof}

We have justified the claim that nut graphs with two vertex orbits cannot be of prime order $n$.
We add to the picture by showing that a nut graph with two vertex orbits exists for all composite 
orders $n \geq 9$.

\begin{theorem}
\label{ref:compositeNutExistence}
Let $n \geq 9$ such that $n$ is not a prime. Then there exists a nut graph $G$ of order $n$ with 
$o_v(G) = 2$.
\end{theorem}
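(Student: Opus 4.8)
The plan is to exhibit, for every composite $n\ge 9$, an explicit nut graph of order $n$ with two vertex orbits, all drawn from the families and constructions already established; the argument is a short case analysis on the divisibility of $n$, and the real content is contained in Propositions~\ref{prop:triangCyc}, \ref{prop:roseWindow} and~\ref{prop:multiplier3}. This complements Theorem~\ref{thm:numberTheoretic}, giving a complete characterization of the orders realized by nut graphs with two vertex orbits.

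First I would dispose of all odd $n$ (every composite odd integer is $\ge 9$) by a single use of the triangle-multiplier. Let $q$ be the smallest prime factor of $n$; since $n$ is odd and composite, $q$ is odd, $q\ge 3$, and $m:=n/q$ has all its prime factors $\ge q$, hence $m\ge q$. Put $t:=(q-1)/2\ge 1$ and $H:=\Circ(\mathbb{Z}_m,\{1,2,\ldots,t\})$. Because $m\ge q=2t+1$, the graph $H$ is a simple, connected, vertex-transitive, $(2t)$-regular graph on $m$ vertices, so by Proposition~\ref{prop:multiplier3} the graph $\mathcal{M}_3(H)$ is a nut graph, and it has $(2t+1)m=qm=n$ vertices. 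It has exactly two vertex orbits: the skeleton vertices (degree $4t$) and the triangle vertices (degree $2$) cannot be exchanged, so $o_v\ge 2$, while vertex-transitivity of $H$ together with the obvious local symmetries (permuting the $t$ triangles incident with a skeleton vertex, and flipping each triangle) make each of these two classes a single orbit; this is precisely Proposition~\ref{prop:multiConstrSym} applied to the vertex-transitive parent $H$.

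For even $n$ (so $n\ge 10$) I would split once more. If $3\mid n$, write $n=3m$ with $m=n/3\ge 4$; Proposition~\ref{prop:triangCyc} then gives the nut graph $\mathcal{T}_m$ of order $n$, whose two vertex orbits are the $m$ cycle vertices (degree $4$) and the $2m$ triangle vertices (degree $2$). If $3\nmid n$, write $n=2m$ with $m=n/2\ge 5$ and $m\not\equiv 0\pmod 3$; by Proposition~\ref{prop:roseWindow} the Rose Window graph $R_m(1,2)$ is then a nut graph of order $n$, with the two vertex orbits $\{v_0,\ldots,v_{m-1}\}$ and $\{u_0,\ldots,u_{m-1}\}$, as recorded in Section~\ref{subsec:fam23}. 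Since every composite $n\ge 9$ lies in exactly one of these three cases, the theorem follows.

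The exercise has no genuine obstacle; the point needing attention is the bookkeeping that these three constructions between them realize \emph{all} composite $n\ge 9$. The two families of Section~\ref{subsec:fam23} already cover every $n$ divisible by $2$ or by $3$; the remaining values, those with $\gcd(n,6)=1$, are composite of the shape $qm$ with $q\ge 5$, and these are exactly the ones picked up by the triangle-multiplier over a circulant skeleton. The only calculation one must be slightly careful with is confirming in each family that the vertex-orbit count is exactly $2$ — the inequality $o_v\ge 2$ is immediate from the degree sequence, and $o_v\le 2$ follows from vertex-transitivity of the underlying skeleton (via Proposition~\ref{prop:multiConstrSym}) or, for $\mathcal{T}_m$ and $R_m(1,2)$, from the explicit symmetries of those graphs.
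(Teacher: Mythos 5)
Your proof is correct and takes essentially the same route as the paper's: even orders are handled by $\mathcal{T}_{n/3}$ or $R_{n/2}(1,2)$ according to divisibility by $3$, and odd composite orders by applying $\mathcal{M}_3$ to the circulant $\Circ(\mathbb{Z}_{n/q}, \{1,\ldots,(q-1)/2\})$ for $q$ the smallest prime factor, with Proposition~\ref{prop:multiConstrSym} supplying the orbit count. The only differences are cosmetic (ordering of the cases, and your slightly more explicit check that $o_v$ equals exactly $2$ in each family).
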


\begin{proof}
Let $n = p_1p_2 \cdots p_k$ be the decomposition of $n$ into prime factors.
Without loss of generality, we may assume that $p_1 \leq p_2 \leq \cdots \leq p_k$.

\vspace{0.5\baselineskip}
\noindent
\textbf{Case 1:} Suppose that $p_1 = 2$. If $n \not\equiv 0 \pmod{3}$ then Proposition~\ref{prop:roseWindow} guarantees a solution.
If $n \equiv 0 \pmod{3}$ then Proposition~\ref{prop:triangCyc} guarantees a solution. 

\vspace{0.5\baselineskip}
\noindent
\textbf{Case 2:} Now suppose that $p_1 > 2$. Clearly, $p_1$ is an odd integer. The strategy is to find a vertex transitive
graph $H$ of order $\widetilde{n} = n / p_1$ and degree $\widetilde{d} = p_1 - 1$. Then use the triangle-multiplier construction
to obtain $\mathcal{M}_3(H)$.
Since $n$ is not a prime $n / p_1 \geq p_1$ and so $\widetilde{n} > \widetilde{d}$. Note that $\widetilde{d}$ is an even number.
The circulant graph $H = \Circ(\widetilde{n}, \{1, 2, \ldots, \widetilde{d}/2\})$ has the prescribed order and degree and it is of course 
vertex transitive as required. The graph $\mathcal{M}_3(H)$ is of order $n$. By Proposition~\ref{prop:multiConstrSym}, this graph has two vertex orbits.
\end{proof}

Theorem~\ref{ref:compositeNutExistence} shows that there is at least one non-zero entry in every column of Table~\ref{tbl:2}. However, we believe that 
row $o_e = 3$ by itself consists of non-zero entries.

\begin{conjecture}
\label{conj:conjecturev2e3}
Let $n \geq 9$ such that $n$ is not a prime. Then there exists a nut graph $G$ of order $n$ with 
$o_v(G) = 2$ \emph{and} $o_e(G) = 3$. 
\end{conjecture}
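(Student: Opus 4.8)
The plan is first to dispose of all cases except odd composites coprime to $6$ with the two families already constructed, and then to grind the remaining orders with the triangle-multiplier, flagging honestly where the method runs out. Write $n = p_1 p_2 \cdots p_k$ with $p_1 \le \cdots \le p_k$. If $3 \mid n$ then $n/3 \ge 3$ and $\mathcal{T}_{n/3}$ of Proposition~\ref{prop:triangCyc} has order $n$ with $(o_v, o_e) = (2,3)$; if $2 \mid n$ but $3 \nmid n$ then $n/2 \ge 5$ and $3 \nmid (n/2)$, so the Rose Window graph $R_{n/2}(1,2)$ of Proposition~\ref{prop:roseWindow} has order $n$ with $(o_v, o_e) = (2,3)$ (the instances $n = 9$ and $n = 10$ are covered here). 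What is left is $n$ odd, composite, with $\gcd(n,6) = 1$; the first such orders are $25, 35, 49, 55, 65, \dots$

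\noindent\textbf{The triangle-multiplier step.} For such an $n$ I would try to write $n = m q$ with $q \ge 3$ odd, locate a connected $(q-1)$-regular \emph{edge-transitive} graph $G$ on $m$ vertices, and set $H = \mathcal{M}_3(G)$. By Proposition~\ref{prop:multiplier3}, $H$ is a nut graph, and $|V(H)| = (2t+1)|V(G)| = q m = n$ with $t = (q-1)/2 \ge 1$. Since $G$ is vertex-transitive and $q \ge 3$, the $m$ vertices of $G$ (each of degree $2(q-1)$) and the $(q-1)m$ triangle vertices (each of degree $2$) are precisely the two vertex orbits of $H$, while the three edge types --- $G$-edges, triangle base edges, triangle side edges --- each form a single orbit because $G$ is edge-transitive; this is the content of Proposition~\ref{prop:multiConstrSym}, so $o_e(H) = 3$. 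It therefore suffices to produce, for every remaining $n$, a factorisation $n = mq$ carrying such a parent. Complete graphs $K_q$ handle all odd squares $n = q^2$; complete multipartite graphs $K_{s \times t}$ with $s,t$ odd and $s \ge 3$ handle $n = st\bigl((s-1)t+1\bigr)$; Paley graphs $P_q$ with $q \equiv 1 \pmod 4$ a prime power handle $n = \tfrac12 q(q+1)$; and any further even-valent edge-transitive graph on an odd number of vertices extends the list.

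\noindent\textbf{The obstacle.} The multiplier step alone will not close the conjecture: already $n = 35$ resists it, since its only decompositions $35 = mq$ with $q \ge 3$ are $(m,q) = (7,5)$ and $(5,7)$, calling respectively for a $4$-regular edge-transitive graph on $7$ vertices (a routine check of the two $4$-regular graphs on $7$ vertices shows that neither is edge-transitive) or a $6$-regular graph on $5$ vertices (impossible). Nor does replacing fused triangles by longer fused cycles help, because for $k > 3$ the interior vertices of a fused $k$-cycle sit at several distances from the attachment point and hence split into several orbits, so $\mathcal{M}_3$ is essentially the only multiplier preserving $o_v = 2$. Closing the gap thus requires a genuinely new infinite family of nut graphs with two vertex orbits whose sizes $n_1, n_2$ have opposite parity, tunable to every admissible order. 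By the $2 \times 2$ linear system isolated in the proof of Theorem~\ref{thm:numberTheoretic}, any such graph must carry intra-orbit edges in \emph{both} orbits and must satisfy the orbit-degree relation $d_{11}d_{22} = d_{12}d_{21}$ (together with $d_{12}n_1 = d_{21}n_2$). I would therefore look for a bicirculant-type family --- in the spirit of generalized Petersen graphs, $I$-graphs, or Rose Window graphs, but with cycles of unequal length --- realising these constraints, and I expect the hard part, exactly as in the three-case analysis of Proposition~\ref{prop:roseWindow}, to be the number-theoretic bookkeeping needed to certify, for every residue of the parameter, that the relevant parametrised matrix has a one-dimensional kernel spanned by a full vector, so that the construction yields a nut graph rather than merely a core graph.
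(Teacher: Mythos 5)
The statement you were asked to prove is stated in the paper as a \emph{conjecture}, and the paper does not prove it either: it only records the same partial evidence you assemble (even orders via $R_{n/2}(1,2)$, multiples of $3$ via $\mathcal{T}_{n/3}$, and odd composites handled by applying $\mathcal{M}_3$ to an even-valent edge-transitive parent, with perfect squares covered by $K_q$), lists the unresolved orders in Table~\ref{tbl:evilOrders}, and leaves the rest as Question~\ref{quest:quest24}. Your treatment is essentially identical to the paper's, down to identifying $n=35$ as the first order where the multiplier method fails for exactly the reason the paper gives (no $4$-regular edge-transitive graph on $7$ vertices), and you are right that no complete proof follows from these tools. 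Two small points in your favour beyond the paper: the explicit mention of complete multipartite and Paley parents (the paper's resolved entries such as $n=91$ via $\Circ(13,\{1,3,4\})$ are in fact Paley graphs, so this is consistent rather than new coverage), and the observation that any family covering the remaining orders must have intra-orbit edges in both orbits with $d_{11}d_{22}=d_{12}d_{21}$, which follows correctly from Corollary~\ref{cor:numberTheoretic} and the $2\times 2$ system in the proof of Theorem~\ref{thm:numberTheoretic} and is a useful constraint the paper does not state. One minor slip: $n=9$ is odd, so it is covered by the $\mathcal{T}_3$ case, not by the Rose Window case as your parenthetical suggests.
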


Conjecture~\ref{conj:conjecturev2e3} holds for all even numbers (covered by Case 1 of the proof), all multiples of $3$ (covered by Proposition~\ref{prop:triangCyc}) and 
all perfect squares (in that case graph $H$ in the proof is a complete graph). 
The conjecture can also be validated for those values of $n$, such that there exists an edge transitive graph $H$
of order $\widetilde{n}$ and degree $\widetilde{d}$ in the proof of Theorem~\ref{ref:compositeNutExistence}.
Table~\ref{tbl:evilOrders} shows the orders up to $300$ that are not resolved by anything mentioned thus far.
For some of these orders we were able to provide graph $H$ (see proof of Theorem~\ref{ref:compositeNutExistence}).
N/A in the table indicates that no such graph $H$ exists (based on the census by Conder and Verret~\cite{Conder2019,marstonCensus}).
Order $35$, for example, cannot be resolved in this way, because
there is only one vertex transitive graph of order $7$ and degree $4$, namely $\Circ(7, \{1, 2\})$, but it is not edge transitive, and so a completely different approach is required.
For order $295$, $H$ would have to be a $4$-regular edge transitive graph of order $59$, but no such graph is known (see~\cite{Potocnik2020,Potocnik2015,Potocnik2013,Potocnik2009}).
The classes of edge transitive circulants are provided in~\cite{Onkey1996} could be used to resolve some orders beyond Table~\ref{tbl:evilOrders}.
All graphs $H$ provided in Table~\ref{tbl:evilOrders} are circulants. However, for some orders there are non-circulant alternative possibilities, e.g.\ the graph $C_5 \cart C_5$ 
 could be used for $n = 125$ and $\Cay(\mathbb{Z}_5 \times \mathbb{Z}_5, \{(0, 1), (1, 0), (1, 1)\})$ for order $n = 175$.

\begin{table}[!htb]
\centering
\begin{tabular}{ | r @{$\;=\;$} l | l |}
\hline
\multicolumn{2}{|c|}{Order} & Graph $H$ \\
\hline \hline
$35$ & $5 \cdot 7$ & N/A \\ 
$55$ & $5 \cdot 11$ & N/A \\
$65$ & $5 \cdot 13$ & $\Circ(13, \{1, 5\})$ \\
$77$ & $7 \cdot 11$ & N/A \\
$85$ & $5 \cdot 17$ & $\Circ(17, \{1, 4\})$ \\
$91$ & $7 \cdot 13$ & $\Circ(13, \{1, 3, 4\})$ \\
$95$ & $5 \cdot 19$ & N/A \\
$115$ & $5 \cdot 23$ & N/A \\
$119$ & $7 \cdot 17$ & N/A \\
$125$ & $5^3$ & $\Circ(25, \{1, 7]\}$ \\ 
$133$ & $7 \cdot 19$ & $\Circ(19, \{1, 7, 8\})$ \\
$143$ & $11 \cdot 13$ & N/A \\
$145$ & $5 \cdot 29$ & $\Circ(29, \{1, 12\})$ \\
$155$ & $5 \cdot 31$ & N/A \\
$161$ & $7 \cdot 23$ & N/A \\
$175$ & $5^2 \cdot 7$ & $\Circ(35, \{1, 6\})$ \\ 
$185$ & $5 * 37$ & Circ(37, [1, 6]) \\
\hline
\end{tabular}
\qquad
\begin{tabular}{ | r @{$\;=\;$} l | l |}
\hline
\multicolumn{2}{|c|}{Order} & Graph $H$ \\
\hline \hline
$187$ & $11 \cdot 17$ & N/A \\
$203$ & $7 \cdot 29$ & N/A \\
$205$ & $5 \cdot 41$ & $\Circ((41, \{1, 9\})$ \\
$209$ & $11 \cdot 19$ & N/A \\
$215$ & $5 \cdot 43$ & N/A \\
$217$ & $7 \cdot 31$ & $\Circ(31, \{1, 5, 6\})$ \\
$221$ & $13 \cdot 17$ & N/A \\
$235$ & $5 \cdot 47$ & N/A \\
$245$ & $5 \cdot 7^2$ & $\Circ(35, \{1, 11, 16\})$ \\
$247$ & $13 \cdot 19$ & N/A \\
$253$ & $11 \cdot 23$ & N/A \\
$259$ & $7 \cdot 37$ & $\Circ(37, \{1, 10, 11\})$ \\
$265$ & $5 \cdot 53$ & $\Circ(53, \{1, 23\})$  \\ 
$275$ & $5^2 \cdot 11$ & $\Circ(25, \{1, 4, 6, 9, 11\})$ \\ 
$287$ & $7 \cdot 41$ & N/A \\ 
$295$ & $5 \cdot 59$ & Unknown \\ 
$299$ & $13 \cdot 23$ & N/A \\ 
\hline
\end{tabular}
\caption{List of all integers $9 \leq n \leq 300$ that are not prime, not even, not multiples of three and not perfect squares.
Where a graph $H$ is listed, it proves Conjecture~\ref{conj:conjecturev2e3} for the particular order. N/A indicates that no graph
$H$ with the desired properties exists. For order $295$ it is not known whether such a graph exists.}
\label{tbl:evilOrders}
\end{table}

\begin{question}
\label{quest:quest24}
Find a nut graph $G$ with $2$ vertex orbits and $3$ edge orbits for orders $n = 35, 55, 77, 95, \ldots$ and other non-resolved orders.
\end{question}

\section{How constructions influence symmetry}
\label{sec:symmetrySec}

Several constructions have been described for producing a larger nut graph when applied to a smaller nut graph $G$;
literature examples include the bridge construction (insertion of two vertices on a bridge) \cite{ScirihaGutman-NutExt}, the subdivision construction (insertion of four vertices
on an edge) \cite{ScirihaGutman-NutExt} and the so-called Fowler construction \cite{GPS}, which has the net result of introducing $2d$ new vertices in the
proximity of a vertex $v$ of degree $d$. In Section~\ref{sec:multiConstructions} we have given examples of constructions that do not require 
the parent $G$ to be a nut graph. Here, we are interested in the implications of the various constructions for numbers of vertex and edge orbits of
the constructed nut graph.

\begin{proposition}
\label{prop:bridgeStuff}
Let $G$ be a nut graph and let $e = uv \in E(G)$ be a bridge in $G$. Let $\mathcal{E}$ be the orbit of the bridge $e$ under $\Aut(G)$.
The graph obtained from $G$ by applying the bridge construction on every edge from $\mathcal{E}$, denoted  $B(G, \mathcal{E})$, 
is a nut graph and $\Aut(G) \leq \Aut(B(G, \mathcal{E}))$. 

If, in addition, $\Aut(G) \cong \Aut(B(G, \mathcal{E}))$ then the following statements hold.
\begin{enumerate}[label=(\roman*)]
\item
If there exists an element $\varphi \in \Aut(G)$ such that $u^\varphi = v$ and $v^\varphi = u$, then
$o_v(B(G, \mathcal{E})) = o_v(G) + 1$ and $o_e(B(G, \mathcal{E})) = o_e(G) + 1$.
\item
If there is no element $\varphi \in \Aut(G)$ such that $u^\varphi = v$ and $v^\varphi = u$, then
$o_v(B(G, \mathcal{E})) = o_v(G) + 2$ and $o_e(B(G, \mathcal{E})) = o_e(G) + 2$.
\end{enumerate}
\end{proposition}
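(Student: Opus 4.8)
The plan is to prove the three assertions in order. \textbf{That $B(G,\mathcal{E})$ is a nut graph.} I would first reduce to subdividing a single bridge: since $\mathcal{E}$ is an $\Aut(G)$-orbit of the bridge $e$, every edge of $\mathcal{E}$ is a bridge, and subdividing a bridge leaves the remaining edges bridges, so one may iterate over the finite set $\mathcal{E}$. For one bridge $e = uv$, write $G - e = G_u \sqcup G_v$ with $u \in V(G_u)$ and $v \in V(G_v)$, and let the inserted path be $u - a - b - v$. For any $\x' \in \ker\mathbf{A}(B(G,\{e\}))$ the local conditions at $a$ and $b$ give $x'_b = -x'_u$ and $x'_a = -x'_v$; reversing the sign of $\x'$ throughout $G_v$ then produces a vector in $\ker\mathbf{A}(G)$ (only the equations at $u$ and $v$ need checking, and the sign change is compensated exactly by the substituted entries $x'_a, x'_b$). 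This is an injection $\ker\mathbf{A}(B(G,\{e\})) \hookrightarrow \ker\mathbf{A}(G)$, so the nullity stays $\le 1$; running the construction backwards exhibits a full kernel vector. This is the classical double-subdivision construction of \cite{ScirihaGutman-NutExt}, which I would cite rather than belabour.

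\textbf{That $\Aut(G) \le \Aut(B(G,\mathcal{E}))$.} Every $\alpha \in \Aut(G)$ permutes $\mathcal{E}$, so it extends uniquely to $\hat\alpha$ by carrying the interior of each subdivided edge rigidly onto the interior of its image (matching the induced pairing of endpoints), and $\alpha \mapsto \hat\alpha$ is an injective homomorphism. Now I would bring in the extra hypothesis together with finiteness: $\Aut(G)$ and $\Aut(B(G,\mathcal{E}))$ are finite, so an abstract isomorphism $\Aut(G) \cong \Aut(B(G,\mathcal{E}))$ forces the embedding above to be onto. Hence every automorphism of $B(G,\mathcal{E})$ is some $\hat\alpha$; in particular it preserves $V(G)$ and the inserted-vertex set $W$ setwise, and restricts on $V(G)$ to the corresponding element of $\Aut(G)$. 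Consequently the orbits of $B(G,\mathcal{E})$ contained in $V(G)$ are exactly the $o_v(G)$ vertex orbits of $G$, and the orbits among the non-subdivided edges are exactly the $o_e(G) - 1$ edge orbits of $G$ other than $\mathcal{E}$; no old orbit can merge with a new one because $W$ is preserved.

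\textbf{The orbit counts.} It remains to count orbits among $W$ and among the $3\lvert\mathcal{E}\rvert$ new path-edges, and this is where the dichotomy enters. I would classify the arcs (ordered endpoint pairs) of the edges of $\mathcal{E}$ under $\Aut(G)$: there is one arc orbit precisely in case (i) (when the flipping automorphism $\varphi$ exists, so $\Stab(e)$, and hence every edge-stabiliser in $\mathcal{E}$, contains a flip) and exactly two in case (ii). In case (i), $\varphi$ and transitivity on $\mathcal{E}$ show $W$ is a single orbit and that the two outer edges of each subdivided path are equivalent; so the new edges form one ``outer'' orbit and one ``middle'' orbit $\{a_i b_i\}$, distinct because $W$ is preserved --- giving $o_v(B(G,\mathcal{E})) = o_v(G) + 1$ and $o_e(B(G,\mathcal{E})) = (o_e(G) - 1) + 2 = o_e(G) + 1$. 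In case (ii), the inserted vertex adjacent to a tail and the inserted vertex adjacent to a head lie in different $\Aut(G)$-orbits, yielding two new vertex orbits, and the new edges split as two ``outer'' orbits (one per arc type) together with one ``middle'' orbit --- giving $o_v(B(G,\mathcal{E})) = o_v(G) + 2$ and $o_e(B(G,\mathcal{E})) = (o_e(G) - 1) + 3 = o_e(G) + 2$.

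\textbf{The main obstacle.} No individual step involves a hard computation; the point that needs care is the logical bridge from the abstract isomorphism $\Aut(G) \cong \Aut(B(G,\mathcal{E}))$ to the assertion that every automorphism of $B(G,\mathcal{E})$ is an extension $\hat\alpha$. Everything in the orbit bookkeeping --- in particular, that inserted vertices and edges never land in the same orbit as vertices and edges of $G$, and the clean split into the ``outer/middle'' orbits --- depends on this, and it is the finiteness of the automorphism groups that supplies it, so it must be stated explicitly rather than taken for granted.
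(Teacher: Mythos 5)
Your argument follows essentially the same route as the paper's proof: cite (or re-derive) the Sciriha--Gutman fact that double subdivision of a bridge preserves nut-ness, extend each $\alpha \in \Aut(G)$ to $\widehat{\alpha} \in \Aut(B(G,\mathcal{E}))$, use the isomorphism hypothesis (via finiteness) to conclude every automorphism of $B(G,\mathcal{E})$ is such an extension, and then count the new vertex and edge orbits according to whether an edge-flipping $\varphi$ exists. Your write-up is correct and in fact spells out two points the paper leaves implicit --- that the set of inserted vertices is preserved setwise (so new orbits cannot merge with old ones), and that the flip condition is conjugation-invariant across the orbit $\mathcal{E}$.
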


\begin{proof}
It is clear that $B(G, \mathcal{E})$ is a nut graph \cite{ScirihaGutman-NutExt}. Every element $\alpha \in \Aut(G)$ can be extended in a 
natural way to an element $\widehat{\alpha} \in \Aut(B(G, \mathcal{E}))$. More precisely, since the arc $(u, v)$ was subdivided, so was
its image $(u^\alpha, v^\alpha)$. Let the new vertices on $(u, v)$ be labeled $w_1$ and $w_2$, where $w_1$ is adjacent
to $u$. And let the new vertices on $(u^\alpha, v^\alpha)$ be labeled $w'_1$ and $w'_2$ where $w'_1$ is
adjacent to $u^\alpha$. Then $w_1^{\widehat{\alpha}} = w'_1$ and $w_2^{\widehat{\alpha}} = w'_2$.
This immediately implies that $\Aut(G) \leq \Aut(B(G, \mathcal{E}))$.

Note that $\Aut(B(G, \mathcal{E}))$ may include additional automorphisms that were not induced by $\Aut(G)$. These may
cause merging of vertex orbits and merging of edge orbits. If $\Aut(G) \cong \Aut(B(G, \mathcal{E}))$, then we know
that there are no such additional automorphisms. Note that graph $B(G, \mathcal{E})$ has at most two new vertex orbits, namely,
the orbit of $w_1$ and the orbit of $w_2$. The edge $uv$ was substituted by the three edges $uw_1, w_1w_2$ and $w_2v$.
If there exists an element $\varphi \in \Aut(G)$ such that $u^\varphi = v$ and $v^\varphi = u$, then using its extension 
$\widetilde{\varphi}$ we get $w_1^{\widehat{\varphi}} = w_2$. This means that vertices 
$w_1$ and $w_2$ are in the same
vertex orbit. Similarly, edges $uw_1$ and $w_2v$ are in the same edge orbit. The claim follows.
\end{proof}

Note that since $\Aut(G) \leq \Aut(B(G, \mathcal{E}))$, the condition $|\!\Aut(G)| = |\!\Aut(B(G, \mathcal{E}))|$ automatically
implies $\Aut(G) \cong \Aut(B(G, \mathcal{E}))$.

The proof of the following proposition is analogous to that of Proposition~\ref{prop:bridgeStuff} and is skipped here.

\begin{proposition}
\label{prop:edgeStuff}
Let $G$ be a nut graph and let $e = uv \in E(G)$ be an edge in $G$. Let $\mathcal{E}$ be the orbit of the edge $e$ under $\Aut(G)$.
The graph obtained from $G$ by applying the subdivision construction on every edge from $\mathcal{E}$, denoted  $S(G, \mathcal{E})$, 
is a nut graph and $\Aut(G) \leq \Aut(S(G, \mathcal{E}))$. 

If, in addition,  $\Aut(G) \cong \Aut(S(G, \mathcal{E}))$ then the following statements hold.
\begin{enumerate}[label=(\roman*)]
\item
If there exists an element $\varphi \in \Aut(G)$ such that $u^\varphi = v$ and $v^\varphi = u$, then
$o_v(S(G, \mathcal{E})) = o_v(G) + 2$ and $o_e(S(G, \mathcal{E})) = o_e(G) + 2$.
\item
If there is no element $\varphi \in \Aut(G)$ such that $u^\varphi = v$ and $v^\varphi = u$, then
$o_v(S(G, \mathcal{E})) = o_v(G) + 4$ and $o_e(S(G, \mathcal{E})) = o_e(G) + 4$.
\end{enumerate}
\end{proposition}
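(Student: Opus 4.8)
The plan is to mirror the proof of Proposition~\ref{prop:bridgeStuff}, the only genuine change being that the subdivided edge is replaced by a path on six vertices rather than on four. Fix $e = uv$ and write the path inserted in place of $e$ as $u\,w_1\,w_2\,w_3\,w_4\,v$, with $w_1$ adjacent to $u$; for a general edge $f = xy \in \mathcal{E}$ the four inserted vertices are labelled analogously. First I would record that $S(G,\mathcal{E})$ is a nut graph: this is the (four-fold) subdivision construction of \cite{ScirihaGutman-NutExt} applied once to each of the pairwise distinct edges of $\mathcal{E}$, and the individual subdivisions commute, so the order is immaterial. Concretely, if $\x \in \ker \mathbf{A}(G)$ with $\x(u) = a$ and $\x(v) = b$, the local conditions along the path force the unique extension $\x(w_1) = b$, $\x(w_2) = -a$, $\x(w_3) = -b$, $\x(w_4) = a$, which is nowhere zero, and conversely every kernel vector of $S(G,\mathcal{E})$ restricts to a kernel vector of $G$; hence $\eta$ and fullness are preserved. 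Next, exactly as in Proposition~\ref{prop:bridgeStuff}, each $\alpha \in \Aut(G)$ extends to $\widehat\alpha \in \Aut(S(G,\mathcal{E}))$ by mapping the path inserted in $f$ onto the path inserted in $f^\alpha$, matching $\alpha$ on the two endvertices of $f$ (well defined, since the inserted path has a reflective symmetry), and $\alpha \mapsto \widehat\alpha$ is an injective homomorphism, so $\Aut(G) \leq \Aut(S(G,\mathcal{E}))$.

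For the orbit counts, assume in addition $\Aut(G) \cong \Aut(S(G,\mathcal{E}))$, so that $\Aut(S(G,\mathcal{E})) = \{\widehat\alpha : \alpha \in \Aut(G)\}$. Every such automorphism restricts to $\alpha$ on $V(G)$ and preserves the set of inserted vertices, so no original vertex orbit can merge with another original orbit or with a new one, and among the edges the surviving original edge orbits neither merge nor merge with new ones; the single edge orbit $\mathcal{E}$ disappears, being replaced by the orbits among the inserted edges $uw_1$, $w_1w_2$, $w_2w_3$, $w_3w_4$, $w_4v$ of $e$ and their counterparts over the remaining edges of $\mathcal{E}$. The vertices $w_1, w_4$ (``outer'': one original neighbour) are never identified with $w_2, w_3$ (``inner'': both neighbours inserted), and likewise the inserted edges split into three stable families according to the types of their endvertices (original--outer, outer--inner, inner--inner). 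What remains is to decide, within each family, whether $w_1$ is identified with $w_4$, whether $w_2$ is identified with $w_3$, whether $uw_1$ is identified with $w_4v$, and whether $w_1w_2$ is identified with $w_3w_4$ — and this is governed entirely by whether $\Aut(G)$ acts transitively on the arcs lying over $\mathcal{E}$.

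The proof then splits into the stated dichotomy. If some $\varphi \in \Aut(G)$ reverses $e$, then conjugating $\varphi$ by automorphisms realising the transitivity of $\Aut(G)$ on $\mathcal{E}$ produces an arc-reversing automorphism for every edge of $\mathcal{E}$, so $\Aut(G)$ is arc-transitive over $\mathcal{E}$; hence the outer inserted vertices form a single orbit and the inner ones another, giving $o_v(S(G,\mathcal{E})) = o_v(G) + 2$, while the five inserted edge types collapse to the three orbits $\{uw_1\} \cup \{w_4v\}$, $\{w_1w_2\} \cup \{w_3w_4\}$, $\{w_2w_3\}$ (and their translates), giving $o_e(S(G,\mathcal{E})) = o_e(G) - 1 + 3 = o_e(G) + 2$. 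If no such $\varphi$ exists, the arc orbit of $(u,v)$ meets each edge of $\mathcal{E}$ exactly once, so the labels $w_1, w_2, w_3, w_4$ can be chosen coherently across all of $\mathcal{E}$; then each family $\{w_i^f : f \in \mathcal{E}\}$, $i \in \{1,2,3,4\}$, is a single orbit and the five inserted edge types remain distinct, giving $o_v(S(G,\mathcal{E})) = o_v(G) + 4$ and $o_e(S(G,\mathcal{E})) = o_e(G) - 1 + 5 = o_e(G) + 4$. As in Proposition~\ref{prop:bridgeStuff}, the one delicate point is making the arc/orientation bookkeeping precise enough to be certain that the new orbits are neither spuriously merged nor spuriously split; this is exactly where the hypothesis $\Aut(G) \cong \Aut(S(G,\mathcal{E}))$ (equivalently $|\!\Aut(G)| = |\!\Aut(S(G,\mathcal{E}))|$) is used.
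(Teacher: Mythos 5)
Your proposal is correct and follows exactly the route the paper intends: the paper explicitly skips this proof, stating it is analogous to that of Proposition~\ref{prop:bridgeStuff}, and your argument is precisely that analogue, with the correct kernel-vector extension $(b,-a,-b,a)$ along the inserted path, the extension of automorphisms, and the case analysis on the existence of an edge-reversing $\varphi$ yielding the $+2/+2$ versus $+4/+4$ counts. The arc-orbit bookkeeping you flag as the delicate point is handled correctly, so nothing further is needed.
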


\begin{definition}
\label{def:fowler}
Let $G$ be a nut graph and let $v \in V(G)$ be a vertex of degree $d$ in $G$. Let $N(v) = \{u_1, \ldots, u_d\}$.
The graph $F(G, v)$, is obtained from $G$ in the following way: (a) edges incident to $v$ are deleted;
(b) let $w_1, \ldots, w_d$ and $x_1, \ldots, x_d$ denote $2d$ newly added vertices; (c) new edges
are added such that $x_i \sim u_i$ for $i = 1, \ldots, d$; and $x_i \sim w_j$ for $i \neq j$, $1 \leq i, j \leq d$; and $w_i \sim v$ for all $i = 1, \ldots, d$. The construction $F(G, v)$ has been called `the Fowler construction' 
in the nut-graph literature \cite{GPS}.
\end{definition}

Figure~\ref{fig:fowler_constr} illustrates the definition. Note that $u_1, \ldots, u_d$ are at distance $3$ from $v$ in $F(G, v)$. Moreover, 
degrees of all newly added vertices are $d$.

\begin{proposition}
\label{prop:technicalStuff}
Let $G$ be a nut graph and let $v \in V(G)$ be a vertex in $G$. Let $\mathcal{V}$ be the orbit of the vertex $v$ under $\Aut(G)$.
The graph obtained from $G$ by applying the Fowler construction on every vertex from $\mathcal{V}$, denoted  $F(G, \mathcal{V})$, 
is a nut graph and $\Aut(G) \leq \Aut(F(G, \mathcal{V}))$.

Suppose that, in addition, $\Aut(G) \cong \Aut(F(G, \mathcal{V}))$. Let the vertices in the neighbourhood of $v$ in $G$ and in the first, second and third neighbourhood of $v$ in $F(G, \mathcal{V})$ be labeled as in Definition~\ref{def:fowler}.   The stabiliser $\Aut(G)_v$ fixes $N(v)$ set-wise in the
graph $G$ and partitions $N(v)$ into $t$ orbits. Let $\mathcal{S} = \{(w_i, x_j) \mid i \neq j; 1 \leq i, j \leq d \}$.  $\Aut(F(G, \mathcal{V}))_v$ partitions $\mathcal{S}$ into $\tau$ orbits. Then $o_v(F(G, \mathcal{V})) = o_v(G) + 2t$ and $o_e(F(G, \mathcal{V})) = o_e(G) + t + \tau$.
\end{proposition}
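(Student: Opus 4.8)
The plan is to reuse the scheme of Proposition~\ref{prop:bridgeStuff}: build an explicit injection $\Aut(G)\hookrightarrow\Aut(F(G,\mathcal{V}))$, invoke the isomorphism hypothesis to conclude that this injection is onto, and then count vertex and edge orbits class by class. That $F(G,\mathcal{V})$ is a nut graph is the orbit-wise Fowler construction of~\cite{GPS}; if a self-contained argument is wanted, one keeps the kernel eigenvector $\x$ of $G$ on the unchanged vertices (after the rescaling of the $G$-value $\x(v)\mapsto(1-d)\x(v)$ forced by the local condition at $v$), puts the $G$-value $\x(v)$ on each $x^{(v)}_i$ and the $G$-value $\x(u^{(v)}_i)$ on each $w^{(v)}_i$, checks that all local conditions hold, and uses~\cite{GPS} for nullity one. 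For the embedding, fix once and for all a labelling $N(v)=\{u^{(v)}_1,\dots,u^{(v)}_d\}$ together with gadget vertices $w^{(v)}_i,x^{(v)}_i$ for every $v\in\mathcal{V}$. An automorphism $\alpha\in\Aut(G)$ permutes $\mathcal{V}$ and, for each $v$, induces $\pi^v_\alpha\in S_d$ via $\alpha(u^{(v)}_i)=u^{(\alpha v)}_{\pi^v_\alpha(i)}$; define $\widehat\alpha$ to equal $\alpha$ on $V(G)$ and to send $w^{(v)}_i\mapsto w^{(\alpha v)}_{\pi^v_\alpha(i)}$ and $x^{(v)}_i\mapsto x^{(\alpha v)}_{\pi^v_\alpha(i)}$. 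Checking the four gadget edge types shows $\widehat\alpha\in\Aut(F(G,\mathcal{V}))$, the cocycle identity $\pi^v_{\alpha\beta}=\pi^{\beta v}_\alpha\circ\pi^v_\beta$ gives $\widehat{\alpha\beta}=\widehat\alpha\,\widehat\beta$, and $\widehat\alpha|_{V(G)}=\alpha$ makes $\alpha\mapsto\widehat\alpha$ injective; hence $\Aut(G)\le\Aut(F(G,\mathcal{V}))$.

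Since $\Aut(G)$ is a subgroup of the finite group $\Aut(F(G,\mathcal{V}))$, an abstract isomorphism $\Aut(G)\cong\Aut(F(G,\mathcal{V}))$ forces equal orders, so $\Aut(F(G,\mathcal{V}))=\{\widehat\alpha:\alpha\in\Aut(G)\}$ and $\Aut(F(G,\mathcal{V}))_v=\{\widehat\alpha:\alpha\in\Aut(G)_v\}$. The sets $V(G)$, $W:=\{w^{(v)}_i\}$ and $X:=\{x^{(v)}_i\}$ are each $\widehat\alpha$-invariant, so contribute disjoint families of vertex orbits. On $V(G)$ the action is that of $\Aut(G)$, giving $o_v(G)$ orbits. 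For $W$: transitivity of $\Aut(G)$ on $\mathcal{V}$ reduces the orbit of $w^{(v)}_i$ to the orbit of the index $i$ under $\{\pi^v_\alpha:\alpha\in\Aut(G)_v\}$, which is precisely the $\Aut(G)_v$-orbit of $u^{(v)}_i$ in $N(v)$; hence $W$ breaks into $t$ orbits, and the identical argument gives $t$ orbits on $X$. Summing, $o_v(F(G,\mathcal{V}))=o_v(G)+2t$.

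For edges, partition $E(F(G,\mathcal{V}))$ into: (A) edges of $G$ avoiding $\mathcal{V}$, which survive unchanged; (B) the re-attachment edges incident to an $x$-vertex, one for each edge of $G$ meeting $\mathcal{V}$ (joining the $x$-vertices of the two incident gadgets, or an $x$-vertex to the unchanged second endpoint); (C) the spoke edges $v\,w^{(v)}_i$ with $v\in\mathcal{V}$; (D) the gadget edges $w^{(v)}_i x^{(v)}_j$ with $i\ne j$. The endpoint types distinguish the four classes, so they contribute disjoint families of edge orbits. Class (A) realises the $\Aut(G)$-orbits of edges of $G$ avoiding $\mathcal{V}$, and class (B) is in $\Aut(G)$-equivariant bijection with the edges of $G$ meeting $\mathcal{V}$; since every edge orbit of $G$ lies wholly among the edges avoiding $\mathcal{V}$ or wholly among those meeting it, classes (A) and (B) together contribute exactly $o_e(G)$ orbits. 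Class (C) edges are in equivariant bijection with $W$, hence give $t$ orbits. Class (D) reduces, by transitivity on $\mathcal{V}$, to the action of $\Aut(F(G,\mathcal{V}))_v$ on $\mathcal{S}=\{(w_i,x_j):i\ne j\}$, which has $\tau$ orbits by hypothesis. Adding, $o_e(F(G,\mathcal{V}))=o_e(G)+t+\tau$.

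The delicate point is the edge bookkeeping in the last step: one must see that the re-attachment edges of class (B) correspond orbit-for-orbit to \emph{edges} of $G$ meeting $\mathcal{V}$ (not to darts), while the spokes of class (C) are instead governed by \emph{neighbours} of $v$, so that the two contributions combine to the clean value $o_e(G)+t$ rather than double-counting, and that none of the four edge classes can share an orbit. A secondary check is the case when $\mathcal{V}$ is not independent: an edge of $G$ with both ends in $\mathcal{V}$ joins two gadgets to each other, and one must confirm that it still produces a single class-(B) edge and that the equivariant bijection in step (B) is unaffected. Everything else is routine verification along the lines already used for Propositions~\ref{prop:bridgeStuff} and~\ref{prop:edgeStuff}.
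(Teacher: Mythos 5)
Your proposal is correct and follows exactly the route the paper intends: the paper omits the proof, stating only that it ``uses the same approach as that of Proposition~\ref{prop:bridgeStuff}'', and your argument is precisely that scheme --- the canonical lift $\alpha\mapsto\widehat\alpha$, the observation that the isomorphism hypothesis forces $\Aut(F(G,\mathcal{V}))=\{\widehat\alpha\}$, and the orbit count via the invariant decompositions $V(G)\sqcup W\sqcup X$ and the four endpoint-type edge classes. The details you supply (the eigenvector values $(1-d)\x(v)$, $\x(u_i)$, $\x(v)$ on $v$, $w_i$, $x_i$; the reduction of the $W$-, $X$- and gadget-edge orbits to the stabiliser $\Aut(G)_v$; and the treatment of edges with both ends in $\mathcal{V}$) all check out.
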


Note that we define the action of $\Aut(F(G, \mathcal{V}))_v$ on pairs $(w_i, x_j)$ by taking $(w_i, x_j)^\alpha = (w_i^\alpha, x_j^\alpha)$,
where $\alpha \in \Aut(F(G, \mathcal{V}))_v$. The proof of the above proposition uses the same approach as that of
 Proposition~\ref{prop:bridgeStuff} and is skipped here.

\begin{figure}[!htbp]
\centering
\subcaptionbox{$G$\label{Fig-FowlerExta}}[.5\linewidth]{
\centering
\begin{tikzpicture}
\definecolor{mygreen}{RGB}{19, 56, 190}
\definecolor{sapphire}{RGB}{82, 178, 191}
\tikzstyle{vertex}=[draw,circle,font=\scriptsize,minimum size=7pt,inner sep=1pt,fill=mygreen]
\tikzstyle{edge}=[draw,thick]
\coordinate (u1) at (-2, -1);
\coordinate (u2) at (-0.7, -1);
\coordinate (ud) at (2, -1);
\path[edge,fill=sapphire!60!white] (u1) .. controls ($ (u1) + (-60:0.5) $) and ($ (u2) + (-120:0.5) $) .. (u2) .. controls ($ (u2) + (-60:0.7) $) and ($ (ud) + (-120:0.7) $)
.. (ud) .. controls ($ (ud) + (-60:3) $) and ($ (u1) + (-120:3) $) .. (u1);
\node[vertex,fill=mygreen,label={[yshift=0pt,xshift=0pt]90:$v$}] (v) at (0, 0) {};
\node[vertex,fill=mygreen,label={[xshift=2pt,yshift=0pt]180:$u_1$}] (ux1) at (u1) {};
\node[vertex,fill=mygreen,label={[xshift=2pt,yshift=0pt]180:$u_2$}] (ux2) at (u2) {};
\node[vertex,fill=mygreen,label={[xshift=-2pt,yshift=0pt]0:$u_d$}] (uxd) at (ud) {};
\node at (0.5, -1) {$\ldots$};
\path[edge] (v) -- (ux1) node[midway,xshift=-23pt,yshift=0,color=black] {};
\path[edge] (v) -- (ux2) node[midway,xshift=16pt,yshift=0,color=black] {};
\path[edge] (v) -- (uxd) node[midway,xshift=20pt,yshift=0,color=black] {};
\end{tikzpicture}
}%
\subcaptionbox{$F(G, v)$\label{Fig-FowlerExtb}}[.5\linewidth]{
\centering
\begin{tikzpicture}
\definecolor{mygreen}{RGB}{19, 56, 190}
\definecolor{sapphire}{RGB}{82, 178, 191}
\tikzstyle{vertex}=[draw,circle,font=\scriptsize,minimum size=7pt,inner sep=1pt,fill=mygreen]
\tikzstyle{edge}=[draw,thick]
\coordinate (u1) at (-2, -4);
\coordinate (u2) at (-0.7, -4);
\coordinate (ud) at (2, -4);
\path[edge,fill=sapphire!60!white] (u1) .. controls ($ (u1) + (-60:0.5) $) and ($ (u2) + (-120:0.5) $) .. (u2) .. controls ($ (u2) + (-60:0.7) $) and ($ (ud) + (-120:0.7) $)
.. (ud) .. controls ($ (ud) + (-60:3) $) and ($ (u1) + (-120:3) $) .. (u1);
\node[vertex,fill=mygreen,label={[yshift=2pt,xshift=0pt]90:$v$}] (v) at (0, -1) {};
\node[vertex,fill=mygreen,label={[xshift=2pt,yshift=0pt]180:$w_1$}] (q1) at (-2, -2) {};
\node[vertex,fill=mygreen,label={[xshift=-2pt,yshift=0pt]0:$w_2$}] (q2) at (-0.7, -2) {};
\node[vertex,fill=mygreen,label={[xshift=-2pt,yshift=0pt]0:$w_d$}] (qd) at (2, -2) {};
\node at (1, -2) {$\ldots$};
\node[vertex,fill=mygreen,label={[xshift=2pt,yshift=-2pt]180:$x_1$}] (p1) at (-2, -3) {};
\node[vertex,fill=mygreen,label={[xshift=2pt,yshift=-2pt]0:$x_2$}] (p2) at (-0.7, -3) {};
\node[vertex,fill=mygreen,label={[xshift=0pt,yshift=-2pt]0:$x_d$}] (pd) at (2, -3) {};
\node at (1, -3) {$\ldots$};
\node[vertex,fill=mygreen,label={[xshift=2pt,yshift=0pt]180:$u_1$}] (ux1) at (-2, -4) {};
\node[vertex,fill=mygreen,label={[xshift=2pt,yshift=0pt]0:$u_2$}] (ux2) at (-0.7, -4) {};
\node[vertex,fill=mygreen,label={[xshift=-2pt,yshift=0pt]0:$u_d$}] (uxd) at (2, -4) {};
\node at (1, -4) {$\ldots$};
\path[edge] (v) -- (q1);
\path[edge] (v) -- (q2);
\path[edge] (v) -- (qd);
\path[edge] (p1) -- (q2); \path[edge] (p1) -- (qd);
\path[edge] (p2) -- (q1); \path[edge] (p2) -- (qd);
\path[edge] (pd) -- (q1); \path[edge] (pd) -- (q2);
\path[edge] (p1) -- (ux1) node[midway,xshift=-15pt,yshift=0,color=black] {}; 
\path[edge] (p2) -- (ux2) node[midway,xshift=16pt,yshift=0,color=black] {}; 
\path[edge] (pd) -- (uxd) node[midway,xshift=16pt,yshift=0,color=black] {};
\end{tikzpicture}
}
\caption{A construction for expansion of a nut graph $G$ about vertex $v$ of degree $d$, to give $F(G, v)$.
Panel (a) shows the neighbourhood of vertex $v$ in $G$. 
Panel (b) shows additional vertices and edges in $F(G, v)$.}
\label{fig:fowler_constr}
\end{figure}
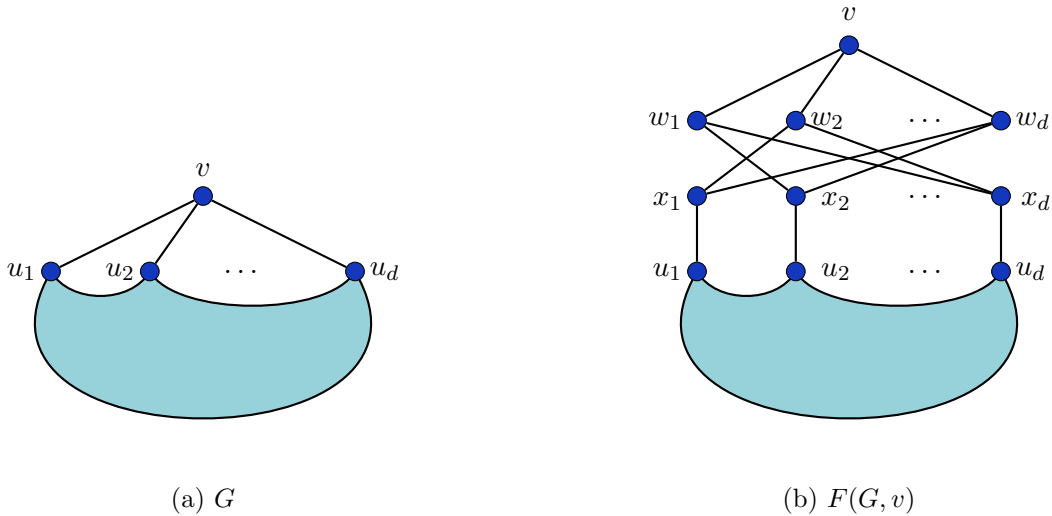

We are interested in the growth of the number of edge orbits under the construction.
Let us define $\Phi(G, v) = o_e(F(G, \mathcal{V})) - o_e(G)$, where $\mathcal{V}$ is the orbit of the vertex $v$.
Proposition~\ref{prop:technicalStuff} has the following corollary.

\begin{corollary}
\label{cor:growthBounds}
Let $G$ be a nut graph and let $v \in V(G)$ be a vertex in $G$. Let $\mathcal{V}$ be the orbit of the vertex $v$ under $\Aut(G)$.
Let $F(G, \mathcal{V})$ be as in Proposition~\ref{prop:technicalStuff} and also let $\Aut(G) \cong \Aut(F(G, \mathcal{V}))$. Then
\begin{equation}
\label{eq:orbitExcess}
4 \leq \Phi(G, v) \leq d^2.
\end{equation}
If $\deg(v) \geq 3$, then $\Phi(G, v) \geq 5$.
\end{corollary}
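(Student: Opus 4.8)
The plan is to start from the exact accounting supplied by Proposition~\ref{prop:technicalStuff}: under the standing hypothesis $\Aut(G) \cong \Aut(F(G,\mathcal{V}))$ we have $o_e(F(G,\mathcal{V})) - o_e(G) = t + \tau$, so $\Phi(G,v) = t + \tau$, where $t$ is the number of orbits of the stabiliser $H := \Aut(G)_v$ on $N(v)$ (equivalently, the number of orbits of the permutation action of $H$ on the index set $\{1,\dots,d\}$), and $\tau$ is the number of orbits of $H$, acting diagonally through that same permutation action, on $\mathcal{S} = \{(w_i,x_j) : i \neq j,\ 1 \le i,j \le d\}$. The upper bound is then immediate: $t \le |N(v)| = d$ and $\tau \le |\mathcal{S}| = d(d-1)$, whence $\Phi(G,v) \le d + d(d-1) = d^2$.

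For the lower bound the key observation is that $t \ge 2$ for every vertex $v$ of every nut graph. Let $\x$ be the kernel eigenvector. By Lemma~\ref{lem:7gen}(a) we have $\x^\alpha = \pm\x$ for every $\alpha \in \Aut(G)$; if $v^\alpha = v$, comparing the $v$-th coordinates gives $x_v = \pm x_v$, and since $x_v \neq 0$ this forces $\x^\alpha = \x$. Hence $\x$ is constant on each $H$-orbit of $N(v)$. If $N(v)$ were a single $H$-orbit, $\x$ would take a constant value $c$ on all of $N(v)$, and the local condition $\sum_{u\in N(v)} x_u = 0$ would give $dc = 0$, i.e.\ $c = 0$, contradicting fullness of $\x$. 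Therefore $t \ge 2$.

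It then remains to combine $t \ge 2$ with a short counting argument for $\tau$. Write the $H$-orbits on $\{1,\dots,d\}$ as $O_1,\dots,O_t$. Two ordered pairs $(i,j),(i',j') \in \mathcal{S}$ lying in the same $H$-orbit must have $i,i'$ in a common $O_p$ and $j,j'$ in a common $O_q$; consequently, distinct ordered pairs $(p,q)$ of orbit-indices with $p\ne q$ (for each of which a valid element $(i,j)$ of $\mathcal{S}$ exists, since then $i\ne j$ automatically) contribute pairwise distinct orbits, so $\tau \ge t(t-1)$. When $t \ge 3$ this already gives $\tau \ge 6$, hence $\Phi(G,v) \ge t + 6 \ge 9$; since $d \ge t$, this case only occurs for $d \ge 3$, where $\Phi(G,v) \ge 9 \ge 5$. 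When $t = 2$, the ``cross'' directions $O_1\times O_2$ and $O_2\times O_1$ give $\tau \ge 2$ and so $\Phi(G,v) \ge 4$; if moreover $d \ge 3$ then $|O_1| + |O_2| = d$ forces one orbit to have at least two elements, yielding an additional ``within-orbit'' pair and $\tau \ge 3$, hence $\Phi(G,v) \ge 5$. This exhausts all cases and also pins down $d=2$ exactly: there $t=2$ is forced, both $O_i$ are singletons, $H$ acts trivially on $\mathcal{S}$, and $\Phi(G,v) = 2+2 = 4 = d^2$.

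I expect the conceptual crux to be the step $t \ge 2$: it is the only place where an idea (rather than bookkeeping) is needed, and it is precisely what fails for general non-nut host graphs — it is the nut-graph property, via Lemma~\ref{lem:7gen}(a), that prevents a vertex stabiliser from being transitive on the neighbourhood. Everything downstream of that inequality is elementary orbit counting, and the two refinements ($\Phi \ge 4$ always, $\Phi \ge 5$ for $d \ge 3$) drop out of distinguishing the subcases $t=2$ with $d=2$ versus $d \ge 3$, and $t \ge 3$.
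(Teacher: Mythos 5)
Your proof is correct and follows essentially the same route as the paper: the upper bound from $t\le d$, $\tau\le d(d-1)$, and the lower bounds from the key fact that a vertex stabiliser cannot be sign-reversing (so the kernel eigenvector is constant on stabiliser orbits of $N(v)$, and the local condition then forces $t\ge 2$), followed by counting orbits on $\mathcal{S}$. Your $\tau\ge t(t-1)$ block-counting is just a slightly more systematic version of the paper's explicit exhibition of four (respectively five) pairwise inequivalent new edges, so no substantive difference.
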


\begin{proof}
To get the upper bound, assume that each vertex of $N(v)$ is in its own orbit and therefore $t = d$. 
Similarly, each element of $\mathcal{S}$ is in its own orbit and therefore $\tau = d^2 - d$.

For parameters $t$ and $\tau$ from Proposition~\ref{prop:technicalStuff}, it holds that $t \geq 1$ and $\tau \geq 1$. Therefore,
$2 \leq o_e(F(G, \mathcal{V})) - o_e(G)$. By Lemma~\ref{lem:7gen}, $\alpha \in \Aut(G)_v$ cannot be sign-reversing. There must
be at least one vertex in $N(v)$ with a positive entry in the kernel eigenvector and at least one with a negative entry. Therefore,
the group $\Aut(G)_v$ partitions $N(v)$ in at least $2$ orbits, say $\mathcal{U}$ and $\mathcal{U}'$. Vertices $x_1, \ldots, x_d$ cannot be in the same orbit under $\Aut(G)_v$ as any
other vertex of $F(G, v)$; see Figure~\ref{fig:fowler_constr}. The same is true for $w_1, \ldots, w_d$. Since $\Aut(G) \cong \Aut(F(G, \mathcal{V}))$, $\{x_1, \ldots, x_d\}$ and $\{w_1, \ldots, w_d\}$ are partitioned into orbits under $\Aut(G)_v$ in the same way as $\{u_1, \ldots, u_d\}$ (i.e.,
$x_i$ and $x_j$ belong to the same orbit if and only if $u_i$ and $u_j$ belong to the same orbit). Orbits $\mathcal{U}$ and $\mathcal{U}'$
induce orbits $\mathcal{X}$ and $\mathcal{X}'$ on $\{x_1, \ldots, x_d\}$ and orbits $\mathcal{W}$ and $\mathcal{W}'$ on $\{w_1, \ldots, w_d\}$. There exists at least one edge connecting $\mathcal{U}$ to $\mathcal{X}$, at least one edge connecting $\mathcal{U}'$ to $\mathcal{X}'$, at least one edge connecting $\mathcal{X}$ to $\mathcal{W}'$ and at least one edge connecting $\mathcal{X}'$ to $\mathcal{W}$. Each of these four edges must be in a distinct new edge orbit, hence $4 \leq o_e(F(G, \mathcal{V})) - o_e(G)$.

If $\deg(v) \geq 3$ then either $\mathcal{U}$ or $\mathcal{U}'$ contains at least $2$ vertices. Without loss of generality
assume that $|\mathcal{U}| \geq 2$. Then there exists at least one edge connecting $\mathcal{X}$ to $\mathcal{W}$. This edge cannot
share the orbit with any of the above four edges, hence $5 \leq o_e(F(G, \mathcal{V})) - o_e(G)$.
\end{proof}

The upper bound is best possible, because the equality in \eqref{eq:orbitExcess} can be attained if we take $G$ to be any asymmetric
nut graph (i.e.\ $|\Aut(G)| = 1$). This bound is attained even within the class of vertex transitive graphs, when we take $G$ to be a
GRR~\cite{WatkinsGRR1,WatkinsGRR2} nut graph, such as the one in Figure~\ref{fig:zero_sym_nut}. 

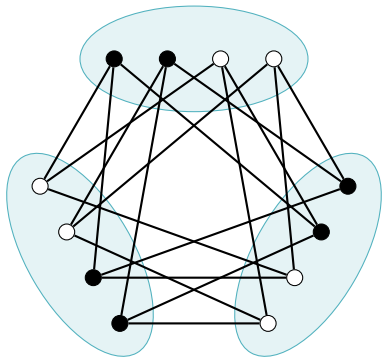
\begin{figure}[!htbp]
\centering
\begin{tikzpicture}
\definecolor{mygreen}{RGB}{19, 56, 190}
\definecolor{sapphire}{RGB}{82, 178, 191}
\draw[rotate=60,,color=sapphire,fill=sapphire!15!white] (0, 0) circle (1.5cm and 0.7cm);
\draw[rotate around={-60:(-3, 0)},color=sapphire,fill=sapphire!15!white] (-3, 0) circle (1.5cm and 0.7cm);
\draw[rotate around={0:(120:3)},color=sapphire,fill=sapphire!15!white] (120:3) circle (1.5cm and 0.7cm);
\tikzstyle{vertex}=[draw,circle,font=\scriptsize,minimum size=6pt,inner sep=1pt,fill=mygreen]
\tikzstyle{edge}=[draw,thick]
\foreach \x/\cc/\fc in {0/black/white,1/black/white,2/white/black,3/white/black} {
     \node[vertex,fill=\cc] (u\x) at ($ (120:3) + (0.7 * \x, 0) - (1.05, 0)  $) {};
     \node[vertex,fill=\fc] (v\x) at ($ (0:0) + ({0.7 * cos(60) * \x}, {0.7 * sin(60) * \x}) - (60:1.05)  $) {};
     \node[vertex,fill=\cc] (w\x) at ($ (-3,0) + ({0.7 * cos(120) * \x}, {0.7 * sin(120) * \x}) - (120:1.05)  $) {};
}
   \path[edge] (v0) -- (u2);
   \path[edge] (v1) -- (u3);
   \path[edge] (v2) -- (u0);
   \path[edge] (v2) -- (u2);
   \path[edge] (v3) -- (u1);
   \path[edge] (v3) -- (u3);
  \path[edge] (u3) -- (w2);
   \path[edge] (u2) -- (w3);
   \path[edge] (u1) -- (w0);
   \path[edge] (u1) -- (w2);
   \path[edge] (u0) -- (w1);
   \path[edge] (u0) -- (w3);
   \path[edge] (v3) -- (w1);
   \path[edge] (v2) -- (w0);
   \path[edge] (v1) -- (w3);
   \path[edge] (v1) -- (w1);
   \path[edge] (v0) -- (w2);
   \path[edge] (v0) -- (w0);
\end{tikzpicture}
\caption{The smallest GRR nut graph has order $12$ and degree $6$. The graph contains three cliques represented
by shaded regions; edges within cliques are not drawn.
Entries in the kernel eigenvector are all of equal magnitude and represented by circles colour-coded for sign.}
\label{fig:zero_sym_nut}
\end{figure}

The lower bound is more interesting. The restriction of $G$ to nut graphs in Corollary~\ref{cor:growthBounds} is significant since, for example, if we take a complete graph on $n \geq 4$ vertices then $o_e(F(K_n, \mathcal{V})) - o_e(K_n) = 2$. For vertices of degree $2$, Corollary~\ref{cor:growthBounds} implies $o_e(F(K_n, \mathcal{V})) - o_e(K_n) = 4$. Small graphs with
vertices of degree $d \in \{3, 4\}$  furnish examples where $o_e(F(K_n, \mathcal{V})) - o_e(K_n) = 5$; see Figure~\ref{fig:smallExamplesGrowth}.
\begin{figure}[!htbp]
\centering
\subcaptionbox{}{
\begin{tikzpicture}[scale=0.4]
\tikzstyle{vertex}=[draw,circle,font=\scriptsize,minimum size=6pt,inner sep=1pt,fill=black]
\tikzstyle{edge}=[draw,thick]
\node[vertex] (0) at (1, 4) {};
\node[vertex] (5) at (-5, 4) {};
\node[vertex] (7) at (-2, 1.5) {};
\node[vertex] (1) at (-3.5, 2.5) {};
\node[vertex] (3) at (-0.5, 2.5) {};
\node[vertex,fill=magenta] (2) at (-2, -0.5) {};
\node[vertex] (4) at (-5, -2.5) {};
\node[vertex] (6) at (1, -2.5) {};
 \path[edge]
(0) edge node {} (3) 
(0) edge node {} (5) 
(0) edge node {} (6) 
(1) edge node {} (4) 
(1) edge node {} (5) 
(1) edge node {} (7) 
(2) edge node {} (4) 
(2) edge node {} (6) 
(2) edge node {} (7) 
(3) edge node {} (6) 
(3) edge node {} (7) 
(4) edge node {} (5) 
(4) edge node {} (6) 
(4) edge node {} (7) 
(6) edge node {} (7) ;
\end{tikzpicture}
}
\qquad\qquad
\subcaptionbox{}{
\begin{tikzpicture}[scale=0.8]
\tikzstyle{vertex}=[draw,circle,font=\scriptsize,minimum size=6pt,inner sep=1pt,fill=black]
\tikzstyle{edge}=[draw,thick]
\node[vertex] (3) at (-2.2, 2.5) {};
\node[vertex] (0) at (-2.2, 3.5) {};
\node[vertex] (1) at (-2.2, 1.5) {};
\node[vertex] (6) at (0.4, 3.5) {};
\node[vertex] (4) at (-0.2, 2.5) {};
\node[vertex] (8) at (0.4, 1.5) {};
\node[vertex,fill=magenta] (2) at (2.7, 4) {};
\node[vertex,fill=magenta] (5) at (-4.8, 3.6) {};
\node[vertex] (7) at (-4.2, 4.7) {};
 \path[edge]
(0) edge node {} (4) 
(0) edge node {} (5) 
(0) edge node {} (6) 
(0) edge node {} (7) 
(1) edge node {} (4) 
(1) edge node {} (5) 
(1) edge node {} (7) 
(1) edge node {} (8) 
(2) edge node {} (4) 
(2) edge node {} (6) 
(2) edge node {} (7) 
(2) edge node {} (8) 
(3) edge node {} (5) 
(3) edge node {} (6) 
(3) edge node {} (7) 
(3) edge node {} (8) 
(4) edge node {} (6) 
(4) edge node {} (8) 
(5) edge node {} (7) 
(6) edge node {} (8);
\end{tikzpicture}
}
\caption{The smallest graphs where $\Phi(G, v) = 5$. 
 (a) For $d = 3$ the smallest example is unique and of order $8$. 
(b) For $d = 4$ one of the two smallest examples of order $9$ is shown. Vertices for which the bound is met are coloured magenta.}
\label{fig:smallExamplesGrowth}
\end{figure}
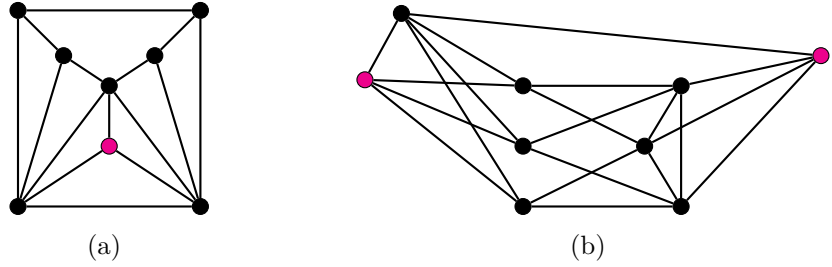
The search on nut graphs of orders $n \leq 12$ yields examples for $d \in \{5, 6, 7\}$ for which $\Phi(G, v) = 6$;
see \cite{GitHub}. 
Examples of graph with small $\Phi(G, v)$ can also be found in the class of regular graphs; see \cite{GitHub}. 
Moreover, examples with small $\Phi(G, v)$ can be found in the class of vertex transitive graphs. For example, $K_{3,3} \cart K_4$ is a sextic vertex transitive
nut graph with $\Phi(K_{3,3} \cart K_4, v) = 6$.

It is important to note that in Propositions~\ref{prop:bridgeStuff}, \ref{prop:edgeStuff} and \ref{prop:technicalStuff} the respective
construction was applied to all edges/vertices within a given edge/vertex orbit. This ensures that the graph obtained by
the construction inherits all the symmetries of the original graph $G$. 
The requirement that
 the order of the automorphism group of the graph does not increase upon applying the given construction, i.e.\ 
$\Aut(G) \cong \Aut(F(G, \mathcal{V}))$,  is also crucial to the propositions. Figure~\ref{fig:orbit_constr1} illustrates some of the complications
that can arise.

\begin{figure}[!htbp]
\centering
\subcaptionbox{$\Omega = (12, 13, 8)$\label{subfig:orbit_constr1a}}
{
\includegraphics[scale=0.6]{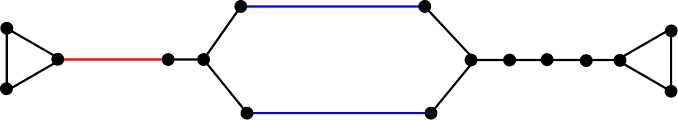}
}
\quad
\subcaptionbox{$\Omega = (16, 17, 8)$\label{subfig:orbit_constr1b}}
{
\includegraphics[scale=0.6]{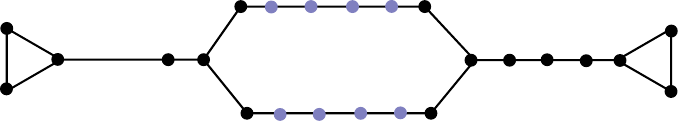}
}
\subcaptionbox{$\Omega = (7, 8, 16)$\label{subfig:orbit_constr1c}}
{
\includegraphics[scale=0.6]{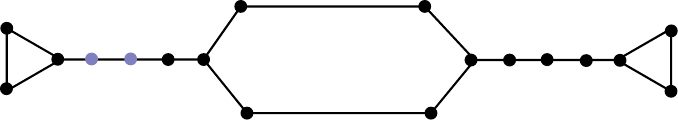}
}
\quad
\subcaptionbox{$\Omega = (18, 20, 4)$\label{subfig:orbit_constr1d}}
{
\includegraphics[scale=0.6]{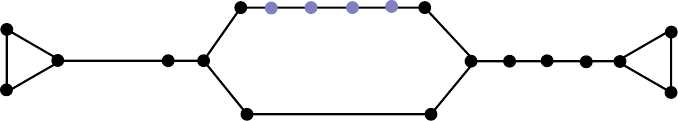}
}

\vspace{\baselineskip}
\subcaptionbox{$\Omega = (3, 4, 6)$\label{subfig:orbit_constr2itself}}
{
 \includegraphics[scale=0.6]{img/Fig1c.pdf}
\quad
}
\quad
\subcaptionbox{$\Omega = (7, 10, 2)$\label{subfig:orbit_constr2a}}
{
\quad
\includegraphics[scale=0.6]{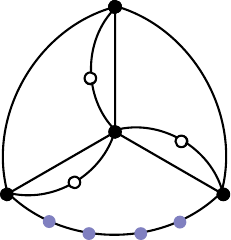}
\quad
}
\quad
\subcaptionbox{$\Omega = (9, 12, 2)$\label{subfig:orbit_constr2b}}
{
\quad
\includegraphics[scale=0.6]{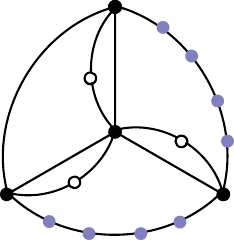}
\quad
}
\quad
\subcaptionbox{$\Omega = (5, 6, 6)$ \label{subfig:orbit_constr2c}}
{
\quad
\includegraphics[scale=0.6]{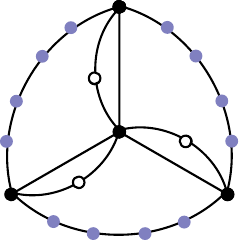}
}

\vspace{\baselineskip}
\subcaptionbox{$\Omega = (2, 4, 288)$ \label{subfig:orbit_constr3a}}
{
\includegraphics[scale=0.6]{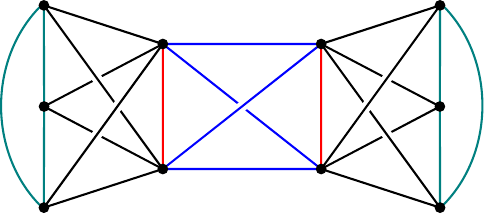}
}
\subcaptionbox{$\Omega = (12, 17, 16)$ \label{subfig:orbit_constr3c}}
{
\includegraphics[scale=0.6]{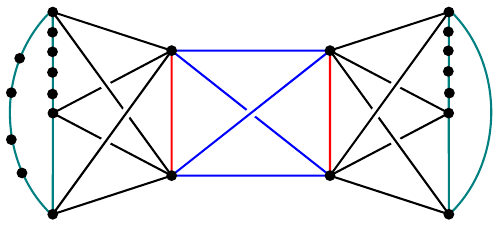}
}
\subcaptionbox{$\Omega = (7, 10, 32)$ \label{subfig:orbit_constr3b}}
{
\includegraphics[scale=0.6]{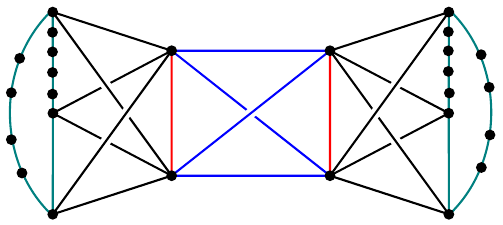}
}
\caption{Interplay of constructions, orbits and automorphism group. 
Parts (a) to (d): in the starting graph (a) two orbits are marked in red and blue; in (b)
subdivision on an entire orbit preserves the automorphism group; in (c) the bridge construction on an entire orbit leads to doubling in order of the automorphism
group; in (d) subdivision in a part orbit leads to halving. Parts (e) to (h): the Sciriha graph $S_3$ shown in (e) is progressively subdivided;
in (f) subdivision in a part orbit gives broken symmetry; in (g) subdivision in a part orbit of (f) preserves the order of the automorphism group; in (h) subdivision
of the final edge causes merging of orbits and restoration of the original symmetry. Parts (i) to (k): the highly symmetric graph (i) has four edge orbits; in (j) the symmetry is
considerably reduced by applying subdivision in a part of the green orbit of (i); the graph in (k) is obtained from (j) by applying subdivision on one of two equivalent edges
but the symmetry increases; subdivision of the two remaining green edges would restore the full symmetry of (i).
The signature $\Omega$ given for each graph $G$ denotes the triple $(o_v(G), o_e(G), |\mathrm{Aut}(G)|)$ as in Figure~\ref{fig:orbit_count}.
}
\label{fig:orbit_constr1}
\end{figure}

\begin{proposition}
\label{prop:multiConstrSym}
Let $G$ be a connected $(2t)$-regular graph, where $t \geq 1$. 
Then $o_v(\mathcal{M}_3(G)) = 2 o_v(G)$ and $o_e(\mathcal{M}_3(G)) = o_e(G) + 2o_v(G)$.
Moreover, $|\Aut(\mathcal{M}_3(G))| = (2^t t!)^{|V(G)|} |\Aut(G)|$.
\end{proposition}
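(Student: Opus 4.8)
The plan is to pin down $\Aut(\mathcal{M}_3(G))$ completely and then read off the two orbit counts as corollaries. Write $V(G) = \{v_1,\dots,v_m\}$ with $m = |V(G)|$, and for each $v_i$ and each $j \in \{1,\dots,t\}$ let $a_{i,j},b_{i,j}$ be the two new (``outer'') vertices of the $j$-th triangle fused at $v_i$, so the added edges are $v_ia_{i,j}$, $v_ib_{i,j}$ and $a_{i,j}b_{i,j}$. First I would record the key degree distinction: in $\mathcal{M}_3(G)$ each original vertex has degree $2t+2t = 4t$ while each outer vertex has degree $2$, and since $t \geq 1$ these differ. Hence every automorphism of $\mathcal{M}_3(G)$ preserves $V(G)$ setwise; since the only edges of $\mathcal{M}_3(G)$ with both endpoints in $V(G)$ are the edges of $G$, restriction gives a group homomorphism $\rho\colon \Aut(\mathcal{M}_3(G)) \to \Aut(G)$. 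This $\rho$ is surjective and even splits: $\sigma \in \Aut(G)$ extends to $\widehat\sigma$ via $\widehat\sigma(v_i) = v_{\sigma(i)}$, $\widehat\sigma(a_{i,j}) = a_{\sigma(i),j}$, $\widehat\sigma(b_{i,j}) = b_{\sigma(i),j}$.

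Next I would compute $\ker\rho$. If $\phi$ fixes every $v_i$, then it permutes the degree-$2$ neighbours of each $v_i$ among themselves --- i.e.\ it fixes each bouquet setwise --- and respects the triangle structure, so its restriction to the bouquet at $v_i$ is an automorphism of the windmill graph $W_t$ ($t$ triangles sharing a common vertex) that fixes the centre. Such automorphisms permute the $t$ triangles and may independently transpose the two outer vertices inside each triangle, forming $\mathbb{Z}_2 \wr S_t$ of order $2^t t!$; as the actions on distinct bouquets are independent, $\ker\rho \cong (\mathbb{Z}_2 \wr S_t)^m$. Combining with surjectivity, $|\Aut(\mathcal{M}_3(G))| = |\ker\rho|\,|\Aut(G)| = (2^t t!)^{|V(G)|}\,|\Aut(G)|$.

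For the orbit counts I would then argue as follows. Since $\rho$ is surjective, the orbits of $\Aut(\mathcal{M}_3(G))$ lying inside $V(G)$ are exactly the $o_v(G)$ vertex orbits of $G$. The kernel already makes all $2t$ outer vertices of a single bouquet into one orbit; two outer vertices over $v_i,v_{i'}$ lie in one orbit iff $v_i,v_{i'}$ do (use $\widehat\sigma$ in one direction, and in the other note that an outer vertex has a unique neighbour of degree $4t$, namely its $v_i$); and no outer vertex is equivalent to any $v_i$, by the degree argument. So the outer vertices supply exactly $o_v(G)$ further orbits, giving $o_v(\mathcal{M}_3(G)) = 2o_v(G)$. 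For edges, partition $E(\mathcal{M}_3(G))$ into original edges (both endpoints of degree $4t$), spoke edges $v_ia_{i,j},v_ib_{i,j}$ (degrees $4t,2$) and rim edges $a_{i,j}b_{i,j}$ (degrees $2,2$); since $4t \neq 2$ these three classes are automorphism-invariant. The original edges give the $o_e(G)$ edge orbits of $G$. Each of the spoke and rim classes behaves exactly like the set of outer vertices: the kernel fuses, within a bouquet, all spokes (resp.\ all rims) into one orbit, while the vertex $v_i$ that such an edge sits over --- recovered as the degree-$4t$ endpoint of a spoke, resp.\ as the unique common neighbour of the two endpoints of a rim --- is an $\Aut(G)$-invariant, so each class contributes $o_v(G)$ orbits. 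Hence $o_e(\mathcal{M}_3(G)) = o_e(G) + 2o_v(G)$.

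The main obstacle will be the identification $\ker\rho \cong (\mathbb{Z}_2\wr S_t)^m$: one must verify that an automorphism fixing all $v_i$ cannot move outer vertices between different bouquets (handled by the ``unique degree-$4t$ neighbour'' characterisation) and that its restriction to a single bouquet realises precisely the windmill's centre-fixing automorphism group --- for $t=1$ this collapses to $\mathbb{Z}_2$, still matching $2^1\cdot 1! = 2$, and for $t \geq 2$ the centre of $W_t$ is forced to be fixed anyway by its unique largest degree. Everything else is routine bookkeeping with the degree invariant.
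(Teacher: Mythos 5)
Your proof is correct and follows essentially the same route as the paper: extend each automorphism of $G$ to $\mathcal{M}_3(G)$, identify the pointwise stabiliser of $V(G)$ as the independent product of the bouquet symmetries of order $(2^t t!)^{|V(G)|}$, and multiply. You are in fact somewhat more careful than the published argument, since you justify via the degree invariant ($4t$ versus $2$) that $V(G)$ is setwise preserved and that the restriction map is a split surjection onto $\Aut(G)$ --- a point the paper's appeal to the Orbit--Stabiliser Lemma leaves implicit --- and you make the ``straightforward'' orbit counting explicit through the spoke/rim classification.
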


\begin{proof}
Every element $\alpha \in \Aut(G)$ can be extended in a natural way to an element $\widehat{\alpha} \in \mathcal{M}_3(G)$; the element $\widehat{\alpha}$
moves the vertices of the original graph in the same way as $\alpha$, and also moves the corresponding attached triangles. Therefore, $|\Aut(\mathcal{M}_3(G))| \geq|\Aut(G)|$.
Now, let us consider action of the stabiliser within $\Aut(\mathcal{M}_3(G))$ that fixes the subgraph $G$.
Consider the triangles attached to an arbitrary vertex $v \in V(G)$. Clearly, the stabiliser permutes the $t$ triangles; this contributes $t!$ to the order of the stabiliser. In addition, there exist
involutions that swap two degree-$2$ endvertices in any attached triangle; this contributes $2^t$ to the order of the stabiliser. Finally, all these operations can be done independently
at every vertex $v \in V(G)$. Therefore, the order of the stabiliser is $ (2^t t!)^{|V(G)|}$. Using the Orbit-Stabiliser Lemma~\cite[Lemma~2.2.2]{Godsil2001}, we obtain $|\Aut(\mathcal{M}_3(G))| = (2^t t!)^{|V(G)|} |\Aut(G)|$. Now that the full automorphism group of $\mathcal{M}_3(G))$ is known, counting the vertex- and edge-orbits is straightforward.
\end{proof}

\begin{example}
\label{examp:2}
Consider graphs $K_7$, $\Circ(12, \{1, 5\})$ and the hypercube $Q_6$. All these graphs are vertex and edge transitive.
Let us determine the order of the automorphism group of $\mathcal{M}_3(G)$ for $G$ from the above list.
It is easy to see that $|\Aut(\Circ(12, \{1, 5\}))| = 2^8 \cdot 3$, $|\Aut(K_7)| = 7!$ and $|\Aut(Q_6)| = 2^6 \cdot 6!$.
Graphs $K_7$ and $Q_6$ are $6$-regular, while $\Circ(12, \{1, 5\})$ is $4$-regular. By Proposition~\ref{prop:multiConstrSym},
\begin{align*}
|\Aut(\mathcal{M}_3(\Circ(12, \{1, 5\}))| & = (2^2 \cdot 2!)^{12} \cdot (2^8 \cdot 3) = 52776558133248, \\
|\Aut(\mathcal{M}_3(K_7))| & = (2^3 \cdot 3!)^7 \cdot 7! = 2958824445050880, \\
|\Aut(\mathcal{M}_3(Q_6))| & = (2^3 \cdot 3!)^{64} \cdot (2^6 \cdot 6!)  \approx 1.832 \cdot 10^{112}.
\end{align*}
\end{example}

Note that even though the automorphism group of $\mathcal{M}_3(G)$ might be absurdly large, the numbers of vertex and
edge orbits remain small, and in determining them we can ignore the extra symmetries.~$\diamond$

\vspace{0.5\baselineskip}
\noindent
Proposition~\ref{prop:multiConstrSym} has a natural generalisation.
\begin{proposition}
\label{prop:multiConstrSymGen}
Let $k \geq 3$ be an odd integer and let  $G$ be a connected $(2t)$-regular graph, where $t \geq 1$. 
If $k \equiv 1 \pmod{4}$ then the graph $G$ is further required to be bipartite.
Then $o_v(\mathcal{M}_k(G)) = \frac{k + 1}{2}o_v(G)$ and $o_e(\mathcal{M}_k(G)) = o_e(G) + \frac{k + 1}{2}o_v(G)$.
Moreover, $|\Aut(\mathcal{M}_k(G))| = (2^t t!)^{|V(G)|} |\Aut(G)|$.
\end{proposition}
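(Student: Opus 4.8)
The plan is to run the proof of Proposition~\ref{prop:multiConstrSym} almost verbatim, with the fused triangles replaced by fused $k$-cycles, and then to read off the two orbit counts from the resulting description of the automorphism group. Observe first that the hypothesis ``$G$ bipartite when $k \equiv 1 \pmod 4$'' plays no role in the three identities asserted: it is the condition (inherited, via subdivision, from the generalisation of Proposition~\ref{prop:multiplier5}) that makes $\mathcal{M}_k(G)$ a nut graph, whereas the statements about $o_v$, $o_e$ and $|\Aut|$ hold for every connected $(2t)$-regular $G$ and every odd $k \geq 3$.

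For the automorphism group, the key point is that since $t \geq 1$, in $\mathcal{M}_k(G)$ every vertex of $G$ has degree $4t \geq 4$, while every vertex lying strictly inside a fused $k$-cycle has degree $2$; hence $V(G)$ is exactly the set of vertices of degree greater than $2$, so every automorphism preserves it, and the subgraph it induces is precisely $G$ (a fused cycle meets $V(G)$ only at its attachment vertex, so it contributes no edge inside $V(G)$). Restriction therefore gives a homomorphism $\rho\colon \Aut(\mathcal{M}_k(G)) \to \Aut(G)$, which is surjective because each $\alpha \in \Aut(G)$ extends --- move $V(G)$ by $\alpha$ and carry the $i$-th fused cycle at $v$ onto the $i$-th fused cycle at $v^\alpha$ respecting its cyclic labelling. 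Its kernel is the pointwise stabiliser of $V(G)$; since this fixes every attachment vertex $v$, it sends the bouquet at $v$ to itself, and on that bouquet the only available moves are to permute the $t$ fused $k$-cycles ($t!$ choices) and, independently, to reflect each of them through $v$ ($2^t$ choices), because a $k$-cycle with one distinguished vertex has exactly two automorphisms fixing that vertex. These choices are independent over the $|V(G)|$ attachment vertices, so $|\ker\rho| = (2^t t!)^{|V(G)|}$ and $|\Aut(\mathcal{M}_k(G))| = (2^t t!)^{|V(G)|}\,|\Aut(G)|$, exactly as for $\mathcal{M}_3$.

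The orbit counts are then bookkeeping. Because $\rho$ is onto, two vertices of $V(G)$ (resp.\ two edges of $G$) lie in one $\Aut(\mathcal{M}_k(G))$-orbit if and only if they lie in one $\Aut(G)$-orbit, and no $V(G)$-vertex shares an orbit with an interior cycle vertex and no $G$-edge with a cycle edge, since the endvertex degrees differ; so $V(G)$ contributes exactly $o_v(G)$ vertex orbits and $o_e(G)$ edge orbits. Inside one fused $k$-cycle $v = a_0, a_1, \dots, a_{k-1}$, the reflection through $v$ identifies $a_j$ with $a_{k-j}$ and the edge $\{a_j, a_{j+1}\}$ with $\{a_{k-1-j}, a_{k-j}\}$; as $k$ is odd, this groups the $k-1$ interior vertices into the $\tfrac{k-1}{2}$ classes indexed by distance $1, \dots, \tfrac{k-1}{2}$ to $v$, and the $k$ cycle edges into $\tfrac{k+1}{2}$ classes (the $\tfrac{k-1}{2}$ reflection-pairs together with the single edge fixed by the reflection). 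Permuting the $t$ cycles at $v$ and then applying extensions of $\Aut(G)$ merges these classes exactly $o_v(G)$-fold, and nothing collapses further, because distances to $V(G)$ are preserved by every automorphism and an interior vertex or a cycle edge determines the attachment vertex of its bouquet up to $\Aut(G)$-orbit. Hence the fused cycles contribute $\tfrac{k-1}{2}o_v(G)$ new vertex orbits and $\tfrac{k+1}{2}o_v(G)$ new edge orbits, giving $o_v(\mathcal{M}_k(G)) = o_v(G) + \tfrac{k-1}{2}o_v(G) = \tfrac{k+1}{2}o_v(G)$ and $o_e(\mathcal{M}_k(G)) = o_e(G) + \tfrac{k+1}{2}o_v(G)$.

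The step I would write out most carefully is the determination of $\ker\rho$: that no automorphism fixing $V(G)$ pointwise can move a fused cycle at $v$ to anything outside the bouquet at $v$. This is immediate from the degree and attachment-vertex observations, and for $k = 3$ it is the analogous step in Proposition~\ref{prop:multiConstrSym}, but it is the one place where a sloppy argument could go wrong. As an alternative to the direct proof, one can bootstrap from $\mathcal{M}_3(G)$ when $k \equiv 3 \pmod 4$ and from $\mathcal{M}_5(G)$ (this is where $G$ bipartite is used) when $k \equiv 1 \pmod 4$, by repeatedly applying the subdivision construction of Proposition~\ref{prop:edgeStuff} to the ``far'' edges of the fused cycles, one edge-orbit at a time: each such far edge is reversed by the reflection through its attachment vertex, and one checks as in the second paragraph that the subdivision leaves the automorphism group unchanged, so Proposition~\ref{prop:edgeStuff}(i) applies and each application adds $2$ to $o_v$ and $2$ to $o_e$; one full round over all $o_v(G)$ far-edge orbits lengthens every fused cycle by $4$ and adds $2\,o_v(G)$ to each of $o_v$ and $o_e$, and iterating reaches every admissible $k$. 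I would nonetheless present the direct argument, since it treats all $k$ uniformly and avoids this iteration.
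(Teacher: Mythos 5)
Your proof is correct and takes essentially the approach the paper intends: the paper leaves this proposition as an exercise, stating only that it ``follows the same pattern as the proof of Proposition~\ref{prop:multiConstrSym}'', and your argument is exactly that pattern (extend each $\alpha \in \Aut(G)$, identify the pointwise stabiliser of $V(G)$ as $(2^t t!)^{|V(G)|}$ via permutations and reflections of the bouquets, apply orbit--stabiliser, then read off the orbit counts). You additionally supply the reflection-class bookkeeping inside a $k$-cycle ($\tfrac{k-1}{2}$ interior-vertex classes, $\tfrac{k+1}{2}$ edge classes) that the paper's $k=3$ proof dismisses as ``straightforward'', and your observation that the bipartiteness hypothesis is irrelevant to the three stated identities is accurate.
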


Proof of Proposition~\ref{prop:multiConstrSymGen} follows the same pattern as
the proof of Proposition~\ref{prop:multiConstrSym} and is left as an exercise to the reader.

Finally, Proposition~\ref{prop:multiConstrSymGen} implies some further results on the existence of infinite sets of graphs for given pairs $(o_v, o_e)$.
In Subsections~\ref{subsec:fam12} and~\ref{subsec:fam23} we provided infinite families of graphs
for which $(o_v, o_e) = (1, 2)$ and $(o_v, o_e) = (2, 3)$, respectively.
Using the machinery of multiplier constructions and their effects on symmetry, we obtain the next theorem.

\begin{theorem}
\label{prop:halfUniverseCharted}
Let $r \geq 2$ be even. For every $k \geq r + 1$ there exist infinitely many nut graphs $G$ with $o_v(G) = r$ and $o_e(G) = k$.
\end{theorem}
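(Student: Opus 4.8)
The plan is to realise every target pair $(r,k)$ by applying a single multiplier construction to a well-chosen infinite family of vertex-transitive regular parent graphs. Write $\ell = k-r$; the hypothesis $k \geq r+1$ gives $\ell \geq 1$, and since $r$ is even we have $2r-1 \equiv 3 \pmod 4$. Recall from Proposition~\ref{prop:multiConstrSymGen} (applied with the odd integer $2r-1$ in place of $k$) that a connected $(2\ell)$-regular graph $H$ with $o_v(H)=1$ produces $o_v(\mathcal{M}_{2r-1}(H)) = \tfrac{(2r-1)+1}{2}\,o_v(H) = r$ and $o_e(\mathcal{M}_{2r-1}(H)) = o_e(H) + r$; moreover, because $2r-1 \equiv 3 \pmod 4$ and $2r-1$ is odd (so no bipartiteness hypothesis is triggered), the stated generalisation of Proposition~\ref{prop:multiplier3} to $\mathcal{M}_k$ with $k \equiv 3 \pmod 4$ guarantees that $\mathcal{M}_{2r-1}(H)$ is a nut graph. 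Thus everything reduces to producing, for each $\ell \geq 1$, infinitely many connected vertex-transitive $(2\ell)$-regular graphs $H$ with exactly $\ell$ edge orbits, since then $\mathcal{M}_{2r-1}(H)$ is a nut graph with $(o_v,o_e) = (r,\,r+\ell) = (r,k)$.

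For the parent graphs I would take $H = C_{n_1} \cart C_{n_2} \cart \cdots \cart C_{n_\ell}$, where $n_1,\dots,n_\ell$ are pairwise distinct integers, each at least $5$. Each factor $C_{n_i}$ is then connected, prime with respect to the Cartesian product (the only decomposable cycle being $C_4 = K_2 \cart K_2$), and the factors are pairwise non-isomorphic; hence $H$ is connected and $(2\ell)$-regular, and by the Sabidussi--Vizing decomposition of automorphism groups of Cartesian products (see \cite{klavzar2011book}) one has $\Aut(H) = \Aut(C_{n_1}) \times \cdots \times \Aut(C_{n_\ell})$, each factor acting on its own coordinate and with no permutation of factors. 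In particular $H$ is vertex-transitive, so $o_v(H) = 1$. Since every edge of $H$ changes exactly one coordinate, the edge set partitions into $\ell$ classes by the coordinate affected, and these classes are precisely the edge orbits: within the $i$-th class, $\Aut(C_{n_i})$ acts edge-transitively on $C_{n_i}$ while the remaining coordinates may be translated freely, so the $i$-th class is a single orbit, and no automorphism mixes different classes. Hence $o_e(H) = \ell$, as needed.

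To make the family infinite, fix $n_2 = 6,\ n_3 = 7,\ \dots,\ n_\ell = \ell+4$ and let $n_1$ range over all integers $\geq \ell+5$; this yields infinitely many pairwise non-isomorphic parents $H$ (of distinct orders $n_1 n_2 \cdots n_\ell$), and since $|V(\mathcal{M}_{2r-1}(H))|$ is a fixed multiple of $|V(H)|$, the resulting nut graphs $\mathcal{M}_{2r-1}(H)$ are pairwise non-isomorphic and all satisfy $o_v = r$, $o_e = k$. (For $\ell = 1$ the parent degenerates to a single cycle $C_{n_1}$, and for $r=2$ this recovers $\mathcal{M}_3(C_{n_1}) = \mathcal{T}_{n_1}$ of Proposition~\ref{prop:triangCyc}, a reassuring consistency check.)

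The only step that requires genuine care is the claim $o_e(H) = \ell$: a priori a Cartesian product can admit ``diagonal'' automorphisms that fuse edge classes coming from different factors, and ruling this out is exactly the content of the prime-factorisation theorem for Cartesian products — which is why the factors must be taken pairwise non-isomorphic and prime, forcing the harmless restriction $n_i \geq 5$. The remaining ingredients — that $\mathcal{M}_{2r-1}(H)$ is a nut graph and the exact arithmetic of its vertex- and edge-orbit counts — are supplied directly by (the generalisations of) Propositions~\ref{prop:multiplier3} and~\ref{prop:multiConstrSymGen}, together with the elementary observation that $2r-1 \equiv 3 \pmod 4$ precisely because $r$ is even.
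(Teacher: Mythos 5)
Your proof is correct, and its skeleton is exactly the paper's: reduce the problem to exhibiting, for each $\ell = k-r \geq 1$, infinitely many connected vertex-transitive $(2\ell)$-regular graphs $H$ with $o_v(H)=1$ and $o_e(H)=\ell$, then apply the multiplier $\mathcal{M}_{2r-1}$ (legitimate since $r$ even gives $2r-1 \equiv 3 \pmod 4$) and read off $o_v = r$, $o_e = \ell + r = k$ from Proposition~\ref{prop:multiConstrSymGen}. Where you diverge is in the choice of parent family and the lemma that certifies its edge-orbit count. The paper takes $H = \Circ(n,\{1,2,\dots,\ell\})$ for $n \geq 2\ell+3$ and proves from scratch (Lemma~\ref{lemma:circulantDih}, via a distance/neighbourhood argument) that $\Aut(H) \cong \Dih(n)$, from which $o_e(H)=\ell$ follows because the $\ell$ ``step lengths'' give $\ell$ distinct orbits. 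You instead take $H = C_{n_1}\cart\cdots\cart C_{n_\ell}$ with pairwise distinct $n_i \geq 5$ and invoke the Sabidussi--Vizing prime-factorisation theorem to pin down $\Aut(H)$ as the direct product of the dihedral groups of the factors, so that the $\ell$ coordinate classes of edges are exactly the edge orbits. Both certifications are sound; yours outsources the group computation to a classical theorem (and correctly flags the one genuine danger, namely diagonal automorphisms fusing edge classes, which is precisely what primeness and non-isomorphism of the factors exclude -- your restriction $n_i \geq 5$ safely avoids the sole decomposable cycle $C_4$), while the paper's is self-contained and keeps all parents circulant. Your degenerate case $\ell = 1$ and the non-isomorphism argument via distinct orders are also handled correctly, so nothing is missing.
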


\begin{lemma}
\label{lemma:circulantDih}
For every $k \geq 1$ it holds that $\Aut(\Circ(n, \{1, 2, \ldots, k\})) \cong \Dih(n)$ for all $n \geq 2k + 3$.
\end{lemma}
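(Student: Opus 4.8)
The plan is to realise $\Circ(n, \{1, \ldots, k\})$ as the $k$-th power $C_n^k$ of the $n$-cycle and to pin down its automorphism group through the action on a canonical family of cliques. One inclusion is immediate and holds for all $n$: the rotation $x \mapsto x+1$ and the reflection $x \mapsto -x$ preserve adjacency because the connection set $\{1, \ldots, k\}$ is a symmetric interval of $\mathbb{Z}_n$, so $\Dih(n) \le \Aut(C_n^k)$. All the work is in the reverse inclusion $\Aut(C_n^k) \le \Dih(n)$, and it is here that the hypothesis $n \ge 2k+3$ is used.

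First I would determine the maximum cliques. For each $a \in \mathbb{Z}_n$ the interval $I_a = \{a, a+1, \ldots, a+k\}$ is a clique of size $k+1$, and when $n \ge 2k+3$ it is maximal, since a vertex adjacent to all of $a$ and $a+k$ must lie in the intersection of the two balls of radius $k$ around $a$ and $a+k$, which is exactly $I_a$ (this is where $n \ge 2k+3$ rules out wrap-around). Conversely I claim these are the only maximum cliques and that $\omega(C_n^k) = k+1$. To see this, list the elements of an arbitrary clique $S$ in cyclic order with forward gaps $g_0, \ldots, g_{m-1}$, $\sum_i g_i = n$. Adjacency of consecutive elements forces $\min(g_i, n - g_i) \le k$ for each $i$, and since $n > 2k$ this implies at most one gap exceeds $k$. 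Running through the partial sums $P_0 = 0 < P_1 < \cdots < P_{m-1} < n$ and using that each $P_i$ must satisfy $P_i \le k$ or $P_i \ge n-k$, a short threshold argument shows that as soon as $|S| \ge k+1$ there is exactly one gap equal to $n-k$ while all the remaining gaps equal $1$; hence $|S| = k+1$ and $S$ is an interval. Thus the maximum cliques of $C_n^k$ are precisely the $n$ intervals $I_a$.

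Next I would use the action of $\alpha \in \Aut(C_n^k)$ on $\{I_a : a \in \mathbb{Z}_n\}$. Since $a \mapsto I_a$ is a bijection onto the maximum cliques, $\alpha$ induces a permutation $\sigma$ of $\mathbb{Z}_n$ via $\alpha(I_a) = I_{\sigma(a)}$. A direct count gives $|I_a \cap I_b| = \max\{0, k+1 - d(a,b)\}$, where $d(a,b) = \min(|a-b|, n-|a-b|)$ is the cyclic distance; in particular $|I_a \cap I_b| = k$ exactly when $d(a,b) = 1$. As $\alpha$ preserves intersection cardinalities, $\sigma$ preserves the relation $d(\cdot,\cdot) = 1$, i.e.\ $\sigma \in \Aut(\Circ(n, \{1\})) = \Aut(C_n) = \Dih(n)$ (using $n \ge 5$), so $\sigma(a) = \varepsilon a + c$ for some $\varepsilon \in \{+1,-1\}$ and $c \in \mathbb{Z}_n$. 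Finally I would recover $\alpha$ vertex by vertex: the maximum cliques through a vertex $v$ are $I_{v-k}, I_{v-k+1}, \ldots, I_v$, and their common intersection is the single vertex $v$; applying $\alpha$ gives $\{\alpha(v)\} = \bigcap_{a=v-k}^{v} I_{\sigma(a)}$, and because $\{\sigma(a) : v-k \le a \le v\}$ is again a block of $k+1$ consecutive residues, this intersection is the single vertex $v+c$ if $\varepsilon = +1$ and $-v+c+k$ if $\varepsilon = -1$. Either way $\alpha \in \Dih(n)$, which together with the first paragraph yields $\Aut(\Circ(n, \{1, \ldots, k\})) \cong \Dih(n)$.

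The step I expect to be the main obstacle is the classification of maximum cliques in the dense range $2k+3 \le n \le 3k$ (nonempty once $k \ge 3$), where $C_n^k$ is close to a complete graph and does have small non-interval maximal cliques — for instance $\{0, k, 2k\}$ when $n = 3k$. One must verify that such spread-out cliques never attain the maximum size $k+1$; the partial-sum/threshold argument above is exactly what makes this uniform across all $n \ge 2k+3$, and getting its book-keeping right (locating the unique large gap and forcing the remaining gaps to be $1$) is the delicate part of the proof.
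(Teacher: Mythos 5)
Your strategy is genuinely different from the paper's: the paper fixes a base vertex, uses the orbit--stabiliser lemma, and pins down the stabiliser by counting, for each neighbour $i$ of $0$, how many neighbours of $i$ lie at distance $2$ from $0$; you instead reconstruct the underlying $n$-cycle from the maximum cliques and transport the automorphism to $\Aut(C_n)=\Dih(n)$. The second half of your argument (the intersection pattern $|I_a\cap I_b|=\max\{0,k+1-d(a,b)\}$, the induced $\sigma\in\Dih(n)$, and the recovery of $\alpha$ from $\bigcap_{a=v-k}^{v}I_{\sigma(a)}$) is sound. But the load-bearing combinatorial claim --- that every clique of size at least $k+1$ is an interval when $n\ge 2k+3$ --- is exactly the step you flag as delicate, and the argument you offer for it does not close it. The conditions you invoke, namely that each partial sum $P_i$ satisfies $P_i\le k$ or $P_i\ge n-k$, are the adjacency constraints relative to the \emph{single} base point $s_0$; they are far from characterising cliques. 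For $n=11$, $k=4$, the position set $\{0,1,2,7,8\}$ satisfies every such constraint and has $k+1$ elements, yet is neither a clique nor an interval; and genuine non-interval cliques such as $\{0,4,7,8\}$ in $\Circ(11,\{1,2,3,4\})$ or $\{0,3,6\}$ in $\Circ(9,\{1,2,3\})$ show that in the range $2k+3\le n\le 3k$ there really are spread-out cliques that must be excluded by size, not by a one-base-point threshold. No "short threshold argument" on the $P_i$ alone can conclude that exactly one gap equals $n-k$ and the rest equal $1$. A related slip: the intersection of the two balls of radius $k$ about $a$ and $a+k$ is \emph{not} $I_a$ in the dense range (for $k=3$, $n=9$, vertex $6$ is adjacent to both $0$ and $3$), although this particular claim is not needed once the classification is in hand.

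The claim itself is true and the gap is fixable, but it needs an argument that uses the constraints at \emph{all} base points simultaneously. One way: if the largest gap $g^{*}$ of a clique $S$ satisfies $g^{*}\ge n-k$, then $S$ lies in an arc of length at most $k+1$ and you are done; otherwise every gap is at most $k$, and for each $v\in S$ the arc $\{v+k+1,\dots,v+n-k-1\}$ of $L=n-2k-1\ge 2$ consecutive vertices is disjoint from $S$, hence contained in the interior of a single gap. Writing $m=|S|$ and summing, $m\le\sum_i\max(0,g_i-L)$; combining this with $\sum_i g_i=n$ and $m\ge k+1$ forces the number of gaps exceeding $L$ to be exactly one and that gap to have length at least $n-k$, contradicting $g^{*}\le k$. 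Until something of this kind is supplied, the clique classification --- and with it the whole reverse inclusion $\Aut(\Circ(n,\{1,\dots,k\}))\le\Dih(n)$ --- remains unproved in your write-up.
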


\begin{proof}
Let $G = \Circ(n, \{1, 2, \ldots, k\})$. Recall that $V(G) = \mathbb{Z}_n = \{0, 1, 2, \ldots, n-1\}$.
It is clear that $\Dih(n) \leq \Aut(G)$.

Let $\mathcal{G} \leq \Aut(G)$. The Orbit-Stabiliser Lemma~\cite[Lemma~2.2.2]{Godsil2001} says $|\mathcal{G}| = |\mathcal{G}_v| \cdot |v^\mathcal{G}|$ for
any $v \in V(G)$. Since $G$ is vertex transitive it follows that $|\Aut(G)| = n \cdot |\Aut(G)_0|$. What is the orbit of vertex $1$ inside $\Aut(G)_0$?
Note that $d_G(0, i) = 1$ for $i \in \{1, 2, \ldots, k\} \cup \{-1, -2, \ldots, -k\}$ and $d_G(0, i) = 2$ for $i \in \{k+1, k+2, -k-1, -k-2\}$,
where $d_G(u, v)$ is the distance between vertices $u$ and $v$ in graph $G$. Let us define $f_0(i) = |\{ j \in N_G(i) \mid d(0, j) = 2 \}|$ for $v \in \{1, \ldots, k\} \cup \{ -1, \ldots, -k\}$.
Note that $f_0(1)  = f_0(-1) = 1$ and $f_0(i) \geq 2$ if $i \notin \{-1, 1\}$. Since graph automorphisms preserve distances, it follows that $1^{\Aut(G)_0} = \{-1, 1\}$, as these are the only two vertices at distance $1$
from $0$ that have a single neighbour at distance $2$. Therefore, $|\Aut(G)_0| = 2 \cdot |\Aut(G)_{0,1}|$, where $\Aut(G)_{0,1}$ is the stabiliser that fixes both $0$ and $1$.
It only remains to show that $\Aut(G)_{0,1}$  is trivial. Vertices $\{2, 3, \ldots, k + 1\} \cup \{ 0, -1, -k+1\}$ are at distance $1$ from vertex $1$. For these vertices we define
$f_1(i) = |\{ j \in N_G(i) \mid d(1, j) = 2 \}|$. The only vertex $\ell$ for which $d_G(\ell, 1) = 1$ and $f_1(\ell) = 1$ and $d_G(\ell, 0) = 1$ is the vertex $\ell = 2$. Therefore, 
 $\Aut(G)_{0,1}$ fixes vertex $2$. By iteration of the argument, all vertices are fixed, so $|\Aut(G)_{0,1}| = 1$.
\end{proof}

We remark in passing that $\Circ(2k+1, \{1, 2, \ldots, k\}) \cong K_{2k+1}$ and its automorphism group has order $(2k+1)!$; and that 
$\Circ(2k+2, \{1, 2, \ldots, k\}) \cong K_{2k+2} - (k+1)K_2$ and its automorphism group has order $2^{k+1}(k+1)!$.

\begin{proof}[Proof of Theorem~\ref{prop:halfUniverseCharted}]
For every $k \geq 1$ there exist infinitely many vertex transitive graphs with precisely $k$ edge orbits; 
they include the circulants $\Circ(n, \{1, 2, \ldots, k\})$ for $n \geq 2k+3$, provided by Lemma~\ref{lemma:circulantDih}.
By Proposition~\ref{prop:multiConstrSymGen}, 
\[
o_v(\mathcal{M}_{4q-1}(\Circ(n, \{1, 2, \ldots, k\}))) = 2q \text{ and } o_e(\mathcal{M}_{4q-1}(\Circ(n, \{1, 2, \ldots, k\}))) = k + 2q. \qedhere
\]
\end{proof}

\section{Future work}

The present paper gives a theorem for the relationship between vertex-orbit and edge-orbit counts for nut graphs.
The result (Theorem~\ref{thm:norb}) that
$o_e \geq o_v + 1$, compares to Buset's result $o_e \geq o_v - 1$ for all connected graphs \cite[Theorem 2]{Buset1985}.
Edge-transitive nut graphs are therefore impossible objects.

We also provided a complete characterisation of the orders for which nut graphs with $(o_v, o_e) = (1, 2)$ exist.
A partial answer was also found for the pair $(o_v, o_e) = (2, 3)$ (see Conjecture~\ref{conj:conjecturev2e3} and Question~\ref{quest:quest24}).
It was possible to provide infinite families of nut graphs for the pairs $(o_v, o_e)$, where $o_v$ is an even number and $o_e > e_v$. The case where $o_v$ is an odd number
remains to be completed. The ultimate goal is, of course, the complete characterisation of orders for all $(o_v, o_e)$ pairs.

During this work we encountered 
smallest examples of several interesting classes, including the non-Cayley nut graphs (see Figure~\ref{fig:nonCayleyExamples4}),
and GRR nut graphs (see Figure~\ref{fig:zero_sym_nut}), which suggest directions for future explorations.
The three infinite families used to prove Theorem~\ref{thm:fam1_2}, and the Rose Window family (see Proposition~\ref{prop:roseWindow}) are all quartic, but the problem of characterising orders for $(o_v, o_e)$ pairs is also a natural one for regular graphs,
or graphs of prescribed degree. It is planned to investigate the cubic case first, because of its significance for chemical graph theory.

A substantial part of the paper was devoted to constructions of nut graphs and their effects on
symmetry, which can be complicated. In some cases, the automorphism group of a constructed nut graph
can be impressively large (see Example~\ref{examp:2}). 
The multiplier constructions (Subsection~\ref{sec:multiConstructions})
give access to highly symmetric graphs with controlled number of vertex orbits. 
This prompts the question: For a given $n$, what is the most
symmetric nut graph on that order, where by `most symmetric' we mean in the sense of order of the automorphism group?
From this perspective it is interesting that the graph with $288$ automorphisms shown in Figure~\ref{fig:orbit_constr1}(i) is the 
nut graph with the largest
full automorphism group amongst all nut graphs on $10$ vertices and yet it is not vertex transitive.

\section*{Acknowledgements}
The work of Nino Bašić is supported in part by the Slovenian Research Agency (research program P1-0294
and research projects N1-0140 and J1-2481).
PWF thanks the Leverhulme Trust for an
Emeritus Fellowship on the theme of
{\lq Modelling molecular currents, conduction and aromaticity\rq}.
The work of Toma{\v z} Pisanski is supported in part by the Slovenian Research Agency (research program
P1-0294 and research projects N1-0140 and J1-2481).

\bibliographystyle{amcjoucc}
\bibliography{references}

\end{document}